\newtheorem{theorem}{Theorem}
\newtheorem{proposition}[theorem]{Proposition}
\newtheorem{lemma}[theorem]{Lemma}
\newtheorem{conjecture}[theorem]{Conjecture}
\newtheorem{remark}[theorem]{Remark}
\newtheorem{definition}[theorem]{Definition}
\newtheorem{problem}[theorem]{Problem}
\begin{document}
\title{Quadratic sequences of powers and Mohanty's conjecture}
\author{Natalia Garcia-Fritz}
\address{Department of Mathematics, University of Toronto\newline 
\indent 40 St. George St. BA6103\newline
\indent Toronto, ON Canada, M5S 2E4}
\email{natalia.garciafritz@utoronto.ca}
\thanks{Part of this work was obtained in the author's thesis at Queen's University, and was partially supported by a Becas Chile Scholarship. This work was completed at the University of Toronto, thanks to the support of a postdoctoral fellowship}
\date{\today}
\subjclass[2010]{11D41, 14G05, 11R58}
\keywords{Curves of low genus, quadratic sequences, Mohanty's conjecture, function field arithmetic}

\dedicatory{To the memory of Jerzy Browkin.}

\begin{abstract}
We prove under the Bombieri-Lang conjecture for surfaces that there is an absolute bound on the length of sequences of integer squares with constant second differences, for sequences which are not formed by the squares of integers in arithmetic progression. This answers a question proposed in 2010 by J. Browkin and J. Brzezinski, and independently by E. Gonzalez-Jimenez and X. Xarles.

We also show that under the Bombieri-Lang conjecture for surfaces, for every  $k\geq 3$ there is an absolute bound on the length of sequences formed by $k$-th powers with constant second differences. This gives a conditional result on one of Mohanty's conjectures on arithmetic progressions in Mordell's elliptic curves $y^2=x^3+b$. Moreover, we obtain an unconditional result regarding infinite families of such arithmetic progressions. We also study the case of hyperelliptic curves of the form $y^2=x^k+b$.

These results are proved by unconditionally finding all curves of genus $0$ or $1$ on certain surfaces of general type. Moreover, we prove the unconditional analogues of these arithmetic results for function fields by finding all the curves of low genus on these surfaces.
\end{abstract}

\maketitle

\section{Introduction and main results}
In this paper we study certain arithmetic and geometric problems arising from sequences of powers with constant second differences. Let us recall that a sequence $a_1,\ldots, a_n$ has \emph{constant second differences} if for all $3\leq i\leq n,$ one has
$$(a_i-a_{i-1})-(a_{i-1}-a_{i-2})=D,$$
with $D$ not depending on $i$. Equivalently, if for all $4\leq i\leq n$ the following equality holds 
$$a_i-3a_{i-1}+3a_{i-2}-a_{i-3}=0.$$

%Let $k\geq 2$. We are interested in sequences of $k$-th powers with constant second differences.
Any arithmetic progression $x_i=ai+b$ with $1\leq i\leq n$, satisfies that the sequence of its squares has constant second differences equal to $2a^2$; we call such examples \emph{trivial sequences}. It is a classical problem to study the maximal length $M$ of non-trivial sequences of integer or rational squares. %So for $k=2$, the problem is as follows %, so for $k=2$ we actually want to prove that for large enough $n$ there are only trivial sequences of squares with constant second differences.

A sequence of $M$ rational squares with constant second differences corresponds to a quadratic polynomial $f(x)=ax^2+bx+c\in\mathbb{Q}[x]$ such that $f(x)$ is a square for $M$ consecutive integer values of $x$, and a trivial sequence gives a polynomial which is a square. The proof of these facts is elementary, see the Appendix of \cite{breng}. In fact, the same proof applies if we replace $\mathbb{Q}$ by any field of characteristic zero. %Es un ejercicio verificar que toda secuencia de largo \ge d+1 con d-esimas diferencias constantes, es dada por los valores f(1),...,f(j),.. de un polinomio f de grado a lo mas d. Mas aun, la constante comun es k\ne 0 si y solo si $f$ tiene grado exactamente $d$, en cuyo caso el leading coeff de f es k/d!

In 1986 Allison \cite{All} found infinitely many non-trivial integer sequences of length $M=8$, corresponding to polynomials of the form $a(x^2+x)+c$ evaluated at $-3,-2,\ldots, 3,4$ (these are called \emph{symmetric} polynomials). Subsequent work by Bremner \cite{BR} and by Gonzalez-Jimenez and Xarles \cite{XX} gives a complete description of non-trivial sequences of integer squares with constant second differences in the symmetric case, and it turns out that the maximal length in this case is eight, as achieved by Allison. 

In the case where symmetry is not assumed, Browkin and Brzezinski \cite{BrowkinBrzezinski} proved that there are infinitely many non-trivial sequences of six rational squares with constant second differences, and they conjecture that for $M=8$ the only non-trivial rational sequences are the ones found by Allison (and it is known that they cannot be extended to sequences of length nine). %Thus Browkin and Brzezinski conjecture that Problem \ref{sdprobsq} has positive answer with $n=9$. 
However, even the \emph{existence} of some $M$ as required is an open problem:

%This motivates the following problem:

\begin{problem}\label{sdprobsq}
Does there exist an integer $M$ such that the only sequences of rational numbers $x_1,\ldots,x_M$ whose squares have constant second differences correspond to trivial sequences?
\end{problem}

We can also consider the problem for $k$-th powers with a fixed $k\geq 3$. Note that the sequence 1,1,1,...,1 and its scalar multiples $a^k,a^k,...,a^k$ are \emph{degenerate} sequences of $k$-th powers with constant second differences, and one is of course only interested in the non-degenerate case. We propose the following:

\begin{problem}\label{sdprob1} Let $k\geq 3$.
Does there exist an integer $M$ such that there are no sequences of rational numbers $x_1,\cdots,x_M$ whose $k$-th powers form a non-degenerate sequence with constant second differences? %correspond to trivial sequences? 
\end{problem}
Note that in Problem \ref{sdprob1} we expect no analogue of the trivial sequences from Problem \ref{sdprobsq}.

One can relate Problem \ref{sdprob1} to questions about $y$-arithmetic progressions on Mordell elliptic curves. If we have a $y$-arithmetic progression with common differences equal to $d$ on the Mordell elliptic curve $y^2=x^3+b$, that is %$(y+id)^2=x_i^3+b$ for $0\leq i\leq n$,
\begin{eqnarray*}
y_0^2&=&x_0^3+b\cr
(y_0+d)^2&=&x_1^3+b\cr
&\vdots&\cr
(y_0+nd)^2&=&x_n^3+b,
\end{eqnarray*}
then we have 
\begin{eqnarray}\label{lagatita}
2d^2=x_{i+2}^3-2x_{i+1}^3+x_i^3
\end{eqnarray} for all $0\leq i\leq n-2$. Therefore $y$-arithmetic progressions on a Mordell elliptic curve with common differences equal to $d$ give sequences of cubes with constant second differences equal to $2d^2$.

From this we see that Problem \ref{sdprob1} is closely related to the following conjecture of Mohanty \cite{mohanty} on Mordell elliptic curves: \emph{Let $b$ be an integer different from zero. There does not exist length five or greater $y$-arithmetic progression of $\mathbb{Z}$-integral points on $y^2=x^3+b$.}
%%There does not exist length five or greater $x$-arithmetic progression on $y^2=x^3+b$.
%\end{conjecture}

Actually, Mohanty also conjectures an analogous statement for $x$-arithmetic progressions on Mordell elliptic curves, but we will only deal with $y$-arithmetic progressions in this work.
In 1992, Lee and Velez \cite{LV} found infinitely many $y$-arithmetic progressions of length four, five and six (see also \cite{ALV}). Therefore Mohanty's conjecture is not true for the bound five proposed by him, but we still expect that Mohanty's conjecture holds for some larger bound independent of the Mordell curve: 
\begin{conjecture}[Mohanty's conjecture]\label{mohant} There exists an absolute constant $M>0$ such that for any $b\in\mathbb{Q}^*$, there are no length $M$ or greater $y$-arithmetic progression of $\mathbb{Q}$-rational points on $y^2=x^3+b$.
\end{conjecture}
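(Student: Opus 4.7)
The plan is to reduce Mohanty's conjecture to Problem \ref{sdprob1} in the case $k=3$, which is addressed (conditionally on the Bombieri-Lang conjecture for surfaces) elsewhere in this paper. Equation (\ref{lagatita}) already does most of the work: a $y$-arithmetic progression of length $n+1$ on $y^2=x^3+b$ with common difference $d$ produces a sequence $x_0^3,x_1^3,\ldots,x_n^3$ of rational cubes with constant second differences $2d^2$. If $d=0$ all the points coincide, which is not an arithmetic progression; if $d\neq 0$ the resulting sequence of cubes is non-degenerate. Crucially, the parameter $b$ is absorbed entirely into the initial term $x_0^3+b=y_0^2$ and plays no role in the sequence of cubes itself, so any absolute bound coming from Problem \ref{sdprob1} yields an absolute constant $M$ in Conjecture \ref{mohant} that does not depend on $b$.

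Everything therefore turns on Problem \ref{sdprob1} for $k=3$. Geometrically, a non-degenerate length-$M$ sequence corresponds to a $\Q$-rational point on the affine variety $X_M\subset\A^M$ cut out by
\[
x_{i+3}^3-3x_{i+2}^3+3x_{i+1}^3-x_i^3=0 \qquad (1\leq i\leq M-3).
\]
For large enough $M$ one expects $X_M$ (or a suitable projection of it) to be of general type; the plan is to reduce to two-dimensional slices, pass to a smooth projective model, and apply the Bombieri-Lang conjecture. This would confine all but finitely many $\Q$-rational points to a proper closed subvariety, i.e.\ to a finite union of curves of genus $0$ or $1$. In line with the strategy announced in the abstract, one then classifies these low-genus curves on the relevant surface explicitly and verifies that each of them gives rise only to degenerate sequences of cubes, i.e.\ to $d=0$.

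The main obstacle is precisely this explicit geometric classification. It requires computing the canonical class on an appropriate compactification, verifying that the surface is of general type for some fixed $M$, and then locating every rational and elliptic curve on it. This is a concrete but delicate problem, since the surfaces in question carry a visible symmetry (permutation and translation of the indices, as well as the scaling of all $x_i$) which produces candidate low-genus curves that must be shown to be precisely the degenerate locus. Once this classification is carried out for some fixed $M$, the reduction above delivers Mohanty's conjecture immediately, and, as the abstract indicates, the same analysis yields the unconditional function-field analogue by classifying curves of low genus on the same surfaces.
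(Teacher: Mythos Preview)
First, note that the statement you are addressing is Conjecture~\ref{mohant}, which the paper does \emph{not} prove; it remains open. What the paper establishes is the conditional Theorem~\ref{moha}, which asserts Mohanty's conjecture under the Bombieri-Lang conjecture for the surfaces $X_{n,3}$. Your proposal is, in substance, a sketch of the paper's route to that conditional result, and as such it follows the same line: reduce via equation~\eqref{lagatita} to Problem~\ref{sdprob1} for $k=3$, interpret length-$n$ sequences as rational points on the projective surface $X_{n,3}$, invoke Bombieri-Lang once $X_{n,3}$ is shown to be of general type, and classify the curves of genus $0$ or $1$ on $X_{n,3}$.

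Two points deserve comment. First, the paper's classification (Theorem~\ref{main2}) shows that for $k\ge 3$ and $n>\frac{5}{k-1}+5$ there are \emph{no} curves of genus $0$ or $1$ on $X_{n,k}$ at all; so your anticipated step of ``verifying that each low-genus curve gives rise only to degenerate sequences'' is vacuous for cubes---there is nothing to check. Second, Bombieri-Lang together with this emptiness only yields \emph{finiteness} (up to scaling) of non-degenerate sequences of a fixed length $n$; this is the first assertion of Theorem~\ref{hey}. To obtain an $M$ for which there are \emph{no} such sequences of length $M$, as Mohanty's conjecture and Problem~\ref{sdprob1} require, one still needs the combinatorial shifting argument carried out in the proof of Theorem~\ref{miain} and reused for Theorem~\ref{hey}. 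Your sentence ``the reduction above delivers Mohanty's conjecture immediately'' skips this step.
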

Ulas \cite{ULAS} worked on the analogue of Mohanty's conjecture for hyperelliptic curves of the form $y^2=x^k+b$, in the aspect of $x$-arithmetic progressions.

It is important to note that to consider all $y$-arithmetic progressions on all Mordell curves at the same time, we need to find a bound for the length of sequences of cubes with constant second differences which does not depend on the common differences $d$ of the $y$-arithmetic progression, that is, a bound on sequences of cubes with constant second differences not depending on $2d^2$, see Equation \eqref{lagatita}. % and $b$. %QQQQQIMPORTANTE

Problem \ref{sdprobsq}, Problem \ref{sdprob1} and Conjecture \ref{mohant} remain open and much of the literature around them is focused on constructing examples of long sequences rather than showing the expected bounds on the length. The purpose of this paper is to prove some results in the latter direction, and in fact, we give affirmative answers under the Bombieri-Lang conjecture\footnote{Strictly speaking, we should call it \emph{Bombieri's question}, since we only require it for surfaces. See Conjecture \ref{bombieri} and the comments after it for details.} for surfaces, as well as some unconditional results for the function field analogues.

On the other hand, if instead we consider sequences of powers with \emph{given} second differences, then there are several results in the literature in the case of second differences equal to two. 

The particular case of Problem \ref{sdprob1} when we require the second differences to be equal to two, was studied in the author's thesis \cite{thesis}, and in \cite{subm}. However, such a result is not enough to reach the applications obtained in the present paper; for example, it does not have consequences for Mohanty's conjecture, although it has some other applications of independent interest related to sharpness of bounds, cf.\ \cite{subm}.

The particular case of Problem \ref{sdprobsq} concerning sequences of squares with second differences equal to two is called \emph{B\"uchi's Problem}, which has received special attention due to its applications in Logic related to Hilbert's Tenth Problem (see \cite{lips}, \cite{MAZUR}). Such sequences correspond to \emph{monic} quadratic polynomials, and trivial sequences are formed by the squares of consecutive numbers. Hensley \cite{Hensley} found non-trivial sequences of four integer squares with second differences equal to two, but no example has been found for $M=5$ in the integers. In the case of rational numbers, Lipman (see \cite{MAZUR}) showed that there are infinitely many non-trivial sequences of length $M=5$.

B\"uchi's problem was solved positively in 2000 under the Bombieri-Lang conjecture for surfaces by Vojta \cite{V}, inspired by the work of Bogomolov (cf.\ \cite{Bog}, \cite{B}). Vojta did this by finding all the curves of genus zero or one on the surfaces defined by the equations associated to B\"uchi's problem. % Let us recall the conjecture QQQQNECESARIO?:

%\begin{conjecture}[Bombieri-Lang]
%Let $X$ be a smooth projective surface of general type defined over a number field $K$. We have that $X(K)$ is not Zariski dense in $X$.
%\end{conjecture}

We will generalize and use Vojta's method from \cite{V} to find all curves of genus zero or one on the surfaces defined by the equations associated to Problem \ref{sdprobsq} and Problem \ref{sdprob1}. From this we obtain arithmetic results under the Bombieri-Lang conjecture on the length of nontrivial sequences of $k$-th integer powers with constant second differences, and we prove unconditional analogues for function fields. Finally, we give a conditional result towards Mohanty's conjecture, and an unconditional result on parametric families of $y$-arithmetic progressions on Mordell elliptic curves.

%The geometric method used here to get results towards Problem \ref{sdprobsq} and Problem \ref{sdprob1} originates in work of Vojta \cite{V}, and was spelled out and refined in \cite{subm} (see also Chapter 3 in \cite{thesis}). 
%While the general geometric approach in the present paper is analogous to that of \cite{subm}, the current problem offers its own quirks, leading to different bounds, applications and difficulties.

Our method originates in work of \cite{V}, but one needs a more systematic approach in order to make it work for Problems \ref{sdprobsq} and \ref{sdprob1}. Such a systematic approach was spelled-out and refined in Chapter 3 of the author's thesis, and it was applied in \cite{subm} for sequences of powers with second differences equal to two; we recall this method in Section \ref{explanation}. The problems addressed in this paper, however, lead to some additional complications (for instance, the relevant symmetric differential is more involved) so that when our results are specialized to the context of second differences equal to two, we deduce weaker bounds than in \cite{V} and \cite{subm} (in particular, certain examples of Brody-hyperbolic surfaces of low degree obtained in \cite{subm} do not follow from the present paper). This is not surprising; for instance, at present there are no known sequences of six rational squares with second differences equal to two, while Allison found parametric families of infinitely many sequences of eight integer squares with constant second differences.

Since the geometric part of this paper uses the method discussed in the previous paragraph, some of the routinary computations (such as verifying irreducibility and smoothness of varieties, or computing the genus of some curves) will be similar to the analogous steps in the applications considered in \cite{V} and \cite{subm}. However, the varieties considered in this work are different, and therefore we have included the computations when necessary.

The problem of $k$-th powers with constant second differences (not necessarily equal to two, and not fixing the common value of the second differences) is related to the following projective surfaces
\begin{equation}\label{xnequation} X_{n,k}:\left\{\begin{array}{rl}
x_1^k-3x_2^k+3x_3^k &=x_4^k\cr
&\vdots\cr
x_{n-3}^k-3x_{n-2}^k+3x_{n-1}^k&=x_{n}^k,\end{array}\right.\end{equation}
which in the case $k=2$ were defined by Browkin and Brzezinski \cite{BB}.
For each field $K$ of characteristic zero, we can see that a $K$-rational point on the projective surface $X_{n,k}$ corresponds, up to scaling, to a sequence of length $n$ formed by $k$-th powers in $K$ with constant second differences.

Gonzalez-Jimenez and Xarles \cite{XX} speculated that Vojta's method \cite{V} might be adapted to be applied on these surfaces (with $k=2$), to obtain a conditional solution of Problem \ref{sdprobsq} under the Bombieri-Lang conjecture. We will show in the present paper that this is in fact possible. 
%For each $n$, the surface $X_{n,k}\subset\mathbb{P}^{n-1}$ is defined over $\mathbb{Q}$. By convention $X_{3,k}=\mathbb{P}^2$. 

From now on, by \emph{genus} we mean the \emph{geometric genus} of a curve. Our main result is the following:
%We use Vojta's technique from \cite{V} to prove the following:

\begin{theorem}\label{main}
Let $g\geq 0$ be an integer. 
\begin{itemize}
\item If $k\geq 3$ and $n>\frac{4\max\{g,1\}+1}{k-1}+5$, then there are no curves $C\subseteq X_{n,k}$ of genus $g(C)\leq g$. 
\item If $k=2$ and $n> \max\{10,4g+6\}$, the only curves of genus at most $g$ on $X_{n,k}$ are the lines $[s+t:\pm(2s+t):\cdots :\pm(ns+t)]$ with $[s:t]\in\mathbb{P}^1$.\end{itemize}
\end{theorem}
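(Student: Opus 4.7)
The plan is to adapt the strategy that Vojta developed for B\"uchi's problem in \cite{V}, as systematized in the author's thesis and in \cite{subm}: construct a large family of global symmetric differentials on $X_{n,k}$ and restrict them to any hypothetical low-genus curve $C\subset X_{n,k}$, obtaining via the classical degree inequality for symmetric differentials on a smooth projective curve a numerical bound that is violated once $n$ is too large.

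Concretely, I would begin by analysing the ``window surfaces'' obtained by projecting $X_{n,k}$ onto blocks of consecutive coordinates. Differentiating each defining equation yields the identity
\[
  x_j^{k-1}\,dx_j-3x_{j+1}^{k-1}\,dx_{j+1}+3x_{j+2}^{k-1}\,dx_{j+2}-x_{j+3}^{k-1}\,dx_{j+3}=0
\]
in $\Omega^1_{X_{n,k}}$; combining several consecutive such identities and eliminating variables (and passing to symmetric products where necessary) should produce, for each admissible window index $i$, a nonzero global symmetric differential $\omega_i$ on the corresponding window surface with zero divisor along coordinate hyperplanes of explicit multiplicity controlled by the exponent $k-1$. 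Pulling back to $X_{n,k}$ gives on the order of $n-5$ such forms; a routine but careful verification --- parallel to the smoothness, irreducibility and nonvanishing checks of \cite{V} and \cite{subm} --- is then required to pin down their zero divisors.

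The payoff is a degree count on the normalization $\pi:\tilde C\to C$ of an irreducible curve $C\subset X_{n,k}$ of geometric genus $g$. Whenever $\pi^*\omega_i\neq 0$, the degree bound for a nonzero section of $\mathrm{Sym}^d\Omega^1_{\tilde C}$ gives an inequality of the shape $d(2g-2)\ge(\text{intersection correction weighted by }k-1)$. Summing over the $n-5$ windows and performing careful intersection-theoretic bookkeeping on $X_{n,k}$ should collapse to $(n-5)(k-1)\le 4\max\{g,1\}+1$, exactly the stated bound for $k\ge 3$. For $k=2$ the common base locus of the $\omega_i$'s contains the trivial lines $[s+t:\pm(2s+t):\cdots:\pm(ns+t)]$, and one must show separately that every other irreducible low-genus curve $C\subset X_{n,2}$ still produces enough nonzero restrictions $\pi^*\omega_i$ for the same count to yield $n>\max\{10,4g+6\}$.

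The main obstacle I anticipate is the explicit construction of the forms $\omega_i$ and the precise computation of their zero divisors along the coordinate hyperplanes, where the factors $x_j^{k-1}$ make the local analysis delicate and, as the paper itself emphasises, the relevant symmetric differentials are more involved than in the case of second differences equal to two treated in \cite{subm}. Showing that every nontrivial low-genus curve has enough nonzero restrictions to trigger the bound --- and for $k=2$ that no component other than the trivial lines contaminates the argument --- is the other step at which the systematic framework from the author's thesis is indispensable.
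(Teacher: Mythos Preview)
What you have written is a plan, not a proof, and the plan diverges substantially from what the paper actually does. The paper does \emph{not} construct a family of $n-5$ symmetric differentials coming from ``windows'' of consecutive coordinates and then sum inequalities. Instead it builds a \emph{single} symmetric differential $\omega\in H^0(\mathbb{P}^2,\mathcal{O}(2k+3)\otimes S^2\Omega^1_{\mathbb{P}^2})$ on the base $X_{3,k}=\mathbb{P}^2$, determines the complete list of $\omega$-integral curves there (five explicit families, including the branch curves $C_m$ of the projection $\rho_{n,k}:X_{n,k}\to\mathbb{P}^2$), pulls back to $\omega_n=\rho_{n,k}^\bullet\omega$, and then uses the fact that all the branch curves are $\omega$-integral to twist: $\omega_n$ vanishes to order $k-1$ along each of the $n-3$ ramification divisors $R_{n,i}$, producing a single section $\omega_n'\in H^0(X_{n,k},\mathcal{O}(n(1-k)+5k)\otimes S^2\Omega^1_{X_{n,k}})$. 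The numerics $(n-5)(k-1)>4\max\{g,1\}+1$ then come from one negativity computation for $\deg_{\tilde C}\varphi_C^*\mathcal{O}(n(1-k)+5k)\otimes S^2\Omega^1_{\tilde C}$, not from summing over windows. After that, the conclusion is obtained by computing the genus of every curve on the finite list of $\omega_n$-integral types and checking each exceeds $g$ (except the trivial lines when $k=2$).

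The structural gap in your outline is precisely the endgame. Your scheme ``if $\pi^*\omega_i\neq 0$ then degree bound, sum over $i$'' only yields information when enough restrictions are nonzero; you still have to deal with curves on which \emph{all} your $\omega_i$ vanish, and for that you need a classification of the common integral locus --- which is exactly the work the paper does for its single $\omega$. Without that classification you cannot conclude nonexistence for $k\ge3$, nor identify the exceptional lines for $k=2$. Moreover, your proposed construction of the $\omega_i$ (``combining several consecutive such identities and eliminating variables \ldots\ should produce'') is left entirely hypothetical; the paper's $\omega$ is written down explicitly on $U_3$ and its integral curves are found by hand, and this explicit form is what makes the whole argument go through.
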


Note that we do not require the curves to be smooth.

Theorem \ref{main} specializes in the case $g\leq 1$ as follows:

\begin{theorem}\label{main2}
\begin{itemize}
\item If $k\geq 3$ and $n> \frac{5}{k-1}+5$, then there are no curves of genus $0$ or $1$ on $X_{n,k}$. \item If $k=2$ and $n\geq 11$, then the only curves of genus $0$ or $1$ on $X_{n,k}$ are the lines $[s+t:\pm(2s+t):\cdots :\pm(ns+t)]$ with $[s:t]\in\mathbb{P}^1$.\end{itemize}
\end{theorem}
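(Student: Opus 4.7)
The plan is immediate: Theorem \ref{main2} is a direct specialization of Theorem \ref{main} obtained by setting $g=1$ and unpacking the resulting numerical bounds; no further geometric or arithmetic work is required beyond Theorem \ref{main} itself.

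In the case $k\geq 3$, taking $g=1$ in Theorem \ref{main} gives $\max\{g,1\}=1$, so the hypothesis becomes $n>\frac{4\cdot 1+1}{k-1}+5=\frac{5}{k-1}+5$, matching the first bullet of Theorem \ref{main2}; the conclusion that there are no curves $C\subseteq X_{n,k}$ with $g(C)\leq 1$ rules out curves of genus $0$ or $1$ a fortiori. In the case $k=2$, taking $g=1$ gives $\max\{10,4g+6\}=\max\{10,10\}=10$, so the hypothesis $n>10$ --- that is, $n\geq 11$ since $n$ is an integer --- matches the second bullet; Theorem \ref{main} then asserts that the only curves of genus at most $1$ on $X_{n,2}$ are the lines $[s+t:\pm(2s+t):\cdots:\pm(ns+t)]$ with $[s:t]\in\mathbb{P}^1$, and since each such line is the image of a linear parametrization of $\mathbb{P}^1$ it has genus $0$, thereby exhausting all curves of genus $0$ or $1$.

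The main obstacle therefore lies not in Theorem \ref{main2} but in Theorem \ref{main} itself, which is proved elsewhere in the paper via the Vojta/Bogomolov symmetric-differential approach on a desingularization of $X_{n,k}$. The only additional fact one might want to record in passing is that the listed parametric lines really do lie on $X_{n,2}$, but this is elementary: the squares of the arithmetic progression $is+t$ (and of its signed variants) satisfy $a_i-3a_{i-1}+3a_{i-2}-a_{i-3}=0$ by direct expansion, as already observed in the discussion preceding Problem \ref{sdprobsq}.
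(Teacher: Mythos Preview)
Your proposal is correct and follows exactly the paper's own approach: both proofs simply specialize Theorem~\ref{main} to $g=1$ and read off the resulting numerical bounds. One small inaccuracy in your aside: the surfaces $X_{n,k}$ are already smooth (Lemma~\ref{sddsmooth}), so no desingularization is needed in the proof of Theorem~\ref{main}; otherwise the argument is fine.
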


%Hence we are able to find \emph{all} rational or elliptic curves on $X_{n,k}$ for $n\geq \frac{5}{k-1}+5$.
Note that a point $[x_1:\cdots:x_n]$ lying on a curve $[s+t:\pm(2s+t):\cdots:\pm(ns+t)]$ satisfies that $x_i^2=(is+t)^2$, that is, it gives rise to a trivial sequence. %sequence of squares of elements in arithmetic progression.

Using standard techniques of Nevanlinna theory (as in \cite{V}), from Theorem \ref{main2} and the proof of Theorem \ref{main} it is obtained a result regarding Brody-hyperbolicity of these surfaces.

\begin{theorem}\label{hyper}
If $k\geq 3$ and $n> \frac{5}{k-1}+5$, then $X_{n,k}$ is Brody-hyperbolic.
\end{theorem}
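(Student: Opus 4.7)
The plan is to follow Vojta's strategy from \cite{V}: convert the algebraic hyperbolicity of $X_{n,k}$ (provided by Theorem \ref{main2}) into Brody-hyperbolicity via the Nevanlinna theory of symmetric differentials. The key input will be isolated from the proof of Theorem \ref{main}, which does more than just bound curves of low genus: it produces nontrivial symmetric differentials on a smooth projective model $\tilde X$ of $X_{n,k}$ whose pole divisors are supported on a controlled boundary divisor. I would first restate the construction as a lemma asserting that, under the hypothesis $k\geq 3$ and $n>\frac{5}{k-1}+5$, there exists some $m\geq 1$ and a nonzero global section $\sigma\in H^0(\tilde X,\mathrm{Sym}^m\Omega^1_{\tilde X}(-A))$ for an ample divisor $A$ on $\tilde X$.

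Given such a $\sigma$, assume for contradiction that $f\colon \mathbb{C}\to X_{n,k}$ is a non-constant holomorphic map. Lifting $f$ through the resolution $\pi\colon \tilde X\to X_{n,k}$ yields $\tilde f\colon \mathbb{C}\to\tilde X$, which is still non-constant (if $f(\mathbb{C})$ lay in the indeterminacy set of $\pi^{-1}$, then $f$ would be constant). Vojta's tautological inequality for symmetric differentials (cf.\ \cite{V}, Theorem 3.1) then forces $\tilde f^*\sigma\equiv 0$, since otherwise $T_{\tilde f,A}(r)=o(T_{\tilde f}(r))$, contradicting the ampleness of $A$. This is a nontrivial algebraic condition on $\tilde f$, and by applying it to enough independent differentials coming from the construction in the proof of Theorem \ref{main}, the image $\tilde f(\mathbb{C})$ is forced to lie in a proper algebraic subset of $\tilde X$, hence in some irreducible curve $C\subseteq\tilde X$.

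The normalization of $C$ receives a non-constant map from $\mathbb{C}$, so its geometric genus is at most $1$. If $\pi$ contracts $C$ to a point, then $f=\pi\circ\tilde f$ is constant, contradicting our assumption; otherwise $\pi(C)\subseteq X_{n,k}$ is a curve of geometric genus at most $1$, which is ruled out by the first bullet of Theorem \ref{main2} under our hypotheses. Either way we reach a contradiction, so $X_{n,k}$ is Brody-hyperbolic.

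The main obstacle I anticipate is not the Nevanlinna-theoretic step, which is standard once the differential is in hand, but the bookkeeping required to extract from the proof of Theorem \ref{main} the statement that the relevant symmetric differentials genuinely descend to sections of $\mathrm{Sym}^m\Omega^1_{\tilde X}(-A)$ for an ample $A$ on an explicit smooth model. Concretely, the vanishing orders that are used to run Riemann--Hurwitz on a would-be curve of low genus must be repackaged as positivity of a pole divisor on $\tilde X$, and one must verify that the various independent differentials produced by the construction are enough to cut out a curve rather than merely a divisor. Once this is done for the slightly stronger range $n>5/(k-1)+5$ appearing in Theorem \ref{main2}, the Brody-hyperbolicity conclusion follows.
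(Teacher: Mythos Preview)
Your approach is the paper's approach: the proof in the paper is literally ``Similar to Theorem 6.1 in \cite{V}'', i.e.\ exactly the Vojta--Nevanlinna argument you outline. So the strategy is correct.

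That said, the obstacles you anticipate are not there, and two of your steps are unnecessary detours. First, no resolution is needed: by Proposition~\ref{sdgentype} the surface $X_{n,k}$ is already smooth and projective, so you can work on $X_{n,k}$ itself. Second, you do not need ``enough independent differentials'' to cut the image down to a curve. A \emph{single} differential suffices, and it is already packaged for you: Lemma~\ref{losprima} produces
\[
\omega_n'\in H^0\!\left(X_{n,k},\ \mathcal{O}_{X_{n,k}}\bigl(n(1-k)+5k\bigr)\otimes S^2\Omega^1_{X_{n,k}}\right),
\]
and the hypothesis $n>\frac{5}{k-1}+5$ is exactly the condition $n(1-k)+5k<0$, so the twist is $\mathcal{O}(-A)$ with $A$ ample. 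The Nevanlinna step then gives $f^\bullet\omega_n'=0$ for any nonconstant holomorphic $f:\mathbb{C}\to X_{n,k}$; this already says the Zariski closure of $f(\mathbb{C})$ is an $\omega_n'$-integral curve, hence (Lemma~\ref{losprima}) an $\omega_n$-integral curve, hence one of the curves in Lemma~\ref{sdomegaxn}. For $k\ge 3$ all of those have genus at least $2$, contradicting the existence of a nonconstant map from $\mathbb{C}$. No bookkeeping about pole divisors on a blown-up model, and no family of differentials, is required.
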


Theorem \ref{main} also gives us a result in the arithmetic of function fields.

\begin{theorem}\label{sdffield}
Let $K$ be a function field of genus $g$ with constant field $\mathbb{C}$, and let $n>\frac{4\max\{g,1\}+1}{k-1}+5$. Let $f_1,\ldots,f_n\in K$ be such that the $k$-th powers of this sequence have second differences equal to a fixed $f\in K$. \begin{itemize} \item If $k\geq 3$, then the sequence $(f_1,\ldots, f_n)$ is proportional to a sequence of complex numbers. \item If $k=2$, then the sequence $(f_1,\ldots,f_n)$ is either proportional to a sequence of complex numbers, or it is of the form $f_j=\epsilon_j(aj+b)$ for some $a,b\in K$ and $\epsilon_j\in\left\{-1,1\right\}$.\end{itemize}
\end{theorem}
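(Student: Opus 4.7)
The plan is to reduce the statement to the geometric classification already supplied by Theorem \ref{main}. Let $C$ denote the smooth projective complex curve with function field $K$, so $g(C)=g$. Assuming the sequence is not identically zero (otherwise the conclusion is trivial), the hypothesis on constant second differences puts $[f_1:\cdots:f_n]$ in $X_{n,k}(K)$, and this corresponds to a morphism $\phi:C\to X_{n,k}$.

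The next step is to analyze the image $Y:=\phi(C)$, which is either a closed point or an irreducible curve on $X_{n,k}$. In the former case, $Y$ is automatically a $\C$-rational point $[c_1:\cdots:c_n]$ (since $X_{n,k}$ is a $\C$-variety), so there exists $\lambda\in K^*$ with $f_i=\lambda c_i$, giving the ``proportional to complex numbers'' conclusion. In the latter case, a standard Riemann--Hurwitz argument (lifting $C\to Y$ to $C\to\widetilde Y$ via the universal property of normalization) shows that the normalization of $Y$ has geometric genus at most $g$. The hypothesis $n>\frac{4\max\{g,1\}+1}{k-1}+5$ together with Theorem \ref{main} then rules out this case entirely when $k\ge 3$, while for $k=2$ it forces $Y$ to coincide with one of the lines $[s+t:\pm(2s+t):\cdots:\pm(ns+t)]$.

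To complete the $k=2$ case, I would pull back the projective parametrization of the relevant line by the composition $C\to Y\cong\Pro^1$ to obtain $s,t\in K$ and a scalar $\lambda\in K^*$ such that $f_j=\lambda\epsilon_j(js+t)$ for a fixed sign pattern $(\epsilon_j)\in\{-1,+1\}^n$ (where we may take $\epsilon_1=1$). Setting $a:=\lambda s$ and $b:=\lambda t$ then yields $f_j=\epsilon_j(aj+b)$. Essentially all of the geometric content is absorbed into Theorem \ref{main}; the only verification needed here is the Riemann--Hurwitz estimate on the image curve and the unwinding of the parametrization of the trivial lines, neither of which should pose any real difficulty.
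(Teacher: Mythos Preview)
Your proposal is correct and follows essentially the same route as the paper: translate the $K$-rational point on $X_{n,k}$ into a morphism from the curve $C$ with function field $K$, use Riemann--Hurwitz to bound the genus of the image curve, and then invoke Theorem~\ref{main}. Your write-up is in fact slightly more explicit than the paper's in unwinding the $k=2$ case to recover the form $f_j=\epsilon_j(aj+b)$, but the underlying argument is the same.
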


Browkin and Brzezinski \cite{BB} observed that if one is able to prove a result like Theorem \ref{main2}, then we get the following arithmetic consequence (which we prove in Section \ref{sdproofmain}) which gives a conditional solution to Problem \ref{sdprobsq}:

\begin{theorem}\label{miain}
Assume the Bombieri-Lang conjecture for $\mathbb{Q}$-rational points on the surface $X_{11,2}$. Then there are (up to scaling) finitely many sequences of $11$ rational numbers whose squares have constant second differences, but which are not in arithmetic progression (up to sign). Moreover, there exists an $M>0$ such that if $x_1,\ldots,x_M$ is a sequence of rational squares having constant second differences, then the sequence is trivial.
\end{theorem}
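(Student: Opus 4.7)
The plan is to deduce the theorem from three packaged ingredients: the geometric content of Theorem \ref{main2} applied to $X_{11,2}$, Faltings's theorem, and the classical elementary fact that a polynomial in $\mathbb{Q}[x]$ taking square values at all positive integers must itself be the square of a polynomial in $\mathbb{Q}[x]$.

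For the first assertion I would argue as follows. Since the Bombieri-Lang conjecture is assumed for $X_{11,2}$, the set $X_{11,2}(\mathbb{Q})$ is not Zariski-dense in $X_{11,2}$, so it is contained in a finite union of irreducible curves $C_1,\ldots,C_r \subseteq X_{11,2}$. Each $C_i$ of geometric genus at most $1$ must, by Theorem \ref{main2} (applied with $n=11$, $k=2$), be one of the lines $[s+t:\pm(2s+t):\cdots:\pm(11s+t)]$, whose rational points parametrize precisely the trivial sequences (arithmetic progressions up to sign). The remaining $C_i$ have genus $\geq 2$, and by Faltings's theorem each such component contributes only finitely many $\mathbb{Q}$-rational points. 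Combining these two facts, the non-trivial $\mathbb{Q}$-rational points of $X_{11,2}$ form a finite set, which under the scaling equivalence $[x_1:\cdots:x_{11}]$ translates into finitely many non-trivial length-$11$ sequences up to scaling.

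For the existence of $M$ I would argue by contradiction. Assume that for every $N$ there exists a non-trivial length-$N$ sequence of rational squares with constant second differences. Truncating to the first $11$ terms produces a non-trivial length-$11$ sequence, and by the previous paragraph these fall into only finitely many scaling classes. A pigeonhole argument then forces at least one such length-$11$ seed to admit extensions of unbounded length. Since three values determine a quadratic, a length-$11$ sequence $(x_1,\ldots,x_{11})$ with $x_i^2=f(i)$ uniquely determines a polynomial $f\in\mathbb{Q}[x]$ of degree at most $2$, and an extension to length $N$ demands that $f(12),\ldots,f(N)$ be rational squares. Extensions of unbounded length therefore yield a non-square polynomial $f\in\mathbb{Q}[x]$ with $f(n)$ a rational square for every positive integer $n$. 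Invoking the classical fact recalled above (cf.\ the Appendix of \cite{breng}), $f$ must be the square of a polynomial, contradicting the non-triviality of the starting sequence and producing the required $M$.

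I expect the only delicate point to be the bookkeeping in the extension/pigeonhole step: one has to carefully align the equivalence ``$\mathbb{Q}$-point of $X_{11,2}$ up to scaling'' with ``sequence up to scaling and up to sign'' so that the finite set produced in Step~1 genuinely bounds the length-$11$ seeds that can extend. Everything else is a straightforward packaging of Bombieri-Lang, Faltings, and the classical polynomial fact, with no delicate geometry beyond what Theorem \ref{main2} already supplies.
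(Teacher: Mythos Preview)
Your argument for the first assertion is essentially the paper's: Bombieri--Lang confines $X_{11,2}(\mathbb{Q})$ to a finite union of curves, Theorem~\ref{main2} identifies those of genus $\le 1$ with the trivial lines, and Faltings disposes of the higher-genus components.

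For the second assertion your argument is correct but takes a genuinely different route. You truncate each hypothetical long sequence to its first $11$ terms, pigeonhole over the finitely many non-trivial length-$11$ classes to extract a single non-square quadratic $f\in\mathbb{Q}[t]$ with $f(n)$ a rational square for every $n\ge 1$, and then appeal to the classical fact that such an $f$ must be a perfect square. The paper instead runs a sliding-window argument inside a \emph{single} putative long sequence: from a non-trivial sequence of length $N+12$ it extracts $N+1$ overlapping length-$12$ windows, checks that each is non-trivial (otherwise the whole sequence would lie on a type-(d) line), shows via the injective map $t\mapsto [f(t):f(t+1):f(t+3)]$ that no two windows are proportional, and contradicts the finiteness of non-trivial length-$12$ classes. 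The paper's version is self-contained and yields the explicit bound $M=N+12$ depending only on the \emph{number} $N$ of exceptional classes; your version is more conceptual but imports an external result, and the $M$ it produces depends on where each exceptional polynomial $f_j$ first fails to take a square value, not merely on how many there are. One minor point: the Appendix of \cite{breng}, as cited in this paper, records the correspondence between sequences and quadratic polynomials rather than the square-values-force-square-polynomial fact you invoke; that fact is indeed classical and elementary for degree $\le 2$, but you should either give the short proof or a correct reference.
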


Of course, an analogous result holds for number fields, provided that we assume the Bombieri-Lang conjecture for surfaces over the corresponding number field.

Theorem \ref{miain} formulated in terms of square values of polynomials is as follows:

\begin{theorem}
Assume the Bombieri-Lang conjecture for $\mathbb{Q}$-rational points on the surface $X_{11,2}$. Then, up to scaling by a rational square, there are only finitely many quadratic polynomials $P(t)\in \mathbb{Q}[t]$ which are not the square of a polynomial, and satisfying that the values $P(1),P(2),\ldots,P(11)$ are all squares.
Moreover, there exists an integer $M>0$ such that if $P(t)\in \mathbb{Q}[t]$ is a quadratic polynomial for which the values $P(1),P(2),\ldots,P(M)$ are squares, then $P(t)$ is the square of a polynomial.
\end{theorem}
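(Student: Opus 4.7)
The plan is to reduce the statement to Theorem \ref{miain} via the correspondence, recalled in the Introduction and proved in the Appendix of \cite{breng}, between sequences of rational numbers whose squares have constant second differences and quadratic polynomials in $\mathbb{Q}[t]$ taking square values at consecutive integers. Concretely, if $x_1,\ldots,x_n\in\mathbb{Q}$ and $x_1^2,\ldots,x_n^2$ has constant second differences, then (by Lagrange interpolation and the fact that constant second differences means the discrete second derivative is constant) there is a unique $P(t)\in\mathbb{Q}[t]$ of degree at most two with $P(i)=x_i^2$ for $i=1,\ldots,n$; conversely any such $P$ with $P(1),\ldots,P(n)$ all rational squares produces such a sequence after sign choices. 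Under this correspondence, trivial sequences $x_i=\pm(ai+b)$ correspond exactly to $P(t)=(at+b)^2$ being the square of a polynomial, and rescaling the sequence by $\lambda\in\mathbb{Q}^*$ corresponds to rescaling $P$ by the rational square $\lambda^2$.

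For the first assertion, I would start from a quadratic $P\in\mathbb{Q}[t]$ which is not the square of a polynomial and whose values $P(1),\ldots,P(11)$ are all rational squares. Via the correspondence one obtains a sequence $x_1,\ldots,x_{11}\in\mathbb{Q}$ whose squares have constant second differences and which is not trivial (since $P$ is not a square). Theorem \ref{miain} (under the Bombieri-Lang conjecture for $X_{11,2}$) provides only finitely many such sequences up to scaling by $\mathbb{Q}^*$. Translating back through the correspondence and using that the scaling on sequences induces scaling by rational squares on the polynomials yields finiteness of the set of such $P$ up to rational square scaling, as claimed.

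For the second assertion, I would take $M$ to be the constant provided by the second part of Theorem \ref{miain}. If $P(t)\in\mathbb{Q}[t]$ is a quadratic polynomial for which $P(1),\ldots,P(M)$ are all squares, then, choosing signs, we obtain a sequence $x_1,\ldots,x_M$ of rationals whose squares have constant second differences. By the choice of $M$, Theorem \ref{miain} forces the sequence to be trivial, so $x_i=\pm(ai+b)$ for some $a,b\in\mathbb{Q}$, and hence $P(i)=(ai+b)^2$ for $i=1,\ldots,M$. Since both $P(t)$ and $(at+b)^2$ have degree at most two and agree at $M\geq 3$ integers, they are identically equal, so $P(t)$ is the square of a polynomial.

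There is essentially no obstacle beyond carefully handling the sign ambiguity $x_i=\pm\sqrt{P(i)}$ and the scaling conventions; the substantive content is entirely packaged in Theorem \ref{miain} and in the elementary dictionary between sequences and polynomials.
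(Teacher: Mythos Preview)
Your proposal is correct and is exactly the approach the paper takes: the theorem is presented in the paper simply as Theorem \ref{miain} ``formulated in terms of square values of polynomials,'' with no separate proof given, relying on the elementary dictionary (from the Introduction and \cite{breng}) between sequences of squares with constant second differences and quadratic polynomials taking square values at consecutive integers. Your handling of the scaling and of the trivial/square-polynomial correspondence matches this reformulation precisely.
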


Observe that in the previous theorem $M$ is independent of the polynomial $P$.

We remark that for B\"uchi's Problem (i.e. for \emph{monic} quadratic polynomials) there is a completely different approach by Pasten \cite{PAS} using a version of the ABC conjecture, and unconditional over function fields. However, despite the strength of his results, Pasten remarks in \cite{PAS}, p.\ 2967 that it is not clear that his method can be adapted to the general case (i.e.\ not necessarily monic polynomials), that we consider here.

%\begin{corollary}
%There are no parametric families of non-trivial sequences of $10$ rational numbers whose squares have constant second differences.
%\end{corollary}

Related to Problem \ref{sdprob1}, we obtain the following result from Theorem \ref{main2}:
\begin{theorem}\label{hey}
Assume the Bombieri-Lang conjecture for $\mathbb{Q}$-rational points on the surfaces $X_{n,k}$ with $k\geq 3$ and $n> \frac{5}{k-1}+5$. Then there are, up to scaling, finitely many sequences of $n$ rational numbers whose $k$-th powers have constant second differences. Moreover, there exists an $M>0$ (depending only on $k$) such that there are no non-degenerate sequences of length $M$ formed by $k$-th rational powers with constant second differences.
\end{theorem}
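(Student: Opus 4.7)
The plan is to derive the first statement directly from Theorem \ref{main2} together with the Bombieri-Lang conjecture and Faltings's theorem, and then to bootstrap that finiteness into a uniform bound by ruling out infinite rational sequences whose $k$-th powers have constant second differences.

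For each fixed $n>5/(k-1)+5$ with $k\geq 3$, I would work on a smooth projective model of $X_{n,k}$, which is of general type (a fact established during the proof of Theorem \ref{main} via the symmetric-differential construction). Bombieri-Lang then confines the $\mathbb{Q}$-rational points to a proper Zariski closed subset, a finite union of irreducible curves and isolated points. By Theorem \ref{main2} each such curve has geometric genus at least $2$, so Faltings's theorem gives each only finitely many rational points. Thus $X_{n,k}(\mathbb{Q})$ is finite, and since its points correspond up to scaling to length-$n$ sequences of rationals whose $k$-th powers have constant second differences, the first claim follows.

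For the uniform bound, fix the smallest integer $n_0>5/(k-1)+5$ and let $\sigma_1,\ldots,\sigma_r$ represent, up to scaling, the finitely many non-degenerate length-$n_0$ sequences provided by the first part. Each $\sigma_j$ determines a quadratic polynomial $P_j(t)=A_jt^2+B_jt+C_j$ giving the $k$-th powers, with $(A_j,B_j)\neq(0,0)$ by non-degeneracy. Every non-degenerate length-$M$ sequence with $M\geq n_0$ truncates, up to scaling, to some $\sigma_j$, since the recurrence $x_{i+1}^k=2x_i^k-x_{i-1}^k+D$ shows that a degenerate initial segment of length $n_0$ propagates to a degenerate whole sequence. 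If the uniform $M$ failed to exist, pigeonhole on the $\sigma_j$ would yield a fixed $j$ for which $P_j(i)$ is a rational $k$-th power for every positive integer $i$, producing an infinite sequence $x_1,x_2,\ldots\in\mathbb{Q}$ with $x_i^k=P_j(i)$. After clearing denominators one may take the $x_i\in\mathbb{Z}$, and since $k\geq 3$ the growth $|x_i|\sim|A_j|^{1/k}i^{2/k}$ (or $|B_j|^{1/k}i^{1/k}$ if $A_j=0$) is sublinear in $i$; therefore $|x_{i+1}|-|x_i|\to 0$, so $|x_{i+1}|=|x_i|$ eventually, whence $P_j(i+1)=\pm P_j(i)$ for all large $i$. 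The polynomials $P_j(t+1)-P_j(t)$ and $P_j(t+1)+P_j(t)$ are both non-zero, hence each has only finitely many integer roots, contradicting the infinite sequence.

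The main obstacle is not this arithmetic bootstrap, which is fairly direct, but rather the geometric input required to apply Bombieri-Lang: one must confirm that a smooth projective model of $X_{n,k}$ is of general type. This should follow from the symmetric-differential and positivity computations driving the proof of Theorem \ref{main}, but requires careful handling of the exceptional divisors produced when resolving the singularities of $X_{n,k}$.
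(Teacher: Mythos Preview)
Your argument is correct, and the first part (Bombieri--Lang plus Theorem~\ref{main2} plus Faltings) is exactly what the paper does. However, your final paragraph is unnecessary: Proposition~\ref{sdgentype} (building on Lemma~\ref{sddsmooth}) shows that $X_{n,k}$ is already a smooth complete intersection with canonical sheaf $\mathcal{O}(k(n-3)-n)$, hence of general type once $n>3k/(k-1)$, and this is automatic in the range $n>5/(k-1)+5$. No resolution of singularities is needed.

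For the uniform bound your route genuinely differs from the paper's. The paper adapts Vojta's combinatorial trick from the proof of Theorem~\ref{miain}: if there are exactly $N$ non-degenerate length-$(n_0+1)$ sequences up to scaling, then a non-degenerate sequence of length $N+n_0+1$ would yield $N+1$ overlapping windows, each non-degenerate and pairwise non-proportional (via injectivity of $t\mapsto[f(t):f(t+1):f(t+3)]$ for the associated quadratic $f$), contradicting the count. Your approach instead pigeonholes to a single polynomial $P_j$ taking $k$-th power values at all positive integers, then exploits that $k\ge 3$ forces the integer $k$-th roots to grow sublinearly, so consecutive values eventually coincide in absolute value, contradicting that $P_j(t+1)\pm P_j(t)$ are non-zero polynomials. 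Both are valid; the paper's argument is uniform in $k$ (it also handles $k=2$ modulo trivial sequences), while yours is more elementary but leans on the inequality $2/k<1$, which is precisely why it is unavailable for $k=2$. One small point worth making explicit in your write-up: the ``clearing denominators'' step works because if $D$ is a common denominator for the coefficients of $P_j$ and $x_i=p_i/q_i$ in lowest terms, then $q_i^k\mid D$, so the $q_i$ are uniformly bounded.
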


From this theorem we obtain a result towards Mohanty's conjecture. In order to state this result, let us first introduce some definitions

\begin{definition}
Let $k\geq 3$ and $n\geq 3$ be integers. A $y$\emph{-arithmetic progression} of length $n$ on the curve $y^2=x^k+b$  is a sequence $\left\{(x_j,y_j)\right\}_{j=1}^n$ of points in the curve such that the $y$ coordinates are of the form $y_j=u+vj$ for some $u,v$ with $v\neq 0$.
\end{definition}
We remark that the condition  $v\neq 0$ is imposed as part of the definition in order to avoid uninteresting examples, while the condition $n\geq 3$ is required so that the $y$-coordinates are in a true arithmetic progression. Observe that not all the $x_j$ are equal, because $n\geq 3$ and the $y_j$ are distinct.

\begin{definition}
Let $K$ be a field of characteristic zero. Let $n\geq 3$ be an integer, let $s=\left\{(x_j,y_j)\right\}_{j=1}^n$ and $s'=\left\{(x'_j,y'_j)\right\}_{j=1}^n$ be $K$-rational $y$-arithmetic progressions on the curves $y^2=x^k+b$ and $y^2=x^k+b'$ for some $b,b'\in K^*$ respectively. We say that $s$ and $s'$ are $K$-\emph{equivalent} if there are $\lambda,\mu \in K$ such that $\lambda^k=\mu^2$ and for each $j$ we have $x'_j=\lambda x_j$ and $y'_j=\mu y_j$.
\end{definition}
Note that in this case, $b'=\mu^2 b=\lambda^k b$.

Showing that $\mathbb{Q}$-scalar multiples of a sequence of $k$-th powers correspond to $\mathbb{Q}$-equivalent $y$-arithmetic progressions gives us the following result.
%Note that if we have a $y$-arithmetic progression of length $n$ on the curve $y^2=x^k+b$, then for any $\lambda\in\mathbb{Q}^*$, scaling by $\lambda^{2k}$ gives us a $y$-arithmetic progression of length $n$ on the curve $(\lambda^ky)^2=(\lambda^2 x)^k+\lambda^{2k}b$. When $k$ is even, a similar scaling can be made with $\lambda^k$ instead. When two $y$-arithmetic progressions are related in this way, we will say that they are \emph{equivalent}.

\begin{theorem}\label{moha} Let $k\geq 3$ and $n> \frac{5}{k-1}+5$ . Assume the Bombieri-Lang conjecture for $\mathbb{Q}$-rational points on the surfaces $X_{n,k}$. Then the set of all $\mathbb{Q}$-rational $y$-arithmetic progressions of length $n$ appearing on at least one curve $y^2=x^k+b$ as $b\in \mathbb{Q}^*$ is finite up to $\mathbb{Q}$-equivalence. Moreover, there exists an integer $M_k>0$ (not depending on $b$) such that there are no $y$-arithmetic progressions of length $M_k$ on $y^2=x^k+b$ for any $b\in\mathbb{Q}^*$.
\end{theorem}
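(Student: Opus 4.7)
The plan is to reduce Theorem \ref{moha} to Theorem \ref{hey} by mapping each $\Q$-rational $y$-arithmetic progression to its sequence of $x$-coordinates. Given a $y$-AP $\{(x_j,y_j)\}_{j=1}^n$ on $y^2=x^k+b$ with $y_j=u+vj$ and $v\neq 0$, the identity $x_j^k=(u+vj)^2-b$ exhibits $(x_j^k)_j$ as the values at integer points of a degree-$2$ polynomial in $j$, so it has constant second differences equal to $2v^2$, and $[x_1:\cdots:x_n]\in X_{n,k}(\Q)$. Non-degeneracy is automatic: since squaring is at most two-to-one on $\Q$ and $y_j=u+vj$ takes $n\geq 3$ distinct values, the numbers $y_j^2=x_j^k+b$ cannot all coincide, so the $x_j^k$ are not all equal. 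The hypothesis $n>\tfrac{5}{k-1}+5$ then permits invoking Theorem \ref{hey} under the assumed Bombieri-Lang conjecture, giving both the finiteness of the set of such projective points in $X_{n,k}(\Q)$ and an absolute $M_k$ beyond which no non-degenerate sequence of length $M_k$ exists. The latter assertion yields the second half of Theorem \ref{moha} immediately, since any $y$-AP of length $M_k$ would produce a non-degenerate sequence of that length.

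For the first half I would show the map above descends to an injection on $\Q$-equivalence classes. Suppose two $y$-APs $(x_j,y_j)$ and $(x_j',y_j')$, lying on $y^2=x^k+b$ and $y^2=x^k+b'$ with $y_j=u+vj$ and $y_j'=u'+v'j$, induce the same projective point, so $x_j'=\lambda x_j$ for some $\lambda\in\Q^*$. Then $(y_j')^2-\lambda^k y_j^2=(x_j')^k+b'-\lambda^k(x_j^k+b)=b'-\lambda^k b$ is constant in $j$; expanding the left-hand side and matching the coefficients of $j^2$ and $j$ yields $(v')^2=\lambda^k v^2$ and $u'v'=\lambda^k uv$. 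Setting $\mu:=v'/v\in\Q^*$ gives $\mu^2=\lambda^k$ and $u'=\mu u$, whence $y_j'=\mu y_j$; comparing constant terms then forces $b'=\lambda^k b$. This is exactly the data $(\lambda,\mu)$ witnessing $\Q$-equivalence between the two $y$-APs, so combining this injectivity with the finiteness from Theorem \ref{hey} delivers the first half of Theorem \ref{moha}.

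I expect no serious conceptual obstacle in this plan: the reduction to $X_{n,k}$ is just the identity \eqref{lagatita} used in the introduction, non-degeneracy is forced by the condition $v\neq 0$ in the definition of a $y$-AP, and the injectivity argument amounts to a routine polynomial-coefficient comparison. The most delicate point to get right is making sure the finitely many $\Q$-rational points on $X_{n,k}$ actually parameterize the equivalence classes of $y$-APs rather than merely bound them, which is precisely what the injectivity step above guarantees. All the heavy lifting has been absorbed into Theorem \ref{hey}, which in turn depends on Theorem \ref{main2} and the Bombieri-Lang conjecture for $X_{n,k}$.
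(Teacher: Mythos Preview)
Your proposal is correct and follows essentially the same route as the paper: both reduce to Theorem \ref{hey} by sending a $y$-arithmetic progression to its sequence of $x$-coordinates, and both establish that proportional $x$-sequences force $\mathbb{Q}$-equivalence by comparing coefficients (you expand $(y_j')^2-\lambda^k y_j^2$ as a quadratic in $j$, the paper equivalently takes first and second differences). Your explicit non-degeneracy check matches the observation the paper makes immediately after the definition of $y$-arithmetic progression.
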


We can also obtain the \emph{unconditional} analogous result for function fields of characteristic zero from Theorem \ref{sdffield}. %Here, instead of $\mathbb{Q}$ one considers the function field $K$ of a curve defined over the complex numbers, and the $y$-arithmetic progressions are required to satisfy the necessary condition that the sequence of $y$-coordinates is not proportional to a sequence of complex numbers.
Here, instead of $\mathbb{Q}$ one considers the function field $K$ of a curve defined over the complex numbers. Note that over $\mathbb{C}$ we can have arbitrarily large $y$-arithmetic progressions on curves $y^2=x^k+b$ with $b$ complex. The next result shows that for function fields, these sequences of complex points are the only long sequences up to equivalence.

\begin{theorem}\label{catorce} 
Let $C$ be a smooth projective curve over $\mathbb{C}$ of genus $g$, and let $K$ be the function field of $C$. Let $k\geq 3$  and  $n> \frac{4\max\{g,1\}+1}{k-1}+5$ be positive integers. Every $K$-rational $y$-arithmetic progression of length $n$ on a curve of type $y^2=x^k+B$ with $B\in K^*$, is $K$-equivalent to a $\mathbb{C}$-rational $y$-arithmetic progression on a curve of type $y^2=x^k+b$ with $b\in \mathbb{C}^*$. 
\end{theorem}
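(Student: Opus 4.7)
The plan is to reduce Theorem \ref{catorce} to the function-field result Theorem \ref{sdffield} by extracting the $x$-coordinate sequence of the given $y$-arithmetic progression and using its $k$-th powers as input.

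Starting from a $K$-rational $y$-AP $\{(x_j,y_j)\}_{j=1}^n$ on $y^2=x^k+B$ with $y_j=u+vj$ and $v\in K^*$, the relation $y_j^2=x_j^k+B$ implies that the second differences of $(x_j^k)_{j=1}^n$ equal those of $(y_j^2)_{j=1}^n$, i.e.\ the constant $2v^2\in K$. Since $k\geq 3$ and $n>(4\max\{g,1\}+1)/(k-1)+5$, Theorem \ref{sdffield} provides $\alpha\in K^*$ and $c_1,\ldots,c_n\in\mathbb{C}$ with $x_j=\alpha c_j$. (We may assume $\alpha\neq 0$, as otherwise all $y_j^2$ are equal, forcing $v=0$.)

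The next step is to produce a square root of $\alpha^k$ inside $K$. Writing $2D\in\mathbb{C}$ for the (constant) second difference of $(c_j^k)$, the identity $\alpha^k\cdot 2D=2v^2$ forces $D\neq 0$ (otherwise $v=0$) and yields $\alpha^k=v^2/D$, which is a square in $K$. Fix $\beta\in K$ with $\beta^2=\alpha^k$, and set $\lambda:=1/\alpha$, $\mu:=1/\beta$, so that $\lambda^k=\mu^2$ and $\lambda x_j=c_j\in\mathbb{C}$. To see that $\mu y_j\in\mathbb{C}$, I would take consecutive differences of $y_j^2=\beta^2 c_j^k+B$: this gives $v(2u+v(2j+1))=\beta^2(c_{j+1}^k-c_j^k)$, and dividing through by $v^2=D\beta^2/\alpha^k\cdot\alpha^k=D\alpha^k\cdot\alpha^{-k}\cdot\alpha^k$ (more cleanly, using $\beta^2/v^2=1/D$) yields $2u/v+(2j+1)=(c_{j+1}^k-c_j^k)/D\in\mathbb{C}$. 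Hence $u/v\in\mathbb{C}$, and $\mu y_j=(u/v+j)\sqrt{D}\in\mathbb{C}$, where $\sqrt{D}$ denotes a fixed complex square root. Setting $b:=B/\beta^2=(\mu y_j)^2-c_j^k$ exhibits $b$ as the difference of two complex numbers, hence $b\in\mathbb{C}$, and $b\neq 0$ since $B\in K^*$. Thus $\{(\lambda x_j,\mu y_j)\}_{j=1}^n$ is a $\mathbb{C}$-rational $y$-AP on $y^2=x^k+b$ which is $K$-equivalent to the given one.

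The main obstacle is modest once Theorem \ref{sdffield} is in hand; the substantive point is verifying that the proportionality scalar $\alpha^k$ is actually a square in $K$ (rather than in a quadratic extension), which is what allows the $\mathbb{C}$-rational $y$-AP produced to satisfy the stringent condition $\lambda^k=\mu^2$ demanded by the definition of $K$-equivalence. This squareness is exactly what the arithmetic-progression structure of $(y_j)$ supplies, through the identity $\alpha^k=v^2/D$ with $v\in K$ and $D\in\mathbb{C}^*$.
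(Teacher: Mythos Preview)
Your proposal is correct and follows essentially the same approach as the paper. The paper's own proof simply says ``similar to the proof of Theorem \ref{moha}, using Theorem \ref{sdffield} instead of Theorem \ref{hey},'' and the computation in the proof of Theorem \ref{moha} is exactly the second-difference/first-difference argument you carry out: from $2v^2=\alpha^k\cdot 2D$ deduce that $\alpha^k$ is a square in $K$, then from the first differences deduce $u/v\in\mathbb{C}$, hence $\mu y_j\in\mathbb{C}$. Your identification of the key point (that $\alpha^k$ is a square in $K$, not merely in an extension) matches the paper's, and your observation that this relies on $\mathbb{C}\subset K$ so that $\sqrt{D}\in K$ is the correct justification. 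The garbled display ``$v^2=D\beta^2/\alpha^k\cdot\alpha^k=\ldots$'' should be cleaned up, but your parenthetical ``$\beta^2/v^2=1/D$'' is the relation actually needed.
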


%Similarly, we obtain a result for hyperelliptic curves of the form $y^2=x^k+b$.
%\begin{corollary} Let $k>4$, and let $C:y^2=x^k+b$ be an hyperelliptic curve. Assume the Bombieri-Lang conjecture for the surfaces $X_{n,k}$ with $n\geq 7$. Then there are finitely many $y$-arithmetic progressions of length $7$ on $C$. There exists an $M>0$ (not depending on $b$) such that there are no $y$-arithmetic progressions of length $M$ on $C$.
%\end{corollary}
%The unconditional analogue for function fields is as follows:
%\begin{corollary}
%Let $C$ be a curve of genus $g$, and let $K(C)$ be the function field associated to $C$. Let $E:y^2=x^k+b$ be an hyperelliptic curve on the function field, with $b\in K(C)$ non-constant. There are finitely NOQQQmany $y$-arithmetic progressions of length $\lceil\frac{4g+1}{k-1}+6\rceil$ on $E$.% There exists an $M_k>0$ (not depending on $b$) such that there are no $y$-arithmetic progressions of length $M_k$ on $E$.
%\end{corollary}

We can also obtain unconditional results on parametric families of sequences for Mohanty's conjecture. To state the result in a precise way, we need the following definitions.

\begin{definition} 
Let $k\geq 3$ be an integer. A \emph{parametric family} of $\mathbb{Q}$-rational $y$-arithmetic progressions of length $n\geq 3$ on curves of type $y^2=x^k+b$ is a $K$-rational  $y$-arithmetic progression $\left\{(f_j,g_j)\right\}_{j=1}^n$ on $y^2=x^3+B$ for some $B\in K^*$, where $K$ is the function field of a smooth projective curve $C$ defined over $\mathbb{Q}$ such that $C(\mathbb{Q})$ is infinite. % (thus, $C$ is isomorphic to either $\mathbb{P}^1_\mathbb{Q}$ or an elliptic curve of non-zero rank by Faltings theorem).
\end{definition}

The idea behind this definition is the following observation: except for the finitely many $a\in C(\mathbb{Q})$ that are poles of some $f_j$ or $g_j$, or that are zeros of $B$, for each rational point $a\in C(\mathbb{Q})$ we obtain a $\mathbb{Q}$-rational $y$-arithmetic progression $\left\{(f_j(a),g_j(a))\right\}_{j=1}^n$ on the curve $y^2=x^k+B(a)$ (which is defined over $\mathbb{Q}$). The rational points $a\in C(\mathbb{Q})$ are the parameters for the family.  This definition is consistent with the existing literature on parametric families of examples for Mohanty's conjecture, see for instance \cite{LV}.

Given a $\mathbb{Q}$-rational $y$-arithmetic progression $s$ on some curve $y^2=x^k+b$, the scaling process can be used to produce a parametric family starting  from $s$. These parametric families do not provide a genuine source of infinitely many $y$-arithmetic progressions, and it is natural to distinguish them from the other parametric families.

\begin{definition} A parametric family $\mathcal{F}=\left\{(f_j,g_j)\right\}_{j=1}^n$ of $\mathbb{Q}$-rational arithmetic progression  (for the function field $K$ of a curve $C$ defined over $\mathbb{Q}$) is \emph{non-genuine} if there are $\phi,\psi \in K$ and a $\mathbb{Q}$-rational $y$-arithmetic progression $\left\{(x_j,y_j)\right\}_{j=1}^n$ on a curve $y^2=x^k+b$ with $b\in\mathbb{Q}^*$, such that $\psi^k=\phi^2$ and for each $j$ we have $f_j=\psi x_j$ and $g_j=\phi y_j$. Otherwise, we say that $\mathcal{F}$ is \emph{genuine}. % parametric family.
\end{definition}

From Theorem \ref{sdffield} we obtain the following unconditional result regarding parametric families of $y$-arithmetic progressions.

\begin{theorem}\label{diecisiete}
Let $k\geq 3$ be an integer and let $n> \frac{5}{k-1} +5$. There are no genuine parametric families of $\mathbb{Q}$-rational $y$-arithmetic progressions of length $n$ on curves of type $y^2=x^k+b$.
\end{theorem}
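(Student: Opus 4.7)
The plan is to prove that every parametric family is automatically non-genuine, which immediately yields the theorem. So fix a parametric family $\mathcal{F} = \{(f_j, g_j)\}_{j=1}^n$ on $y^2 = x^k + B$ over $K = \mathbb{Q}(C)$, where $C(\mathbb{Q})$ is infinite and $B \in K^*$. Writing $g_j = u + vj$ with $u, v \in K$ and $v \neq 0$, and taking second differences in $j$ of the identity $(u + vj)^2 = f_j^k + B$, the $k$-th powers $f_1^k, \ldots, f_n^k$ have constant second difference $2v^2 \in K$. Since $C(\mathbb{Q})$ is infinite, Faltings' theorem forces $g(C) \leq 1$, and the base change $C_\mathbb{C}$ has the same geometric genus; so for $n > \frac{5}{k-1} + 5$ Theorem \ref{sdffield} applies over $K_\mathbb{C} := \mathbb{C}(C_\mathbb{C}) \supseteq K$ and yields that $(f_1, \ldots, f_n)$ is proportional over $K_\mathbb{C}$ to a tuple of complex numbers.

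Next I descend this proportionality to $\mathbb{Q}$. Since $C(\mathbb{Q})$ is infinite, $C$ is geometrically irreducible and the constant field of $K$ is $\mathbb{Q}$. Writing $f_j = \lambda c_j$ with $\lambda \in K_\mathbb{C}$ and $c_j \in \mathbb{C}$, and picking $j_0$ with $c_{j_0} \neq 0$ (which must exist, else all $f_j = 0$ would force $v = 0$), the ratios $f_j / f_{j_0} = c_j/c_{j_0}$ lie in $K \cap \mathbb{C} = \mathbb{Q}$. Setting $q_j := c_j/c_{j_0} \in \mathbb{Q}$ and $\tilde\psi := f_{j_0}/q_{j_0} \in K$ gives $f_j = \tilde\psi q_j$ with $q_j \in \mathbb{Q}$ and $\tilde\psi \in K$. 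Let $d_0 \in \mathbb{Q}$ denote the constant second difference of $(q_j^k)$; plugging into $(u+vj)^2 = \tilde\psi^k q_j^k + B$ and taking second differences gives $\tilde\psi^k d_0 = 2v^2$, so $d_0 \neq 0$. Dividing by $v^2$ to rewrite the identity as $((u/v) + j)^2 = (2/d_0) q_j^k + B/v^2$ and taking a first difference yields $2u/v + 2j + 1 = (2/d_0)(q_{j+1}^k - q_j^k) \in \mathbb{Q}$, hence $u/v \in \mathbb{Q}$.

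Finally I construct the witnesses of non-genuineness. I choose $\alpha \in \mathbb{Q}^*$ so that $\alpha^k \tilde\psi^k$ is a square in $K$: for odd $k$, take $\alpha = 2d_0$, making $\alpha^k \tilde\psi^k = 2^{k+1} d_0^{k-1} v^2$ the square of $2^{(k+1)/2} d_0^{(k-1)/2} v \in K$; for even $k$, take $\alpha = 1$ and note $\tilde\psi^k = (\tilde\psi^{k/2})^2$. Set $\psi := \alpha\tilde\psi \in K$, let $\phi \in K$ be the chosen square root of $\psi^k$, and define $x_j := f_j/\psi = q_j/\alpha \in \mathbb{Q}$, $y_j := g_j/\phi$, and $b := B/\phi^2$. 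Direct inspection shows $(\phi/v)^2 \in \mathbb{Q}^*$ (explicitly rational in the odd case, and equal to $2/d_0$ in the even case), so $\phi/v \in K$ is algebraic over $\mathbb{Q}$ and therefore lies in $\mathbb{Q}^*$; combining this with $u/v \in \mathbb{Q}$ gives $y_j = (v/\phi)(u/v + j) \in \mathbb{Q}$, and then $b = y_j^2 - x_j^k \in \mathbb{Q}$, nonzero because $B \in K^*$. This exhibits $\mathcal{F}$ as non-genuine, contradicting the assumption that $\mathcal{F}$ was genuine. The main obstacle is arranging $\psi^k = \phi^2$ inside $K$ while keeping $x_j \in \mathbb{Q}$, which is exactly why the choice of $\alpha$ splits into the odd and even cases.
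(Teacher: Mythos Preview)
Your proof is correct and follows essentially the same route as the paper's (which merely says to imitate the proof of Theorem~\ref{moha} with Theorem~\ref{sdffield} in place of Theorem~\ref{hey}). You have simply made explicit the two steps the paper leaves to the reader: invoking Faltings to force $g(C)\le 1$ so that the bound $n>\frac{5}{k-1}+5$ suffices in Theorem~\ref{sdffield}, and descending the proportionality from $\mathbb{C}$ to $\mathbb{Q}$ via the fact that the constant field of $K$ is $\mathbb{Q}$; the odd/even split in your construction of $\psi,\phi$ is the function-field analogue of the ``$\lambda^k=\mu^2$'' step in the proof of Theorem~\ref{moha}.
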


%In the case $k=3$, from \cite{LV} and \cite{ALV} we know that there are genuine parametric families (parametrized by elliptic curves)  of $\mathbb{Q}$-rational $y$-arithmetic progressions of length $6$ on Mordell's elliptic curves. The previous theorem shows that there are no genuine parametric families of length $8$. The question about the existence of such families for length $7$ remains open.

%From Theorem \ref{main2} we obtain an unconditional result regarding parametric families of $y$-arithmetic progressions on curves $y^2=x^k+b$. 
%
%\begin{corollary} Let $\mathcal{S}_{n,k}$ be the set of all $y$-arithmetic progressions of length $n$ (up to equivalence) that appear on at least one curve $y^2=x^k+b$, with $b\in\mathbb{Q}$. There are no parametric families of elements in $\mathcal{S}_{\lceil\frac{5}{k-1}+5\rceil,k}$.%,%of length $8$ on the curve $y^2=x^3+b$. 
%%Let $k\geq 4$. There are no parametric families of $y$-arithmetic progressions of length $\lceil\frac{5}{k-1}+5\rceil$ on the curve $y^2=x^k+b$.
%\end{corollary}

%For hyperelliptic curves we have the following
%\begin{corollary}
%Let $C$ be a curve of genus $g$, and let $K(C)$ be the function field associated to $C$. Let $E:y^2=x^k+b$ be a hyperelliptic curve on the function field, with $b\in K(C)$ non-constant. There are finitely many $y$-arithmetic progressions of length $\frac{4g+1}{3}+6$ on $E$. There exists an $M>0$ (not depending on $b$) such that there are no $y$-arithmetic progressions of length $M$ on $E$.
%\end{corollary}

Part of this work (the case $k=2$) corresponds to Chapter 4 of the author's PhD thesis at Queen's University. I thank my supervisor Professor Ernst Kani for his dedicated supervision and helpful revisions on early versions of this work. I thank Manfred Kolster for his feedback and suggestions that led me to extend the work from my thesis to higher powers. I also thank Andrew Bremner, Juliusz Brzezinski and Xavier Xarles for their comments on previous versions of this work and for pointing out some useful references. %I thank Xavier Xarles for commenting on previous versions of this work.

%%%%% DE AQUI EMPECE POTENCIAS KESIMAS

\section{Outline of the method}\label{explanation}

In this section we recall the method given in Chapter 3 of \cite{thesis}, which is an extension of the method implicit in Vojta's work \cite{V}. As in \cite{subm}, we include some improvements to \cite{V} which allow us to obtain better bounds in applications.
%This is an account of the method implicit in Vojta's work \cite{V}, inspired by work of Bogomolov. For analytic proofs of these facts in the particular case studied by Vojta, see \cite{V}. For the general case treated in an algebraic setting, see \cite{thesis}. In particular Theorem 3.87 in \cite{thesis} is an improvement of Lemma 2.10 in \cite{V}, and permits us to obtain better numerical results.
%We outline this method in some generality beyond what we need here because it can be useful in other applications.

Let us first list the relevant definitions
%We work with the notion of $\omega$-integral curve from \cite{V}:

\begin{definition}\label{omegaint}
Let $X$ be a smooth variety over a field of characteristic zero, let $\mathcal{L}$ be an invertible sheaf on $X$ and let $\omega\in H^0(X,\mathcal{L}\otimes S^r\Omega^1_{X/\mathbb{C}})$, where $r$ is an integer. An irreducible curve $C$ on $X$ is said to be $\omega$\emph{-integral} if the image of the section $\varphi^*_C\omega$ in $H^0(\tilde{C},\varphi^*_C\mathcal{L}\otimes S^r\Omega^1_{\tilde{C}})$ is zero, where $\varphi_C:\tilde{C}\to X$ is the normalization of $C\subset X$. 
\end{definition}

Let $X$ be a smooth irreducible surface. Let $U\subset X$ be an affine open subset such that $\mathcal{L}_{|U}\cong \mathcal{O}_{X|U}$ and so that there are $u,v\in \mathcal{O}_X(U)$ satisfying that $du,dv$ form a basis of $\Omega^1_{X/\mathbb{C}}(U)$. In this open subset, any given $\omega\in H^0(X,\mathcal{L}\otimes S^r\Omega^1_{X/\mathbb{C}})$ has the form $\sum_{i=0}^r A_i(du)^{r-i}(dv)^i$, where $A_i\in \mathcal{O}_U(U)$. We can then define the following:

\begin{definition}\label{delta}
Let $\delta\in K:=k(X)$ be the discriminant of the monic polynomial $\Sigma_{i=0}^r\frac{A_i}{A_0}T^{r-i}\in K[T]$. We define the \emph{discriminant} of $\omega$ to be the Zariski closed set $$\Delta_U:=(X/U)\cup \mathbb{V}_U(A_0)\cup \mathbb{V}_{U\backslash \mathbb{V}_U(A_0)}(\delta)\subset X$$ (where $\mathbb{V}_U(A_0)$ denotes the zero locus of $A_0\in \mathcal{O}_U(U)$ on $U$).
\end{definition}

\begin{definition}
Let $f:Y\to Z$ be a morphism of varieties, and let $\mathcal{L}$ be an invertible sheaf on $Z$. We define $$f^\bullet: H^0(Z,\mathcal{L}\otimes S^r\Omega^1_{Z/\mathbb{C}})\to H^0(Y,f^*\mathcal{L}\otimes S^r\Omega^1_{Y/\mathbb{C}})$$ to be the map induced by the following composition
\begin{eqnarray*}
\mathcal{L}\otimes S^r\Omega^1_{Z/\mathbb{C}} &\to & f_*f^*(\mathcal{L}\otimes S^r\Omega^1_{Z/\mathbb{C}})\cr
&\to &f_*(f^*\mathcal{L}\otimes f^*S^r\Omega^1_{Z/\mathbb{C}})\cr
&\to &f_*(f^*\mathcal{L}\otimes S^rf^*\Omega^1_{Z/\mathbb{C}})\cr
&\to &f_*(f^*\mathcal{L}\otimes S^r\Omega^1_{Y/\mathbb{C}}).
\end{eqnarray*}
\end{definition}

\begin{definition}
Given a smooth irreducible surface $X$, an effective Cartier divisor $D$ of $X$ with associated subscheme $Y=Y_D$, a locally free sheaf $\mathcal{F}$ and a section $s\in H^0(X,\mathcal{F})$, we say that $s$ \emph{vanishes identically along} $D$ if the image of $s$ under the map $H^0(X,\mathcal{F})\to H^0(X,\mathcal{F}\otimes i_{Y*}\mathcal{O}_Y)$ is zero.
\end{definition}

The general outline of the method is as follows: Given a smooth projective surface $X$ embedded in a fixed projective space, we can attempt to find the curves on $X$ with geometric genus bounded by a number $g$ as follows:
\begin{itemize}
\item We choose a morphism $\pi:X\to\mathbb{P}^2$, an invertible sheaf $\mathcal{L}$ on $\mathbb{P}^2$, and a section $\omega\in H^0(\mathbb{P}^2,\mathcal{L}\otimes S^r\Omega^1_{\mathbb{P}^2/\mathbb{C}})$ so that each irreducible component of the branch divisor $B$ of $\pi$ is $\omega$-integral.
\item Find all $\omega$-integral curves in $\mathbb{P}^2$.
\item Find all $\pi^\bullet\omega$-integral curves in $X$.
\item If $\mathcal{L}$ satisfies that $\deg_{\tilde{C}}(\varphi^*_C\mathcal{L})<r(2-2g)$ for all curves $C$ of genus less than $g$, then all curves of genus less than $g$ will be $\pi^\bullet\omega$-integral. %We look in our list of $\pi^\bullet\omega$-integral the ones with genus less than $g$.
\item If the previous condition is not satisfied, one modifies $\mathcal{L}$ and $\omega$ using the fact that the branch curves of $\pi$ are $\omega$-integral.
\end{itemize}
So finally, to find all curves of genus less than $g$ on $X$, we only have to check which curves in our list of $\pi^\bullet\omega$-integral curves have genus less than $g$.

Now we explain in detail the method. To construct the differential $\omega$ we heavily use the condition that the irreducible components of the branch divisor are $\omega$-integral, which will be useful in further steps of the method. Vojta instead uses computer search in positive characteristic. In Appendix A of \cite{V}, he explains how he found his differential for B\"uchi's Problem.% the application considered in \cite{V}, using computer search in positive characteristic.

Then we need a way to find $\omega$-integral curves. The following result will allow us to check for any $C\subset X$ whether or not $C$ is $\omega$-integral (see Corollary 3.72 in \cite{thesis}):
\begin{theorem}\label{TYU}
Let $X$ be a smooth surface, let $\omega\in H^0(X,\mathcal{L}\otimes S^r\Omega^1_{X/\mathbb{C}})$. Let $C$ be an irreducible curve in $X$, let $U=\mathrm{Spec}(A)$ be an open set in $X$ such that $C\cap V\neq \emptyset$, and $\mathcal{L}_{|U}\cong \mathcal{O}_U$. Let $I=(g)$ be an ideal in $A$ such that $C\cap U$ is generated by $I$. Let $\omega_0$ be the image of $\omega$ by the maps $$H^0(X,\mathcal{L}\otimes S^r\Omega^1_{X/\mathbb{C}})\to H^0(U,\mathcal{L}\otimes S^r\Omega^1_{X/\mathbb{C}})\stackrel{h}{\rightarrow} H^0(U,S^r\Omega^1_{U/\mathbb{C}})=S^r\Omega^1_{A/\mathbb{C}}.$$ If $\omega_0\in S^r\Omega^1_{A/\mathbb{C}}$ lies in $gS^r(\Omega^1_{A/\mathbb{C}})+dgS^{r-1}\Omega^1_{A/\mathbb{C}}$, then $C$ is $\omega$-integral.
\end{theorem}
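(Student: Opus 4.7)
\emph{Plan.} The goal is to show that $\varphi_C^\bullet\omega=0$ as a section of the invertible sheaf $\varphi_C^*\mathcal{L}\otimes S^r\Omega^1_{\tilde{C}}$ on the smooth irreducible curve $\tilde{C}$. Since this sheaf is a line bundle on an integral scheme and therefore a torsion-free $\mathcal{O}_{\tilde{C}}$-module, global vanishing is equivalent to vanishing on any nonempty Zariski open subset. I would take $\tilde{V}:=\varphi_C^{-1}(C\cap U)$, which is nonempty by the assumption that $C\cap U\neq\emptyset$, and reduce the global assertion to the local hypothesis about $\omega_0$.

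Over $\tilde{V}$, the bundle $\varphi_C^*\mathcal{L}$ is trivialized via $\mathcal{L}|_U\cong\mathcal{O}_U$, so $\varphi_C^\bullet\omega|_{\tilde{V}}$ is obtained by pushing $\omega_0\in S^r\Omega^1_{A/\mathbb{C}}$ through the composition
\[ S^r\Omega^1_{A/\mathbb{C}}\longrightarrow S^r\Omega^1_{(A/g)/\mathbb{C}}\longrightarrow H^0(\tilde{V},S^r\Omega^1_{\tilde{C}/\mathbb{C}}), \]
where the first map is the restriction to $C\cap U=\mathrm{Spec}(A/g)$ coming from the factorization $\tilde{V}\to C\cap U\hookrightarrow U$, and the second is induced by the canonical map $\varphi_C^*\Omega^1_C\to\Omega^1_{\tilde{C}}$ on a smooth curve. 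It is therefore enough to show that $\omega_0$ already dies in $S^r\Omega^1_{(A/g)/\mathbb{C}}$.

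This is precisely what the conormal sequence for the inclusion $C\cap U\hookrightarrow U$ provides. The right-exact sequence
\[ (g)/(g)^2 \xrightarrow{\,d\,} \Omega^1_{A/\mathbb{C}}\otimes_A A/g \longrightarrow \Omega^1_{(A/g)/\mathbb{C}}\longrightarrow 0, \]
with the leftmost arrow sending $g\mapsto dg\otimes 1$, identifies the kernel of the surjection $\Omega^1_{A/\mathbb{C}}\twoheadrightarrow \Omega^1_{(A/g)/\mathbb{C}}$ with $g\Omega^1_{A/\mathbb{C}}+A\,dg$. Applying the standard formula that, for a surjection $N\twoheadrightarrow P$ with kernel $K$, the induced map $S^rN\twoheadrightarrow S^rP$ has kernel $K\cdot S^{r-1}N$, one finds that the kernel of $S^r\Omega^1_{A/\mathbb{C}}\twoheadrightarrow S^r\Omega^1_{(A/g)/\mathbb{C}}$ is exactly $gS^r\Omega^1_{A/\mathbb{C}}+dg\cdot S^{r-1}\Omega^1_{A/\mathbb{C}}$, which by hypothesis contains $\omega_0$. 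This closes the argument.

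The only technical subtlety is the symmetric-power kernel computation, which requires combining right-exactness of $S^r$ with the conormal presentation; the rest is a formal diagram chase reducing a global vanishing statement on the normalization to an algebraic identity on a single affine patch where all bundles are trivial.
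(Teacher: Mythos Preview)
Your argument is correct. The paper itself does not prove this statement; it only quotes it, with a reference to Corollary~3.72 of the author's thesis. Your proof is the natural one: reduce global vanishing of $\varphi_C^\bullet\omega$ to vanishing on the nonempty open $\tilde V=\varphi_C^{-1}(C\cap U)$ using torsion-freeness of a line bundle on the integral curve $\tilde C$, trivialize $\mathcal{L}$ over $U$, factor $\varphi_C$ through $C\cap U=\mathrm{Spec}(A/g)$, and then kill $\omega_0$ already in $S^r\Omega^1_{(A/g)/\mathbb{C}}$ via the conormal sequence and the kernel formula $\ker(S^rN\twoheadrightarrow S^rP)=K\cdot S^{r-1}N$. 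This is almost certainly the argument intended in the thesis, and there is nothing to add.

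One cosmetic remark: in the line where you apply the symmetric-power kernel formula, $N=\Omega^1_{A/\mathbb{C}}$ is an $A$-module while $P=\Omega^1_{(A/g)/\mathbb{C}}$ is an $A/g$-module, so strictly speaking one should observe that $S^r_A P=S^r_{A/g}P$ (since $g$ already annihilates $P$). This is immediate, but worth one sentence if you write it out in full.
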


%Once we find a list of $\omega$-integral curves on $X$, we want to check if this list consists of \emph{all} $\omega$-integral curves of $X$. One can do this by defining the \emph{discriminant} of $\omega$, which permits us to locally count the number of $\omega$-integral curves passing through a point. Fix a non-empty open subset $V$ on $X$ such that there are regular functions $u,v\in\mathcal{O}_X(V)$ with the property that $du,dv$ are a basis of $\Omega^1_{X/\mathbb{C}}(V)$ as $\mathcal{O}_X(V)$-module.
%Let $U\subset V$ be a non-empty basic affine open set such that $\mathcal{L}_{|U}\cong\mathcal{O}_U$. Under the isomorphism $H^0(U,\mathcal{L}\otimes S^r\Omega^1_{X/\mathbb{C}})\cong H^0(U,S^r\Omega^1_{X/\mathbb{C}})$ we have that the image of $\omega_{|U}$ in $H^0(U,S^r\Omega^1_{X/\mathbb{C}})$ can be written as $\Sigma_{i=0}^r A_i(du)^{r-i}(dv)^i$ with $A_i\in \mathcal{O}_U(U)$.

Once we find some $\omega$-integral curves in $\mathbb{P}^2$, we want to check that we have all of them. For this we count the number of $\omega$-integral curves passing through each point on a Zariski open subset of $\mathbb{P}^2$ (which we can determine from the equations of $\omega$). %One can count the number of $\omega$-integral curves passing through any point $P$ outside of $\Delta_U$ by the following result (see Theorem 3.76 in \cite{thesis}): 
The following result allows us to do this (see Theorem 3.76 in \cite{thesis}):
\begin{theorem}\label{eee} Let $\omega\in H^0(X,\mathcal{L}\otimes S^r\Omega^1_{X/\mathbb{C}})$, and let $\Delta_U$ be defined as before. For any given point $P\in X\backslash \Delta_U$ there are at most $r$ $\omega$-integral curves passing through $P$. More precisely, the sum of the multiplicities $\mu_P(C)$ for all $\omega$-integral curves $C$ passing through $P$ is at most $r$.
\end{theorem}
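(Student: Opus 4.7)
The plan is to carry out a local computation at $P$ and show that every analytic branch of an $\omega$-integral curve through $P$ is smooth with tangent direction equal to one of the $r$ distinct roots of a single degree-$r$ polynomial. Choose an affine open $U \ni P$ with local coordinates $u, v$ so that $du, dv$ form a basis of $\Omega^1_{X/\mathbb{C}}$ on $U$ and $\mathcal{L}|_U$ is trivial; write $\omega = \sum_{i=0}^{r} A_i\, (du)^{r-i}(dv)^i$, and define the \emph{symbol polynomial} $F_P(T) := \sum_{i=0}^r A_i(P)\, T^{r-i}$. The hypothesis $P \notin \Delta_U$ guarantees $A_0(P) \ne 0$ and $\mathrm{disc}(F_P) \ne 0$, so $F_P$ has exactly $r$ distinct simple roots $T_1, \ldots, T_r \in \mathbb{C}$.

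For each $\omega$-integral irreducible curve $C$ passing through $P$ and each preimage $Q$ of $P$ in the normalization $\tilde C$ (one for each analytic branch of $C$ at $P$), choose a local parametrization $u = u(t),\, v = v(t)$ with uniformizer $t$ at $Q$. By definition, $\omega$-integrality means that $\sum_{i=0}^r A_i(u(t),v(t))\, u'(t)^{r-i} v'(t)^i$ vanishes identically in $t$. Comparing lowest-order terms and using $A_0(P)\ne 0$ shows first that $\mathrm{ord}_t u(t) = \mathrm{ord}_t v(t) = 1$, so the branch is smooth at $P$ (otherwise the $i=0$ summand dominates and cannot vanish); and second that the tangent slope $u'(0)/v'(0)$ must satisfy $F_P(u'(0)/v'(0)) = 0$, hence it equals some $T_j$. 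Consequently $\mu_P(C)$ equals the number of analytic branches of $C$ at $P$, and each such branch is labelled by a unique $T_j$.

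To conclude, it remains to show that for each fixed $j$, at most one $\omega$-integral analytic branch through $P$ has tangent direction $T_j$. Since $T_j$ is a simple root of $F_P$ and $A_0(P) \ne 0$, the analytic implicit function theorem applied to $\Phi(u,v,T) := \sum A_i(u,v)\, T^{r-i}$ at $(P, T_j)$ produces a holomorphic function $\tau_j(u,v)$ near $P$ with $\tau_j(P) = T_j$ and $\Phi(u, v, \tau_j(u,v)) = 0$; varying $j$ gives the factorization $\Phi = A_0 \prod_{j=1}^r (T - \tau_j(u,v))$ in a neighborhood of $P$. An $\omega$-integral branch with tangent $T_j$ must satisfy the scalar analytic ODE $du/dv = \tau_j(u,v)$ with initial condition $(u,v) = (0,0)$, which by Picard--Lindel\"of admits a unique holomorphic solution. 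Thus distinct analytic branches of $\omega$-integral curves through $P$ receive distinct labels $T_j$, and summing over branches yields $\sum_C \mu_P(C) \le r$. The main obstacle is this uniqueness step: it relies essentially on the nonvanishing of $\mathrm{disc}(F_P)$ at $P$ to split the symbol into $r$ simple linear factors over the analytic local ring, without which coincident integral directions could occur and the bound would break down.
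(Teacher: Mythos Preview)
The paper does not actually prove this result; it is cited without proof from the author's thesis (Theorem~3.76 of \cite{thesis}). Your overall strategy---analytically factor $\omega$ near $P$ as $A_0\prod_{j=1}^r(du-\tau_j\,dv)$ using the discriminant hypothesis, then use ODE uniqueness to get at most one integral branch per factor---is correct and is the expected argument.

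There is, however, a flawed step. Your claim that ``comparing lowest-order terms and using $A_0(P)\ne 0$ shows $\mathrm{ord}_t u(t)=\mathrm{ord}_t v(t)=1$ \dots\ (otherwise the $i=0$ summand dominates)'' does not hold as stated: that domination argument only rules out $\mathrm{ord}_t u<\mathrm{ord}_t v$. If $\mathrm{ord}_t u=\mathrm{ord}_t v=m\ge 2$, every summand has the same leading order $r(m-1)$ and nothing dominates; and if $\mathrm{ord}_t u>\mathrm{ord}_t v$ with $A_r(P)=0$ (equivalently $T=0$ is one of the $T_j$, which the hypotheses do not exclude), the order comparison is inconclusive. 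So this step establishes neither smoothness of the branch nor that the tangent slope $u'(0)/v'(0)$ is well-defined.

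The fix is to drop this step entirely and extract smoothness from the ODE argument you already set up. From the factorization and $A_0(P)\ne 0$, integrality of the branch forces $u'-\tau_k(u,v)\,v'\equiv 0$ for a single $k$ (the local ring of $\tilde C$ at $Q$ is a domain). Since $du-\tau_k\,dv$ is a nowhere-vanishing holomorphic $1$-form near $P$, the branch is an integral curve of a smooth foliation and hence coincides with the unique smooth leaf $L_k$ through $P$; birationality of the normalization map $\tilde C\to C$ then forces the parametrization to be unramified at $Q$, so the branch is smooth and contributes exactly $1$ to $\mu_P(C)$. The index $k$ is determined by the germ $L_k$, so distinct branches (of the same or different curves) receive distinct labels, and $\sum_C\mu_P(C)\le r$ follows.
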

Hence, if we find $r$ $\omega$-integral curves for a point $P\in X\backslash \Delta_U$ we know that these are all the $\omega$-integral curves passing through $P$. Verifying that there are $r$ $\omega$-integral curves passing through each point of $X\backslash U$ and checking if the component curves of $\Delta_U$ are $\omega$-integral (using Theorem \ref{TYU}) leads us to know that our list of $\omega$-integral curves of $X$ is complete.

%Usually, one defines $\omega\in H^0(X,\mathcal{L}\otimes S^r\Omega^1_{X/\mathbb{C}})$  and applies the previous theorem on a surface which is easier to understand (for example $X=\mathbb{P}^2$ or $X=\mathbb{P}^1\times \mathbb{P}^1$). Here we work with $X_{2,k}=\mathbb{P}^2$. 
%
%Once we find all $\omega$-integral curves in $X_{2,k}=\mathbb{P}^2$, we can use the morphism $\rho_{n,k}:X_{n,k}\to X_{2,k}$ to find all $\omega_n$-integral curves on $X_{n,k}$, for $\omega_n$ a suitable differential in $X_{n,k}$ which depends on $\omega$. Here we consider $\omega_n=\rho_{n,k}^\bullet\omega$, where $\rho^\bullet_{n,k}$ is the induced map on global sections from the $\mathcal{O}_{X_{2,k}}$-homomorphism
%\begin{eqnarray*}
%\mathcal{L}\otimes S^2\Omega^1_{X_{2,k}/\mathbb{C}} &\to & \rho_{n,k*}\rho_{n,k}^*(\mathcal{L}\otimes S^2\Omega^1_{X_{2,k}/\mathbb{C}})\cr
%&\to &\rho_{n,k*}(\rho_{n,k}^*\mathcal{L}\otimes\rho_{n,k}^*S^2\Omega^1_{X_{2,k}/\mathbb{C}})\cr
%&\to &\rho_{n,k*}(\rho_{n,k}^*\mathcal{L}\otimes S^2\rho_{n,k}^*\Omega^1_{X_{2,k}/\mathbb{C}})\cr
%&\to &\rho_{n,k*}(\rho_{n,k}^*\mathcal{L}\otimes S^2\Omega^1_{X_{n,k}/\mathbb{C}}),
%\end{eqnarray*}
%where the last map is induced by the morphism $$(\rho_{n,k})_{X_{n,k}/X_{2,k}/\mathbb{C}}:\rho^*_{n,k}\Omega^1_{X_{2,k}/\mathbb{C}}\to \Omega^1_{X_{n,k}/\mathbb{C}}$$ from \cite{EGAEGA} IV.16.4.19.1. (Note that translating Definition \ref{omegaint} into this language, we say that a curve $C$ on a surface $X$ is $\omega$-integral if and only if $\varphi_C^\bullet\omega=0$.)

After we find all $\omega$-integral curves in $\mathbb{P}^2$, we want to find all $\pi^\bullet\omega$-integral curves in $X$. They will be exactly the pullbacks under $\pi$ of the $\omega$-integral curves thanks to the following result (see Theorem 3.35 in \cite{thesis})

%The following result will allow us to find all $\omega_n$-integral curves on $X_{n,k}$ (see Theorem 3.35 in \cite{thesis})

\begin{theorem}\label{bullet}
Let $\pi:X'\to X$ be a morphism of smooth surfaces. Let $C'\subset X'$ be an irreducible curve and $C=\pi(C')$ be an irreducible curve on $X$. Let $\mathcal{L}$ be an invertible sheaf on $X$ and let $\omega\in H^0(X,\mathcal{L}\otimes S^r\Omega^1_{X/\mathbb{C}})$. The following are equivalent
\begin{itemize}
\item The curve $C$ is $\omega$-integral;
\item The curve $C'$ is $\pi^\bullet\omega$-integral.
\end{itemize}
\end{theorem}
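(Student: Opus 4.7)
The plan is to reduce the equivalence to two ingredients: the functoriality of the operation $f^\bullet$ under composition, and the injectivity of pullback of symmetric differentials along a non-constant morphism of smooth curves in characteristic zero.

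First, I would use the universal property of normalization to produce a morphism $\psi:\tilde{C'}\to\tilde{C}$ satisfying $\pi\circ\varphi_{C'}=\varphi_C\circ\psi$. Indeed, since $\pi(C')=C$, the composition $\pi\circ\varphi_{C'}:\tilde{C'}\to X$ has set-theoretic image contained in $C$; because $\tilde{C'}$ is normal and $\varphi_C:\tilde{C}\to X$ is the normalization of $C$, this morphism lifts uniquely to $\psi$. Moreover, $\psi$ is non-constant because $\pi|_{C'}:C'\to C$ is dominant.

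Next I would establish the formal identity $\varphi_{C'}^\bullet\circ\pi^\bullet=\psi^\bullet\circ\varphi_C^\bullet$. This is a consequence of the functoriality of pullback of quasi-coherent sheaves together with the functoriality of the canonical map $f^*\Omega^1_Z\to\Omega^1_Y$ for a morphism $f:Y\to Z$ of smooth varieties; tracing the four composed arrows in the definition of $f^\bullet$ and using the commutation relation above yields the identity. The implication ``$C$ is $\omega$-integral $\Rightarrow$ $C'$ is $\pi^\bullet\omega$-integral'' is then immediate: if $\varphi_C^\bullet\omega=0$, then $\varphi_{C'}^\bullet(\pi^\bullet\omega)=\psi^\bullet(\varphi_C^\bullet\omega)=0$.

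For the converse, I need $\psi^\bullet$ to be injective on global sections of $\varphi_C^*\mathcal{L}\otimes S^r\Omega^1_{\tilde{C}}$. Since $\psi$ is a non-constant morphism of smooth projective curves in characteristic zero, it is finite and generically \'etale, so the canonical map $\psi^*\Omega^1_{\tilde{C}}\to\Omega^1_{\tilde{C'}}$ is a non-zero morphism of line bundles on $\tilde{C'}$ and hence injective. Taking $r$-th symmetric powers and tensoring with $\psi^*\varphi_C^*\mathcal{L}$ preserves injectivity, so the induced map on global sections is injective. Hence if $C'$ is $\pi^\bullet\omega$-integral, meaning $\psi^\bullet(\varphi_C^\bullet\omega)=0$, we get $\varphi_C^\bullet\omega=0$, so $C$ is $\omega$-integral. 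The main obstacle, and really the only non-formal part of the argument, is the bookkeeping required to verify the functoriality identity for $f^\bullet$: one must match up the counit of adjunction, the compatibilities of pullback with tensor and symmetric powers, and the functoriality of the K\"ahler differential sheaves---routine but easy to slip on.
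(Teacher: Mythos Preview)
Your argument is correct and is precisely the natural one: lift to normalizations via the universal property, use functoriality of $(-)^\bullet$ to identify $\varphi_{C'}^\bullet(\pi^\bullet\omega)$ with $\psi^\bullet(\varphi_C^\bullet\omega)$, and then exploit that a non-constant morphism of smooth curves in characteristic zero induces an injective map on twisted symmetric differentials. The only minor point worth tightening is the factorization step: strictly speaking you should note that $\pi\circ\varphi_{C'}$ factors scheme-theoretically through the closed subscheme $C\hookrightarrow X$ (because $\tilde{C'}$ is reduced and the set-theoretic image lies in $C$), and then apply the universal property of $\tilde{C}\to C$ to the resulting dominant morphism $\tilde{C'}\to C$ from a normal source.

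As for comparison with the paper: the paper does not actually prove this statement. It is quoted from the author's thesis (Theorem~3.35 in \cite{thesis}) and used as a black box, so there is no in-paper proof to compare against. Your proposal is exactly the argument one would expect to find there.
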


%Hence we can easily compute the set of all $\omega_n$-integral curves on $X_{n,k}$ once we have found all $\omega$-integral curves on $X_{2,k}=\mathbb{P}^2$.

The last step is to prove that any irreducible curve $C\subset X_{n,k}$ of genus less than or equal to $g$ is in the list of $\omega_n$-integral curves, for $n$ sufficiently large (depending on $g$). This is done by showing that the degree of the sheaf  $\varphi_C^*\mathcal{L}\otimes S^r\Omega^1_{\tilde{C}/\mathbb{C}}$ (from Definition \ref{omegaint}) over the normalization $\tilde{C}$ of a curve of genus less than or equal to $g$ on $X_{n,k}$ is negative, so the section $\varphi^\bullet_C\omega_n$ is forced to be zero.

If a sheaf $\varphi_C^*\mathcal{L}$ does not satisfy that condition, we can still find another invertible sheaf $\mathcal{L}'$ on $X$ for which it is more likely to satisfy the condition. From the following result (see Theorem 3.87 in \cite{thesis} and Lemma 2.10 in \cite{V}) we see how to use the fact that all irreducible components of the branch divisor of $\pi$ are $\omega$-integral.

%If this degree is not negative for the sheaf $\mathcal{L}$ we chose, we can find a better one when the branch curves of the morphism $\rho_{n,k}$ are $\omega$-integral. This requires the following definition:

%The following result (see Theorem 3.87 in \cite{thesis}), is a generalization of Lemma 2.10 in \cite{V} to morphisms with higher ramification.

\begin{theorem}\label{elteo1}
Let $X$ and $X'$ be smooth integral surfaces defined over $\mathbb{C}$. Let $\pi:X'\to X$ be a dominant morphism and let $D\subseteq X'$ be a prime divisor such that $C=\pi(D)$ is a curve. Suppose that $\pi$ has ramification index $e=e_{D/C}(\pi)>1$ at $D$. Let $\mathcal{L}$ be an invertible sheaf on $X$, let $r$ be a positive integer, and let $\omega\in H^0(X,\mathcal{L}\otimes S^r\Omega^1_{X/\mathbb{C}})$. If $C$ is $\omega$-integral, then $\pi^\bullet\omega\in H^0(X',\pi^*\mathcal{L}\otimes S^r\Omega^1_{X'/\mathbb{C}})$ vanishes identically along $(e-1)D$.
\end{theorem}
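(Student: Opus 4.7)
The statement is local on $X'$: it amounts to showing that $\pi^\bullet\omega$, viewed as a section of the locally free sheaf $\Fc=\pi^*\Lc\otimes S^r\Omega^1_{X'/\C}$, lies in the subsheaf $\Fc\otimes \Oc_{X'}(-(e-1)D)$. This containment is determined by the stalk at the generic point of $D$, so it suffices to check divisibility by $u^{e-1}$ in the formal completion $\widehat{\Oc}_{X',P}$ at a single general closed point $P\in D$, where $u$ is a local equation for $D$. I choose $P$ so that $Q=\pi(P)$ is a smooth point of $C$ and $\Lc$ is trivial on a neighborhood of $Q$.

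Trivializing $\Lc$ near $Q$ and picking local parameters $(x,y)$ on $X$ with $C=\{x=0\}$ and $y$ a uniformizer of $C$ at $Q$, write
$$\omega \;=\; \sum_{i+j=r} A_{ij}(x,y)\,(dx)^i(dy)^j.$$
Since $C$ is smooth at $Q$, the normalization $\varphi_C$ is an isomorphism there, and the $\omega$-integrality of $C$ reduces to the vanishing of $\omega|_C = A_{0r}(0,y)(dy)^r$; equivalently $x\mid A_{0r}(x,y)$ in a neighborhood of $Q$.

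Next I set up matching formal coordinates on $X'$. Choose parameters $(u,v)\in\widehat{\Oc}_{X',P}$ with $D=\{u=0\}$. By the definition of ramification index, $\pi^*x = u^e\cdot w$ for some unit $w\in\widehat{\Oc}_{X',P}^{\times}$. Because we work over $\C$, the unit $w$ admits an $e$-th root in $\widehat{\Oc}_{X',P}$; replacing $u$ by $u\cdot w^{1/e}$ (still a local generator of the ideal of $D$) I may assume $\pi^*x=u^e$. Writing $\pi^*y=g(u,v)$, the pullback is
$$\pi^\bullet\omega \;=\; \sum_{i+j=r} A_{ij}(u^e,g)\,(e u^{e-1}\,du)^i\,(g_u\,du + g_v\,dv)^j.$$
For every term with $i\geq 1$, the factor $(eu^{e-1}du)^i$ contributes $u^{i(e-1)}$, and $i(e-1)\geq e-1$. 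The single remaining term is the one with $(i,j)=(0,r)$, whose coefficient $A_{0r}(u^e,g)$ is divisible by $u^e$ thanks to $x\mid A_{0r}(x,y)$, and $e\geq e-1$. Hence every term of $\pi^\bullet\omega$ is divisible by $u^{e-1}$ in the completion, which is precisely the desired vanishing along $(e-1)D$.

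The step I expect to require the most care is the formal change of coordinates arranging $\pi^*x=u^e$: it relies on extracting an $e$-th root of a unit in $\widehat{\Oc}_{X',P}$, which is legitimate in characteristic zero but would obstruct the argument in positive characteristic when the residue characteristic divides $e$. Beyond that, the argument is a bookkeeping exercise balancing the two sources of powers of $u$, namely the $(e-1)$ coming from differentiating $u^e$ and the $e$ coming from the $\omega$-integrality condition on the coefficient $A_{0r}$.
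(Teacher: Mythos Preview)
The paper does not actually prove this theorem; it is stated in Section~\ref{explanation} as part of the general method and attributed to Theorem~3.87 of \cite{thesis} and Lemma~2.10 of \cite{V}, so there is no in-paper proof to compare against.

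Your argument is correct and is the natural one: localize at a general closed point of the ramification divisor, choose adapted coordinates so that $\pi^*x=u^e$, and split $\pi^\bullet\omega$ into the terms with $i\ge 1$ (each of which carries a factor $u^{i(e-1)}$ from $(d(u^e))^i$) and the single term with $i=0$ (whose coefficient $A_{0r}(u^e,g)$ is divisible by $u^e$ because $\omega$-integrality of $C$ forces $x\mid A_{0r}$). The reduction to the formal completion and the extraction of an $e$-th root of a unit there are both unproblematic over $\C$, exactly as you flag. This is precisely the sharpening of Vojta's Lemma~2.10 that the paper emphasizes after the statement: keeping track of the full power $u^{e-1}$ rather than merely $u$.
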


Note that in Vojta's work the conclusion of Lemma 2.10 is that $\pi^\bullet\omega$ vanishes identically along $D$. It was unnecessary in his work to consider higher ramification indices. However, in this work it will be very useful to have $(e-1)D$ instead of $D$ to obtain better bounds in applications.
Since $\pi^\bullet\omega$ vanishes identically along $(e-1)D$, there exists a unique section $\omega'\in H^0(X,\mathcal{O}(-(e-1)D)\otimes\pi^*\mathcal{L}\otimes S^r\Omega^1_{X/\mathbb{C}})$ and such that all $\omega'$-integral curves are $\pi^\bullet\omega$-integral. See Proposition 3.88 in \cite{thesis} or Corollary 2.11 in \cite{V} for details.

%With $\mathcal{F}=\mathcal{L}\otimes S^r\Omega^1_{X/\mathbb{C}}$, and by using the exact sequence $$0\rightarrow \mathcal{O}_X(-D)\rightarrow \mathcal{O}_X\rightarrow i_{Y*}\mathcal{O}_Y\rightarrow 0 $$ on \cite{MLN}, p.\ 63, we obtain the following result, which combined with Theorem \ref{elteo1} allows us to prove that the invertible sheaf $\mathcal{L}'=\mathcal{O}(-(e-1)D)\otimes \pi^*\mathcal{L}$, which behaves better than $\mathcal{L}$, has a section $\pi^\bullet\omega$ making all $\omega'$-integral curves to be $\omega$-integral (see Proposition 3.88 in \cite{thesis} or Corollary 2.11 in \cite{V}).0
 %and a section $\omega'\in H^0(X,\mathcal{L}'\otimes\Omega^1_{X/\mathbb{C}})$ such that all $\omega'$-integral curves are $\omega$-integral 
 
\begin{proposition}\label{laprop2}
Let $X$ be a smooth integral surface and let $\omega\in H^0(X,\mathcal{L}\otimes S^r\Omega^1_{X/\mathbb{C}})$. Let $D$ be an effective divisor on $X$. Suppose that $\omega$ vanishes identically along $D$. Then there is a symmetric differential $\omega'\in H^0(X,\mathcal{O}(-D)\otimes \mathcal{L}\otimes S^r\Omega^1_{X/\mathbb{C}})$ such that all $\omega'$-integral curves are among the $\omega$-integral curves.
\end{proposition}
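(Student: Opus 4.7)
The plan is to exhibit $\omega'$ as the unique preimage of $\omega$ under a natural injection of global sections, and then deduce the integrality implication by naturality of the operation $\varphi_C^\bullet$.

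First, consider the canonical short exact sequence associated to the effective divisor $D$ with subscheme $Y_D$:
$$0 \to \mathcal{O}_X(-D) \to \mathcal{O}_X \to i_{Y_D*}\mathcal{O}_{Y_D} \to 0.$$
Since the surface $X$ is smooth, the sheaf $\mathcal{F} := \mathcal{L}\otimes S^r\Omega^1_{X/\mathbb{C}}$ is locally free, so tensoring the above sequence with $\mathcal{F}$ preserves exactness. Taking global sections yields the left-exact sequence
$$0 \to H^0(X,\mathcal{O}_X(-D)\otimes \mathcal{F}) \to H^0(X,\mathcal{F}) \to H^0(X, \mathcal{F}\otimes i_{Y_D*}\mathcal{O}_{Y_D}).$$
By hypothesis $\omega$ maps to zero in the right-hand group, so there is a unique $\omega' \in H^0(X, \mathcal{O}_X(-D)\otimes \mathcal{L}\otimes S^r\Omega^1_{X/\mathbb{C}})$ whose image under the canonical injection is $\omega$. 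This defines the desired symmetric differential.

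Next, let $C \subseteq X$ be an irreducible $\omega'$-integral curve with normalization $\varphi_C:\tilde{C}\to X$. The construction of $\varphi_C^\bullet$ is natural in the second argument (since it is built from the adjunction unit and the canonical maps $\varphi_C^*\Omega^1_{X/\mathbb{C}}\to \Omega^1_{\tilde{C}/\mathbb{C}}$, which do not depend on $\mathcal{L}$). Applying $\varphi_C^\bullet$ to the inclusion $\mathcal{O}_X(-D)\otimes \mathcal{F} \hookrightarrow \mathcal{F}$ therefore produces a commutative square
$$\begin{CD}
H^0(X, \mathcal{O}_X(-D)\otimes \mathcal{F}) @>>> H^0(X, \mathcal{F}) \\
@V\varphi_C^\bullet VV @VV\varphi_C^\bullet V \\
H^0(\tilde{C}, \varphi_C^*(\mathcal{O}_X(-D)\otimes\mathcal{L})\otimes S^r\Omega^1_{\tilde{C}}) @>>> H^0(\tilde{C}, \varphi_C^*\mathcal{L}\otimes S^r\Omega^1_{\tilde{C}}),
\end{CD}$$
whose bottom row is induced by the natural map $\varphi_C^*\mathcal{O}_X(-D)\to \mathcal{O}_{\tilde{C}}$. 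Chasing $\omega'$: going right then down produces $\varphi_C^\bullet \omega$, while going down then right produces the image of $\varphi_C^\bullet \omega'=0$. Hence $\varphi_C^\bullet\omega=0$, i.e.\ $C$ is $\omega$-integral.

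The argument is entirely formal, so there is no real obstacle. The only point requiring care is the functoriality of $\varphi_C^\bullet$ in the coefficient sheaf, which is immediate from its definition as a composition of pullback and adjunction maps; in particular this naturality holds whether or not $\varphi_C^*\mathcal{O}_X(-D)\to \mathcal{O}_{\tilde{C}}$ is injective (it fails to be injective precisely when $C\subseteq \operatorname{Supp}(D)$, but this does not affect the chase).
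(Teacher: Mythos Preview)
Your argument is correct and is precisely the standard proof: construct $\omega'$ via the exact sequence for $\mathcal{O}_X(-D)$ tensored with the locally free sheaf $\mathcal{L}\otimes S^r\Omega^1_{X/\mathbb{C}}$, then transfer $\omega'$-integrality to $\omega$-integrality by naturality of $\varphi_C^\bullet$. The paper itself does not supply a proof of this proposition; it only cites Proposition~3.88 of \cite{thesis} and Corollary~2.11 of \cite{V}, where exactly this construction is carried out, so your write-up is in line with what those references contain.
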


\section{Preliminary study of the surfaces $X_{n,k}$}

In this section, we will study the varieties $X_{n,k}\subseteq \mathbb{P}^{n-1}$ given by Equations \eqref{xnequation}, and we will describe some useful properties about them. By convention, for any $k\geq 2$ we let $X_{3,k}=\mathbb{P}^2$.

%\begin{lemma}\label{sdxm}
%If $[x_1:\ldots:x_n]\in X_{n,k}$, then we have
%$$\frac{(m-3)(m-2)}{2}x_1^k-((m-2)^2-1)x_2^k+\frac{(m-2)(m-1)}{2}x_3^k = x_m^k,$$
%for any $1\leq m\leq n$.
%\end{lemma} 
%
%\begin{proof}
%Fix $n\geq 4$. We will prove this lemma by induction on $m$. It can be seen that the lemma is true for $m=1,2,3$. We know that in $X_{n,k}$ we have $x_1^k-3x_2^k+3x_3^k=x_4^k$, so the lemma is also true for $m=4$. Now let $4<m\leq n$ and suppose that the statement is true for all $i<m$. From the equation $x_{m-3}^k-3x_{m-2}^k+3x_{m-1}^k=x_m^k$ of \eqref{xnequation} we obtain by the induction hypothesis that
%\begin{eqnarray*}
%x_m^k &=& x_{m-3}^k-3x_{m-2}^k+3x_{m-1}^k\cr
%&=& \left(\frac{(m-6)(m-5)}{2}x_1^k-((m-5)^2-1)x_2^k+\frac{(m-5)(m-4)}{2}x_3^k\right)\cr 
%&&-3\left(\frac{(m-5)(m-4)}{2}x_1^k-((m-4)^2-1)x_2^k+\frac{(m-4)(m-3)}{2}x_3^k\right)\cr 
%&&+3\left(\frac{(m-4)(m-3)}{2}x_1^k-((m-3)^2-1)x_2^k+\frac{(m-3)(m-2)}{2}x_3^k\right)\cr
%&=& \frac{(m-3)(m-2)}{2}x_1^k-((m-2)^2-1)x_2^k+\frac{(m-2)(m-1)}{2}x_3^k.
%\end{eqnarray*}
%\end{proof}

The following result can be proved by induction.

\begin{lemma}\label{tusfi}
Let $f_i= x^k_i-3x^k_{i+1}+3x^k_{i+2}-x^k_{i+3}$ be the generators of the ideal defining $X_{n,k}$, and let
$$
g_i=\frac{i(i+1)}{2}x_1^k-((i+1)^2-1)x_2^k + \frac{(i+1)(i+2)}{2}x_3^k-x_{k+3}^k.
$$
Then we have the equality of ideals $(f_1,\ldots,f_{n-3})=(g_1,\ldots,g_{n-3})$ in $\mathbb{C}[x_1,\ldots,x_n]$. In particular, the $g_i$ for $1\le i\le n-3$ are also defining equations for $X_{n,k}$, and for any $1\leq m\leq n$ the points $[x_1:\ldots:x_n]\in X_{n,k}$ satisfy $$\frac{(m-3)(m-2)}{2}x_1^k-((m-2)^2-1)x_2^k+\frac{(m-2)(m-1)}{2}x_3^k = x_m^k.$$\end{lemma}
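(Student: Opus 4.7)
The plan is to interpret both families of polynomials through a single quadratic interpolation, reducing the claim to an identity about finite differences of polynomials of degree two.

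First I will set $y_i := x_i^k$ and define the quadratic (in $m$) polynomial
\[
q(m) := \frac{(m-2)(m-3)}{2}y_1 - (m-1)(m-3)y_2 + \frac{(m-1)(m-2)}{2}y_3,
\]
which is the Lagrange interpolation through $(1,y_1),(2,y_2),(3,y_3)$, so that $q(1)=y_1$, $q(2)=y_2$, $q(3)=y_3$. Since $(m-1)(m-3)=(m-2)^2-1$, setting $m=i+3$ shows that $g_i = q(i+3) - y_{i+3}$, and the ``moreover'' statement of the lemma is just the tautology $q(m)=y_m$ that holds on $X_{n,k}$ once every $g_i$ vanishes. The tricky-looking formula is therefore nothing but a renaming of $g_{m-3}=0$ (for $m=1,2,3$ it is trivial, as one checks by direct substitution).

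Next I would introduce the ``defect'' $h_j := y_{j+3}-q(j+3)$ for $j\geq 1$, together with the convention $h_0=h_{-1}=h_{-2}=0$ (consistent with $q(1)=y_1$, $q(2)=y_2$, $q(3)=y_3$), so that $g_i=-h_i$. The key observation is that $q$ has degree at most two in $m$, hence its third finite difference vanishes identically:
\[
q(i)-3q(i+1)+3q(i+2)-q(i+3)=0 \qquad \text{for every } i.
\]
Combining this with the definition of $f_i$ yields
\[
f_i = y_i-3y_{i+1}+3y_{i+2}-y_{i+3} = h_{i-3}-3h_{i-2}+3h_{i-1}-h_i,
\]
that is, $f_i = -g_{i-3}+3g_{i-2}-3g_{i-1}+g_i$ under the conventions above. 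This displays each $f_i$ as an explicit $\mathbb{Z}$-linear combination of $g_1,\dots,g_i$, giving the inclusion $(f_1,\dots,f_{n-3})\subseteq(g_1,\dots,g_{n-3})$.

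For the reverse inclusion I would solve the same relation for $g_i$, obtaining the recursion
\[
g_i = f_i + 3g_{i-1} - 3g_{i-2} + g_{i-3},
\]
and prove by induction on $i$ that $g_i\in(f_1,\dots,f_i)$; the base $g_1=f_1$ is immediate from the definitions. Both inclusions together give the equality of ideals, and the final assertion of the lemma follows at once by reading off $g_{m-3}=0$ on $X_{n,k}$. There is no serious obstacle here beyond keeping the index shifts consistent; the only substantive ingredient is the vanishing of third finite differences of a quadratic, and everything else is bookkeeping.
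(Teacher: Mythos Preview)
Your argument is correct and is precisely the induction the paper alludes to (the paper's proof consists of the single sentence ``can be proved by induction''); your Lagrange-interpolation framing and the observation that the third finite difference of a quadratic vanishes make the inductive step completely transparent. The base case $g_1=f_1$ and the recursion $g_i=f_i+3g_{i-1}-3g_{i-2}+g_{i-3}$ are exactly what an inductive proof of the ideal equality needs, so there is nothing to add.
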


The following observation will be useful for proving several smoothness results.

\begin{lemma}\label{sdnozero}
If $[x_1:\cdots:x_n]$ is a point on $X_{n,k}$, then no three of $x_1,\ldots,x_n$ can be simultaneously zero.
\end{lemma}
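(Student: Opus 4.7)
The plan is to reduce the statement to the elementary fact that a polynomial of degree at most two with three distinct roots must vanish identically, using the reformulation of the defining equations provided by Lemma \ref{tusfi}.

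First, I would set $a=x_1^k$, $b=x_2^k$, $c=x_3^k$ and consider the polynomial
$$P(t)=\frac{(t-3)(t-2)}{2}\,a-((t-2)^2-1)\,b+\frac{(t-2)(t-1)}{2}\,c\in\C[t].$$
Directly from the coefficients, $P$ has degree at most $2$ in $t$. By Lemma \ref{tusfi}, any point $[x_1:\cdots:x_n]\in X_{n,k}$ satisfies $x_m^k=P(m)$ for every $1\le m\le n$.

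Next, suppose for contradiction that three of the coordinates vanish, say $x_{i_1}=x_{i_2}=x_{i_3}=0$ with $1\le i_1<i_2<i_3\le n$. Then $P(i_1)=P(i_2)=P(i_3)=0$, so the quadratic $P$ has three distinct roots. Hence $P\equiv 0$, which forces $x_m^k=0$ for every $1\le m\le n$, so all coordinates vanish. This contradicts the fact that $[x_1:\cdots:x_n]$ is a projective point.

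There is no real obstacle here beyond invoking Lemma \ref{tusfi}; the content of the argument is entirely the degree bound on $P$. (As a sanity check, $P(1)=a$, $P(2)=b$, $P(3)=c$, so $P$ interpolates the first three $k$-th powers and the formula from Lemma \ref{tusfi} determines all remaining ones.)
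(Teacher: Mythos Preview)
Your proof is correct and is essentially the same approach as the paper's: the paper's proof consists of the single line ``This follows from the last statement in Lemma \ref{tusfi},'' and your argument is precisely the unpacking of that sentence via the quadratic interpolation polynomial $P(t)$.
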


\begin{proof}
This follows from the last statement in Lemma \ref{tusfi}. %If $x_1,x_2,x_3$ are all zero, then by Lemma \ref{tusfi} every $x_i$ is equal to zero, which contradicts the fact that $[x_1:\cdots:x_n]\in\mathbb{P}^{n-1}$. 
%Now view $$\frac{(j-3)(j-2)}{2}x_1^k-((j-2)^2-1)x_2^k+\frac{(j-1)(j-2)}{2}x_3^k=0$$ as an equation in $j$. Note that for $j=1$ this gives $x_1^k=0$, for $j=2$ this gives $x_2^k=0$ and for $j=3$ it gives $x_3^k=0$. This equation can be written in the form 
%\begin{eqnarray}\label{aaaa}
%(x_1^k-2x_2^k+x_3^k)j^2-(-5x_1^k+8x_2^k-3x_3^k)j+(6x_1^k-6x_2^k+2x_3^k)&=&0.
%\end{eqnarray}
%If all $x_1^k-2x_2^k+x_3^k$, $-5x_1^k+8x_2^k-3x_3^k$ and $6x_1^k-6x_2^k+2x_3^k$ are zero, then it can be computed that $x_1=x_2=x_3=0$, which is not possible. Hence Equation \eqref{aaaa} has at most two solutions in $j$. Therefore there are at most two values of $j$ such that $x_j=0$.
\end{proof}

We now will study some morphisms between the surfaces $X_{n,k}$.
For $n\geq 4$, let 
%\begin{eqnarray*}
%\tilde{\pi}_n:\mathbb{P}^{n-1}\backslash\left\{[0:\cdots:0:1]\right\}&\to&\mathbb{P}^{n-2}\cr
%[x_1:\cdots :x_{n}]&\mapsto&[x_1:\cdots:x_{n-1}].
%\end{eqnarray*}
%be the rational map corresponding to the inclusion $\mathbb{C}[x_1,\ldots,x_{n-1}]\to \mathbb{C}[x_1,\ldots,x_n]$. We then let 
$$\pi_n:X_{n,k}\to X_{n-1,k}$$ be the restriction of the linear projection 
\begin{eqnarray*}
\tilde{\pi}_n:\mathbb{P}^{n-1}\backslash\{[0:\ldots:0:1]\} &\to& \mathbb{P}^{n-2}\cr
[x_1:\ldots:x_{n}] &\mapsto & [x_1:\ldots:x_{n-1}]
\end{eqnarray*}
%$\tilde{\pi}_n$ to $X_{n,k}$, that is, the map corresponding to 
%$$\mathbb{C}[x_1,\ldots,x_{n-1}]/(f_1,\ldots,f_{n-4})\to \mathbb{C}[x_1,\ldots,x_n]/(f_1,\ldots,f_{n-3}),$$ 
%which exists because $(f_1,\ldots,f_{n-4})\subseteq \mathbb{C}[x_1,\ldots,x_{n-1}]\cap (f_1,\ldots,f_{n-3})$. %Therefore $\pi_n(X_{n,k})\subseteq X_{n-1,k}$. 
%For each $n\geq 4$, define the map $\pi_n:X_{n,k}\to X_{n-1,k}$ as the restriction to $X_{n,k}$ of the morphism
%\begin{eqnarray*}
%\tilde{\pi}_n:\mathbb{P}^{n-1}\backslash\left\{[0:\cdots:0:1]\right\}&\to&\mathbb{P}^{n-2}\cr
%[x_1:\cdots :x_{n}]&\mapsto&[x_1:\cdots:x_{n-1}].
%\end{eqnarray*}
%The rational map $\tilde{\pi}_n$ corresponds to the inclusion map $\mathbb{C}[x_1,\ldots,x_{n-1}]\to \mathbb{C}[x_1,\ldots,x_n]$ (which respects the grading) in the sense of \cite{H} II, Exercise 2.14(b), and the morphism $\pi_n$ corresponds to the induced map $$\mathbb{C}[x_1,\ldots,x_{n-1}]/(f_1,\ldots,f_{n-4})\to \mathbb{C}[x_1,\ldots,x_n]/(f_1,\ldots,f_{n-3}),$$ which exists because $(f_1,\ldots,f_{n-4})\subseteq \mathbb{C}[x_1,\ldots,x_{n-1}]\cap (f_1,\ldots,f_{n-3})$. Therefore $\pi_n(X_{n,k})\subseteq X_{n-1,k}$. 

\begin{proposition}\label{rir}
For each $n$, the map $\pi_n$ is a finite surjective morphism of degree $k$, ramified only at the components of $R_n=\mathrm{div}_{X_{n,k}}(x_n)$. %with ramification curve $R_n:=\left\{x_n=0\right\}\cap X_{n,k}$.
\end{proposition}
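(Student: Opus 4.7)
The plan is to extract the full fiber structure of $\pi_n$ from a single application of Lemma \ref{tusfi}, and then read off finiteness, degree, surjectivity, and the ramification locus as immediate consequences. The only real input is the explicit formula for $x_n^k$ in terms of $x_1, x_2, x_3$ given by that lemma.

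First I would check that $\pi_n$ is a well-defined morphism landing in $X_{n-1,k}$. The center of the linear projection $\tilde{\pi}_n$ is the point $[0:\ldots:0:1]\in\mathbb{P}^{n-1}$, which has $n-1\geq 3$ vanishing coordinates (recall $n\geq 4$), so by Lemma \ref{sdnozero} it does not lie on $X_{n,k}$; hence $\tilde{\pi}_n$ restricts to a morphism on $X_{n,k}$. The image lies in $X_{n-1,k}$ because the defining equations of $X_{n-1,k}$ form a subset of those of $X_{n,k}$. In particular $\pi_n$ is a projective morphism of projective varieties.

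Next, applying Lemma \ref{tusfi} with $m=n$ gives, on $X_{n,k}$, the identity
$$x_n^k = h(x_1,x_2,x_3) := \tfrac{(n-3)(n-2)}{2}x_1^k - ((n-2)^2-1)x_2^k + \tfrac{(n-2)(n-1)}{2}x_3^k.$$
So for any chosen representative of a point $P=[x_1:\cdots:x_{n-1}]\in X_{n-1,k}$, the fiber $\pi_n^{-1}(P)$ consists exactly of the points $[x_1:\cdots:x_{n-1}:x_n]$ where $x_n$ is a $k$-th root of the scalar $h(P)\in\mathbb{C}$. This set is always nonempty and has at most $k$ elements, with cardinality exactly $k$ when $h(P)\neq 0$.

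From this description the rest is formal. Surjectivity and finiteness of fibers are immediate, and since $\pi_n$ is projective this upgrades to a finite morphism; counting a generic fiber then yields $\deg \pi_n = k$. The map $\pi_n$ fails to be \'etale at a fiber point exactly when the polynomial $T^k - h(P)$ has a multiple root, i.e.\ when $h(P)=0$, and upstairs this locus coincides with $\{x_n=0\}\cap X_{n,k}=R_n$ by the identity above; at any such point the local equation $T^k=0$ gives ramification index $k$. The only step requiring any thought is the fiber analysis itself, which is precisely what Lemma \ref{tusfi} makes transparent by reducing everything to one monic equation in $x_n$.
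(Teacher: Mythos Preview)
Your argument is correct and follows essentially the same route as the paper: both proofs use Lemma \ref{tusfi} to reduce the fiber over $P$ to the equation $x_n^k = h(P)$, from which well-definedness, surjectivity, finiteness of degree $k$, and the identification of the ramification locus with $R_n=\{x_n=0\}$ all follow. Your use of Lemma \ref{sdnozero} to exclude the center of projection is equivalent to the paper's direct appeal to Lemma \ref{tusfi}, and your ramification analysis via the discriminant of $T^k-h(P)$ is a slightly more explicit phrasing of the same observation.
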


\begin{proof}
If $\pi_n([x_1:\cdots:x_n])$ is undefined, then we get $x_1=\cdots =x_{n-1}=0$. By Lemma \ref{tusfi} we obtain that $x_n=0$, contradicting the fact that $[x_1:\cdots :x_n]\in\mathbb{P}^{n-1}$. Therefore $\pi_n$ is a morphism.

Now let $P=[x_1:\cdots:x_{n-1}]\in X_{n-1}$, and let $\tilde{P}:=[x_1:\ldots:x_n]\in\mathbb{P}^{n-1}$ be a preimage of $P$ with respect to $\tilde{\pi}_n$. Then $\tilde{P}$ lies on $X_{n,k}$ if and only if $$x_n^k=\frac{(n-2)(n-1)}{2}x_3^k-((n-2)^2-1)x_2^k+\frac{(n-3)(n-2)}{2}x_1^k,$$
by Lemma \ref{tusfi}. Since we can always solve this, we have that $\pi_n$ is a surjective quasi-finite morphism. Moreover, it is finite of degree $k$ because it is projective. 
%It is of degree $k$ by Lemma \ref{ordg}.QQQQ

The curve $$\frac{(n-2)(n-1)}{2}x_3^k-((n-2)^2-1)x_2^k+\frac{(n-3)(n-2)}{2}x_1^k=0$$ in $\mathbb{P}^2$ is irreducible when $n\neq 1,2,3$. Since each point on this curve has only one preimage under $\tilde{\pi}_n$ in $X_{n,k}$, it follows from Lemma \ref{tusfi} that $\pi_n$ is totally ramified at each component of $R_n$. %From Lemma \ref{sdxm} we have that the pullback of this curve on $X_{n,k}$ has equation $x^k_n=0$.
\end{proof}

We have a tower of finite morphisms:
$$\mathbb{P}^2= X_{3,k}\stackrel{\pi_4}{\leftarrow}X_{4,k}\stackrel{\pi_5}{\leftarrow}X_{5,k}\stackrel{\pi_6}\leftarrow\cdots$$
Define $\rho_{n,k}=\pi_4\circ\cdots\circ\pi_n$. This morphism is finite of degree $k^{n-3}$.
Note that by Lemma \ref{tusfi}, the image under $\rho_{n,k}$ of $R_n$ in the surface $X_{3,k}=\mathbb{P}^2$ is $$C_n:\frac{(n-3)(n-2)}{2}x_1^k-((n-2)^2-1)x_2^k+\frac{(n-2)(n-1)}{2}x_3^k=0,$$
so $\rho_{n,k}(R_n)=C_n$.
%Denote by $U_i=\left\{[x_1:\ldots:x_n]\in \mathbb{P}^{n-1}:x_i\neq 0\right\}$ the affine open covering of $\mathbb{P}^n$. 

\begin{lemma}\label{lacosa}
For $n\geq 4$ we have $\rho_{n,k}^*(C_n)=kR_n$.
\end{lemma}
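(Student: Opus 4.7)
The plan is to reduce the statement to the identity already established in Lemma \ref{tusfi}, viewing both $C_n$ and $R_n$ as Cartier divisors of zeros of explicit sections of appropriate line bundles, and then compute the pullback at the level of sections rather than at the level of support.

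First I would interpret the divisors sheaf-theoretically. The curve $C_n \subset \mathbb{P}^2$ is the divisor of zeros of the section
$$\sigma_n := \tfrac{(n-3)(n-2)}{2}x_1^k - ((n-2)^2-1)x_2^k + \tfrac{(n-2)(n-1)}{2}x_3^k \in H^0(\mathbb{P}^2, \mathcal{O}_{\mathbb{P}^2}(k)),$$
while $R_n$ is the divisor of zeros of the section $x_n \in H^0(X_{n,k}, \mathcal{O}_{X_{n,k}}(1))$, so that $kR_n$ is the divisor of zeros of $x_n^k$ regarded as a section of $\mathcal{O}_{X_{n,k}}(k)$.

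Next, since $\rho_{n,k}$ is the restriction to $X_{n,k}$ of the linear projection $[x_1:\cdots:x_n]\mapsto [x_1:x_2:x_3]$, the pullback line bundle satisfies $\rho_{n,k}^*\mathcal{O}_{\mathbb{P}^2}(k) \cong \mathcal{O}_{X_{n,k}}(k)$, and the pullback $\rho_{n,k}^*\sigma_n$ is given by the same polynomial expression in $x_1,x_2,x_3$, now viewed as a section of $\mathcal{O}_{X_{n,k}}(k)$. Applying Lemma \ref{tusfi} with $m = n$ yields the identity
$$\tfrac{(n-3)(n-2)}{2}x_1^k - ((n-2)^2-1)x_2^k + \tfrac{(n-2)(n-1)}{2}x_3^k = x_n^k$$
on $X_{n,k}$, so the two sections $\rho_{n,k}^*\sigma_n$ and $x_n^k$ of $\mathcal{O}_{X_{n,k}}(k)$ coincide.

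Since the divisor of zeros of a section is well-defined, I conclude $\rho_{n,k}^*(C_n) = \mathrm{div}_{X_{n,k}}(\rho_{n,k}^*\sigma_n) = \mathrm{div}_{X_{n,k}}(x_n^k) = k\,R_n$. There is no genuine obstacle here; the only subtle point is to carry out the argument at the level of sections of $\mathcal{O}_{X_{n,k}}(k)$ so that the correct multiplicity $k$ appears automatically, rather than working with the set-theoretic supports (which would only give the equality up to multiplicity and require separately invoking the total ramification of $\pi_n$ along $R_n$ from Proposition \ref{rir}).
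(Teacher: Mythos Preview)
Your proof is correct and follows essentially the same approach as the paper's: both identify $C_n$ as the divisor of the degree-$k$ form $\sigma_n$, pull it back via $\rho_{n,k}$, invoke Lemma~\ref{tusfi} to rewrite it as $x_n^k$ on $X_{n,k}$, and conclude that its divisor is $kR_n$. Your version is slightly more explicit about the sheaf-theoretic bookkeeping, but the argument is the same.
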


\begin{proof}
Recall that $$C_n=\mathrm{div}_{X_{3,k}}\left(\frac{(n-3)(n-2)}{2}x_1^k-((n-2)^2-1)x_2^k+\frac{(n-2)(n-1)}{2}x_3^k\right).$$ Then by Lemma \ref{tusfi} we have \begin{eqnarray*}\rho_{n,k}^*(C_n)&=&\mathrm{div}_{X_{n,k}}\left(\frac{(n-3)(n-2)}{2}x_1^k-((n-2)^2-1)x_2^k+\frac{(n-2)(n-1)}{2}x_3^k\right)\cr &=&\mathrm{div}_{X_{n,k}}(x_n^k)=kR_n.\end{eqnarray*}
\end{proof}

\begin{lemma}\label{surfa}
Each irreducible component of $X_{n,k}$ has dimension two.
\end{lemma}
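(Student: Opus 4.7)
The plan is to get the lower bound on dimension from the number of defining equations, and the upper bound by induction using the finite projections $\pi_n$ from Proposition \ref{rir}.

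For the lower bound, observe that $X_{n,k}\subseteq \mathbb{P}^{n-1}$ is cut out by the $n-3$ homogeneous equations $f_i=0$ from Lemma \ref{tusfi}. By the projective dimension theorem (Krull's Hauptidealsatz applied iteratively), every irreducible component of $X_{n,k}$ has dimension at least $(n-1)-(n-3)=2$.

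For the upper bound I would argue by induction on $n$. The base case $n=3$ is trivial since $X_{3,k}=\mathbb{P}^2$ by convention. For the inductive step, suppose every irreducible component of $X_{n-1,k}$ has dimension $2$. Let $Z$ be an irreducible component of $X_{n,k}$. By Proposition \ref{rir} the morphism $\pi_n:X_{n,k}\to X_{n-1,k}$ is finite, so its restriction to the closed subset $Z$ is also finite (a finite morphism restricted to a closed subscheme remains finite), and in particular closed. Hence $\pi_n(Z)$ is an irreducible closed subset of $X_{n-1,k}$, so it lies inside some irreducible component of $X_{n-1,k}$, and by the inductive hypothesis $\dim \pi_n(Z)\leq 2$. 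Because finite morphisms preserve dimension, $\dim Z=\dim \pi_n(Z)\leq 2$.

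Combining the two inequalities gives $\dim Z=2$ for every irreducible component $Z$, which is the desired conclusion. There is no real obstacle here; the only point to be careful about is that Proposition \ref{rir} has already established finiteness of $\pi_n$ using Lemma \ref{tusfi}, so the induction machinery is available without any circularity.
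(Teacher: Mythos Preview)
Your proof is correct and follows essentially the same approach as the paper: the lower bound comes from the number of defining equations, and the upper bound from finiteness of the projection to $\mathbb{P}^2$. The only cosmetic difference is that the paper applies the finite morphism argument directly to the composite $\rho_{n,k}=\pi_4\circ\cdots\circ\pi_n$ rather than inducting along the individual $\pi_n$, but this amounts to the same thing.
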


\begin{proof}
By Theorem I.7.2 in \cite{H}, since $X_{n,k}$ is the intersection of $n-3$ hypersurfaces in $\mathbb{P}^{n-1}$, we obtain that each irreducible component of $X_{n,k}$ has dimension greater than or equal to two. By Lemma \ref{lacosa}, the morphism $\rho_{n,k}$ is finite and surjective, so each irreducible component of $X_{n,k}$ has dimension at most two.
\end{proof}

%The following observation will allow us to prove that the surfaces $X_{n,k}$ are smooth.
%
%\begin{observation}\label{sdmatrices} Let $\alpha\neq \beta$ be different from $1,2,3$. Then the matrix
%$$\left(\begin{array}{cc}
%{(\alpha-3)(\alpha-2)}x_1^{k-1} & 2((\alpha-2)^2-1)x_2^{k-1}\cr
%{(\beta-3)(\beta-2)}x_1^{k-1} & 2((\beta-2)^2-1)x_2^{k-1}
%\end{array}\right)$$
%has determinant $2{x_1^{k-1}x_2^{k-1}}(\alpha-3)(\beta-3)(\alpha-\beta)\neq 0$, if $x_1x_2\neq 0$. The matrix 
%$$\left(\begin{array}{cc}
%{(\alpha-3)(\alpha-2)}x_1^{k-1} & {(\alpha-2)(\alpha-1)}x_3^{k-1}\cr
%{(\beta-3)(\beta-2)}x_1^{k-1} & {(\beta-2)(\beta-1)}x_3^{k-1}
%\end{array}\right)$$
%has determinant $2{x_1^{k-1}x_3^{k-1}}(\alpha-2)(\beta-2)(\alpha-\beta)\neq 0$, if $x_1x_3\neq 0$; and the matrix
%$$\left(\begin{array}{cc}
%2((\alpha-2)^2-1)x_2^{k-1} & {(\alpha-2)(\alpha-1)}x_3^{k-1}\cr
%2((\beta-2)^2-1)x_2 ^{k-1} & {(\beta-2)(\beta-1)}x_3^{k-1}
%\end{array}\right)$$
%has determinant $2{x_2^{k-1}x_3^{k-1}}(\alpha-1)(\beta-2)(\alpha-\beta)\neq 0$, if $x_2x_3\neq 0$.
%\end{observation}

\begin{lemma}\label{sddsmooth}
For each $n\geq 3$, the variety $X_{n,k}$ is smooth.
\end{lemma}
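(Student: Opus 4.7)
The plan is to apply the Jacobian criterion using the alternative defining equations $g_1,\ldots,g_{n-3}$ from Lemma \ref{tusfi}. Their decisive advantage over the original $f_i$ is that each $g_i$ involves only $x_1, x_2, x_3$ and the single extra variable $x_{i+3}$. A direct computation gives $\partial g_i/\partial x_{i+3}=-kx_{i+3}^{k-1}$, $\partial g_i/\partial x_1=\frac{i(i+1)}{2}kx_1^{k-1}$, $\partial g_i/\partial x_2=-i(i+2)kx_2^{k-1}$, $\partial g_i/\partial x_3=\frac{(i+1)(i+2)}{2}kx_3^{k-1}$, and all other partials zero. Therefore the Jacobian matrix takes the block form $J=[A\mid D]$, where $A$ is $(n-3)\times 3$ and $D$ is the $(n-3)\times(n-3)$ diagonal matrix with entries $-kx_j^{k-1}$ for $j=4,\ldots,n$. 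By Lemma \ref{surfa} every component of $X_{n,k}$ has dimension two, so smoothness at a point $[x_1:\cdots:x_n]$ is equivalent to $\mathrm{rank}\,J=n-3$ there.

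Next I would run a short case analysis on $S=\{i\in\{1,\ldots,n-3\}:x_{i+3}=0\}$. Lemma \ref{sdnozero} forces the total number of vanishing coordinates to be at most two, so $|S|\le 2$. If $|S|=0$, then $D$ is already invertible and the full rank is immediate. If $|S|=\{i_0\}$, the rows indexed by $\{1,\ldots,n-3\}\setminus\{i_0\}$ contribute rank $n-4$ via the non-vanishing diagonal entries of $D$, and one only has to check that the row $i_0$ restricted to columns $1,2,3$ is non-zero. By Lemma \ref{sdnozero} at most one of $x_1,x_2,x_3$ can vanish (since $x_{i_0+3}$ already does), and the coefficients $\frac{i_0(i_0+1)}{2}$, $i_0(i_0+2)$, $\frac{(i_0+1)(i_0+2)}{2}$ are all non-zero for $i_0\ge 1$, so this row is non-zero.

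The remaining case is $|S|=2$, in which the two vanishing coordinates are both among $x_4,\ldots,x_n$, forcing $x_1,x_2,x_3$ all non-zero. Writing $S=\{i_1,i_2\}$ with $i_1<i_2$, the rows outside $S$ give rank $n-5$, and one must show that the $2\times 3$ submatrix consisting of rows $i_1,i_2$ restricted to columns $1,2,3$ has rank two. Rescaling column $j$ by the non-zero factor $kx_j^{k-1}$, this reduces to the linear independence of the pair of vectors $v_i:=\bigl(\tfrac{i(i+1)}{2},\,-i(i+2),\,\tfrac{(i+1)(i+2)}{2}\bigr)$ for $i=i_1,i_2$. Proportionality $v_{i_2}=\lambda v_{i_1}$ would force $i_2/i_1=(i_2+2)/(i_1+2)$ by comparing first and third coordinates, hence $i_1=i_2$, a contradiction. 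This completes the verification.

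The argument presents no serious obstacle — once one replaces the $f_i$ by the $g_i$ so that the Jacobian splits as $[A\mid D]$ with $D$ diagonal, everything reduces to bookkeeping and a one-line linear-algebra check. The mild subtlety is remembering to apply Lemma \ref{sdnozero} in the two places it is needed: to bound $|S|$ and, in the case $|S|=1$, to ensure that two of $x_1,x_2,x_3$ survive.
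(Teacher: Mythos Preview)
Your proof is correct and follows essentially the same approach as the paper: apply the Jacobian criterion to the alternative generators $g_i$ of Lemma~\ref{tusfi} and use Lemma~\ref{sdnozero} to control the vanishing coordinates. The only difference is organizational---the paper argues by induction on the number of rows, whereas you exploit the block form $[A\mid D]$ with $D$ diagonal and run a clean case analysis on $|S|$; the latter is arguably tidier, but the substance is the same.
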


\begin{proof}
Since $X_{3,k}\cong\mathbb{P}^2$ we know that $X_{3,k}$ is smooth. Now let $n\geq 4$. %By Lemma \ref{tusfi}, we have that $X_{n,k}$ is defined by the equations $g_1,\ldots,g_{n-3}$. %From Proposition \ref{sdjacob},
By Ex.4.2.10 in \cite{Liu}, we only have to show that the Jacobian matrix of the homogeneous equations $g_1,\ldots,g_{n-3}$ of $X_{n,k}$ has maximal rank when evaluated at the point $[x_1:\ldots:x_n]\in X_{n,k}$.  This is the following $(n-3)\times n$ matrix
$$\frac{k}{2}\left(\begin{smallmatrix}%\left(\begin{array}{ccccccc}
2x_1^{k-1} & -6x_2^{k-1} & 6x_3^{k-1} & -2x_4^{k-1} & 0 & \cdots & 0 \cr
6x_1^{k-1} & -16x_2^{k-1} & 12x_3^{k-1} & 0 & -2x_5^{k-1} & \ddots & \vdots \cr
\vdots & \vdots & \vdots & \vdots & \ddots & \ddots & 0\cr
{(n-3)(n-2)}x_1^{k-1} & 2((n-2)^2-1)x_2^{k-1} & {(n-2)(n-1)}x_3^{k-1} & 0 & \cdots & 0 & -2x_n^{k-1}
\end{smallmatrix}\right)$$ %\end{array}$$
%This is proved by induction on $n$ by using Lemma \ref{sdnozero}.
We will prove by induction that this matrix has maximal rank equal to $n-3$.  Suppose that the following $(i-3)\times i$ matrix with $4\leq i\leq n-1$ has maximal rank.
$$M_i=\frac{k}{2}\left(\begin{smallmatrix} %\begin{array}{ccccccc}2x_1^{k-1} & -6x_2^{k-1} & 6x_3^{k-1} & -2x_4^{k-1} & 0 & \cdots & 0 \cr
6x_1^{k-1} & -16x_2^{k-1} & 12x_3^{k-1} & 0 & -2x_5^{k-1} & \ddots & \vdots \cr
\vdots & \vdots & \vdots & \vdots & \ddots & \ddots & 0\cr
{(i-3)(i-2)}x_1^{k-1} & 2((i-2)^2-1)x_2^{k-1} & {(i-2)(i-1)}x_3^{k-1} & 0 & \cdots & 0 & -2x_i^{k-1}
\end{smallmatrix}\right)$$%\end{array$
and consider the $(i-2)\times (i+1)$ matrix
$$M_{i+1}=\frac{k}{2}\left(\begin{smallmatrix} %\begin{array}{ccccccc}
2x_1^{k-1} & -6x_2^{k-1} & 6x_3^{k-1} & -2x_4^{k-1} & 0 & \cdots & 0 \cr
6x_1^{k-1} & -16x_2^{k-1} & 12x_3^{k-1} & 0 & -2x_5^{k-1} & \ddots & \vdots \cr
\vdots & \vdots & \vdots & \vdots & \ddots & \ddots & 0\cr
{(i-2)(i-1)}x_1^{k-1} & 2((i-1)^2-1)x_2^{k-1} & {(i-1)(i)}x_3^{k-1} & 0 & \cdots & 0 & -2x_{i+1}^{k-1}
\end{smallmatrix}\right).$$ %\end{array}\right)$$
If $x_{i+1}\neq 0$ then the matrix $M_{i+1}$ has rank $M_i +1=i-2$. 

Now suppose that $x_{i+1}=0$. By Lemma \ref{sdnozero}, at most one among $x_1,\ldots, x_i$ can be zero. If all $x_j$ with $4\leq j\leq i$ are non-zero, then we are done. If $x_j=0$ for some $4\leq j\leq i$, then we only have to prove that the $(j-3)$-th row is not a multiple of the $(i-2)$-nd row. By Lemma \ref{sdnozero} we have that none of $x_1,x_2,x_3$ is zero. Then the $j$-th row is not a multiple of the first row by analyzing their entries in the first three columns, thus the matrix $M_{i+1}$ has maximal rank $i-2$. 

We know from Lemma \ref{sdnozero} that for $i=4$ the matrix $M_i$ has maximal rank (equal to $1$), therefore the Jacobian matrix of $X_{n,k}$ has maximal rank and hence $X_{n,k}$ is smooth.
\end{proof}

\begin{proposition}\label{sdgentype} For $n\geq 3$, the variety $X_{n,k}$ is a smooth and irreducible surface. It is a complete intersection. The canonical sheaf of $X_{n,k}$ is $\mathcal{O}(k(n-3)-n)$, so $X_{n,k}$ is of general type for $n> \frac{3k}{k-1}$.
\end{proposition}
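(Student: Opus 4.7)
The plan is to assemble the claim from pieces already established together with one standard connectedness result. Smoothness is Lemma \ref{sddsmooth}, and every irreducible component has dimension $2$ by Lemma \ref{surfa}. Since $X_{n,k}$ is cut out by exactly $n-3$ equations in $\mathbb{P}^{n-1}$ and its components have the expected codimension $n-3$, it is a complete intersection in the scheme-theoretic sense, so I would first record that observation.

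The only nontrivial point is irreducibility. I would deduce it from connectedness, using the standard fact that a complete intersection of positive dimension in $\mathbb{P}^N$ is connected (for a smooth complete intersection this follows, for instance, from the Koszul resolution computation giving $h^0(X_{n,k},\Oc_{X_{n,k}})=1$, or equivalently from the Lefschetz hyperplane theorem applied inductively). Once $X_{n,k}$ is known to be smooth and connected, it is automatically irreducible. As an alternative I could run an induction using the finite degree-$k$ morphism $\pi_n:X_{n,k}\to X_{n-1,k}$ from Proposition \ref{rir}: assuming $X_{n-1,k}$ is irreducible, the ramification along $R_n$ (where $\pi_n$ is totally ramified, again by Proposition \ref{rir}) prevents $X_{n,k}$ from splitting into a disjoint union of sheets, so it remains connected and, being smooth, irreducible. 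The main obstacle I anticipate is exactly this step, and I would favor the Koszul/connectedness argument for brevity.

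Given that $X_{n,k}$ is a smooth complete intersection of $n-3$ hypersurfaces each of degree $k$ in $\mathbb{P}^{n-1}$, the adjunction formula for complete intersections immediately yields
\[
\omega_{X_{n,k}} \;\cong\; \Oc_{\mathbb{P}^{n-1}}\!\Bigl(\textstyle\sum_{i=1}^{n-3} k - n\Bigr)\Bigr|_{X_{n,k}} \;=\; \Oc_{X_{n,k}}\bigl(k(n-3)-n\bigr),
\]
which is the stated formula for the canonical sheaf.

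Finally, for the general type assertion, I would argue that since $\Oc_{X_{n,k}}(1)$ is ample (being the restriction of $\Oc_{\mathbb{P}^{n-1}}(1)$), the canonical sheaf $\Oc_{X_{n,k}}(k(n-3)-n)$ is ample, and in particular big, exactly when $k(n-3)-n>0$. Solving $kn-3k-n>0$ gives $n(k-1)>3k$, i.e., $n>\tfrac{3k}{k-1}$, and under this inequality $X_{n,k}$ is a smooth projective surface with big canonical sheaf, hence of general type. This closes the proof.
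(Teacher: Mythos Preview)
Your proposal is correct and follows essentially the same approach as the paper: smoothness from Lemma \ref{sddsmooth}, dimension two from Lemma \ref{surfa}, connectedness of positive-dimensional complete intersections (the paper cites \cite{H}, Ex.~II.8.4(c)) plus smoothness to get irreducibility, and the adjunction formula for complete intersections (\cite{H}, Ex.~II.8.4(e)) for the canonical sheaf. Your explicit derivation of the inequality $n>\tfrac{3k}{k-1}$ and the alternative induction via $\pi_n$ are additions, but the core argument is the same.
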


\begin{proof} Since $X_{n,k}$ is a smooth complete intersection by Lemma \ref{sddsmooth}, we get from \cite{H}, Ex.II.8.4(c), that $X_{n,k}$ is connected, and since it is smooth we obtain that $X_{n,k}$ is irreducible. It is a surface by Lemma \ref{surfa}. From \cite{H}, Ex.II.8.4(e) we have that the canonical sheaf of $X_{n,k}$ is $\mathcal{O}(k(n-3)-n)$. %By Example II.7.6.1 in \cite{H}, the sheaf $\mathcal{O}(k(n-3)-n)$ is ample for $n> \frac{3k}{k-1}$, thus a multiple of $\mathcal{O}(k(n-3)-n)$ determines an embedding $X_{n,k}\to\mathbb{P}^N$, and hence $X_{n,k}$ is a surface of general type, by Theorem V.6.5 in \cite{H}.
\end{proof}

\section{A suitable symmetric differential on $X_{3,k}$}

Let $\left\{U_i\right\}\subseteq \mathbb{P}^2$ be the usual affine open cover of $X_{3,k}=\mathbb{P}^2$ with $U_i=D_+(X_i)$.
Working on the open set $U_3$ with affine coordinates $x_1=\frac{X_1}{X_3}$, $x_2=\frac{X_2}{X_3}$, let us introduce the following symmetric differential 
$$x_1^{2k-2}x_2^2dx_1dx_1+(x_1^{k-1}x_2-4x_1^{k-1}x_2^{k+1}-x_1^{2k-1}x_2)dx_1dx_2+4x_1^kx_2^kdx_2dx_2\in S^2\Omega^1_{U_3/\mathbb{C}}.$$

\begin{proposition}
The previous symmetric differential form in $U_3$ can be extended to a form $$\omega\in H^0(\mathbb{P}^2,\mathcal{O}(2k+3)\otimes S^2\Omega^1_{\mathbb{P}^2/\mathbb{C}}).$$
\end{proposition}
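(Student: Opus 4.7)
The plan is to verify directly that $\omega_3$ glues, via the standard transition rules for $\mathcal{O}(2k+3)$, to regular symmetric $2$-forms on the remaining standard affine charts $U_1 = D_+(X_1)$ and $U_2 = D_+(X_2)$; since $U_1\cup U_2\cup U_3 = \mathbb{P}^2$, this produces the desired global section $\omega$.

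On $U_1$, I would use coordinates $y_1 = X_2/X_1$, $y_2 = X_3/X_1$, under which $x_1 = 1/y_2$, $x_2 = y_1/y_2$, $dx_1 = -dy_2/y_2^2$, and $dx_2 = (y_2\,dy_1 - y_1\,dy_2)/y_2^2$. Because $\mathcal{O}(2k+3)$ is trivialized on $U_3$ by $X_3^{2k+3}$, on $U_1$ by $X_1^{2k+3}$, and $X_3/X_1 = y_2$, the extension problem reduces to showing that $y_2^{2k+3}\omega_3$, expressed in $(y_1,y_2)$, is polynomial. After substitution and clearing to the common denominator $y_2^{2k+4}$, the numerator splits as $N = N_1\,dy_1^2 + N_2\,dy_1\,dy_2 + N_3\,dy_2^2$, and the task reduces to proving $y_2 \mid N_i$ for each $i$.

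A direct expansion shows that the coefficients in $\omega_3$ were chosen to force these divisibilities through explicit cancellations. The $dy_1^2$ piece is produced only by $4x_1^k x_2^k\,dx_2^2$ and equals $N_1 = 4y_1^k y_2^2$; the $dy_1\,dy_2$ piece simplifies to $N_2 = y_1 y_2(1 - y_2^k - 4 y_1^k)$; and in the $dy_2^2$ piece, the contribution $y_1^2$ from $x_1^{2k-2}x_2^2\,dx_1^2$ cancels with $-y_1^2$ coming from $-x_1^{2k-1}x_2\,dx_1\,dx_2$, while $-4 y_1^{k+2}$ from $-4x_1^{k-1}x_2^{k+1}\,dx_1\,dx_2$ cancels $+4y_1^{k+2}$ from $4x_1^k x_2^k\,dx_2^2$, leaving only $N_3 = y_1^2 y_2^k$. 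Each $N_i$ is divisible by $y_2$ (using $k\geq 2$), so $y_2^{2k+3}\omega_3$ is polynomial and the section extends to $U_1$.

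The verification on $U_2$ is entirely analogous. With coordinates $z_1 = X_1/X_2$, $z_2 = X_3/X_2$ one has $x_1 = z_1/z_2$, $x_2 = 1/z_2$, and the same cancellation pattern yields $dz_1^2$-coefficient $z_1^{2k-2}z_2^2$, $dz_1\,dz_2$-coefficient $z_1^{k-1}z_2(4 - z_1^k - z_2^k)$, and $dz_2^2$-coefficient $z_1^k z_2^k$, each divisible by $z_2$, completing the gluing. The main obstacle is simply the coefficient bookkeeping: a generic symmetric $2$-form with the same maximal monomial degree $2k$ in its coefficients would extend only after twisting by $\mathcal{O}(2k+4)$, so the improvement to $\mathcal{O}(2k+3)$ hinges precisely on the constants $1, -4, -1, 4$ appearing in $\omega_3$ being perfectly arranged to kill the leading pole along $\{X_3=0\}$.
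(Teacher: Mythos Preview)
Your proof is correct and follows essentially the same approach as the paper: both compute the form explicitly in the remaining two standard affine charts $U_1$ and $U_2$ and check that, after the twist by the transition function $(X_3/X_i)^{2k+3}$, the result is polynomial. Your coefficients agree with the paper's (in fact your $dz_2^2$ coefficient $z_1^k z_2^{k-1}$ corrects what appears to be a harmless typo in the paper's $U_2$ expression), and your added remarks on which monomials cancel and why the twist $\mathcal{O}(2k+3)$ rather than $\mathcal{O}(2k+4)$ suffices are accurate but do not change the argument.
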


\begin{proof}
In the open set $U_1$ with affine coordinates $x_2:=\frac{X_2}{X_1}$, $x_3:=\frac{X_3}{X_1}$ the section $\omega$ becomes 
\begin{eqnarray*}
&&\frac{x_2^2}{x_3^{2k}}d\frac{1}{x_3}d\frac{1}{x_3}+\left(\frac{x_2}{x_3^k}-4\frac{x_2^{k+1}}{x_3^{2k}}-\frac{x_2}{x_3^{2k}}\right)d\frac{1}{x_3}d\frac{x_2}{x_3}+4\frac{x_2^k}{x_3^{2k}}d\frac{x_2}{x_3}d\frac{x_2}{x_3}\\
&=&\frac{1}{x_3^{2k+3}}(4x_2^kx_3dx_2dx_2+(-x_2x_3^k+x_2-4x_2^{k+1})dx_2dx_3+x_2^2x_3^{k-1}dx_3dx_3).\end{eqnarray*}

Similarly, in the open set $U_2$ with affine coordinates $x_1:=\frac{X_1}{X_2}$, $x_3:=\frac{X_3}{X_2}$, the section $\omega$ becomes
$$
\frac{1}{x_3^{2k+3}}(x_1^{2k-2}x_3dx_1dx_1+(-x_1^{2k-1}-x_1^{k-1}x_3^k+4x_1^{k-1})dx_1dx_3+x_1^kx_3^kdx_3dx_3).
$$
%\begin{proof}
%Write
%\begin{eqnarray*}
%A&=&x_1^2x_2^2dx_1dx_1,\cr
%B&=&(x_1x_2-4x_1x_2^3-x_1^3x_2)dx_1dx_2,\cr
%C&=&4x_1^2x_2^2dx_2dx_2.
%\end{eqnarray*}
%In the open set $U_1$ with affine coordinates $x_2:=\frac{X_2}{X_1}$, $x_3:=\frac{X_3}{X_1}$, these forms become
%\begin{eqnarray*}
%A&=&x_2^2x_3^{-4}d(x_3^{-1})d(x_3^{-1}),\cr
%B&=&(x_2x_3^{-2}-4x_2^3x_3^{-4}-x_2x_3^{-4})d(x_3^{-1})d(x_2x_3^{-1}),\cr
%C&=&4x_2^2x_3^{-4}d(x_2x_3^{-1})d(x_2x_3^{-1}),
%\end{eqnarray*}
%hence in $U_1$
%$$A+B+C=\frac{1}{x_3^7}(4x_2^2x_3dx_2dx_2+(x_2-4x_2^3-x_2x_3^2)dx_2dx_3+x_2^2x_3dx_3dx_3).$$
%Similarly, in the open set $U_2$ with affine coordinates $x_1:=\frac{X_1}{X_2}$, $x_3:=\frac{X_3}{X_2}$, we have
%%\begin{eqnarray*}
%%A&=&x_1^2x_3^{-4}d(x_1x_3^{-1})d(x_1x_3^{-1}),\cr
%%B&=&(x_1x_3^{-2}-4x_1x_3^{-4}-x_1^3x_3^{-4})d(x_1x_3^{-1})d(x_3^{-1}),\cr
%%C&=&4x_1^2x_3^{-4}d(x_3^{-1})d(x_3^{-1}),
%%\end{eqnarray*}
%%hence in $U_2$,
%$$A+B+C=\frac{1}{x_3^7}(x_1^2x_3dx_1dx_1+(-x_1x_3^2-x_1^3+4x_1)dx_1dx_3+x_1^2x_3dx_3dx_3).$$
Therefore our symmetric differential extends to a global section $$\omega\in H^0(\mathbb{P}^2,\mathcal{O}(2k+3)\otimes S^2\Omega^1_{\mathbb{P}^2}).$$
\end{proof}

\begin{remark}
Note that this symmetric differential $\omega$ is substantially different from the ones constructed in \cite{V} and \cite{subm}, because it is constrained by the geometry of the problem under consideration, and it is not intrinsic to the method.
\end{remark}

\begin{remark}\label{elpoli}
Now we will find the complete set of $\omega$-integral curves in $X_{3,k}=\mathbb{P}^2$. For this it will be useful to note that
if $k$ is an even integer, then the polynomial $$Q(x_1,x_2):=x_1^{2k}-8x_1^kx_2^k-2x_1^k+16x_2^{2k}-8x_2^k+1$$ factors in irreducible factors as follows
$$Q=(x_1^{k/2}-2x_2^{k/2}-1)(x_1^{k/2}-2x_2^{k/2}+1)(x_1^{k/2}+2x_2^{k/2}-1)(x_1^{k/2}+2x_2^{k/2}+1),$$
however, when $k$ is odd then $Q(x_1,x_2)$ is irreducible.
\end{remark}

\begin{proposition}\label{sdomega}
The following irreducible curves in $X_{3,k}$ are $\omega$-integral:
\begin{itemize}
\item[(i)] $C_{\alpha}:\frac{(\alpha-3)(\alpha-2)}{2}x_1^k-((\alpha-2)^2-1)x_2^k+\frac{(\alpha-2)(\alpha-1)}{2}x_3^k=0$, $\alpha\in \mathbb{C}\backslash\left\{1,2,3\right\}$;
\item[(ii)] $C_\infty: x_1^k-2x_2^k+x_3^k=0$;
\item[(iii)] The coordinate axes $x_1=0$, $x_2=0$, $x_3=0$;
\item[(iv)] If $k$ is even, the four curves $x_1^{k/2}\pm 2x_2^{k/2}\pm x_3^{k/2}=0$.
\item[(v)] If $k$ is odd, the curve $x_1^{2k}-8x_1^kx_2^k-2x_1^kx_3^k+16x_2^{2k}-8x_2^kx_3^k+x_3^{2k}=0$.
\end{itemize}
\end{proposition}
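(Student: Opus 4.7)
The plan is to apply Theorem~\ref{TYU} case by case. For an irreducible curve $C\subset X_{3,k}=\mathbb{P}^2$, I will fix an affine chart $U$ meeting $C$ on which $\mathcal{O}(2k+3)$ is trivial, write the local expression $A\,dx^2+B\,dx\,dy+C\,dy^2$ of $\omega$ in the affine coordinates $(x,y)$, and choose a local defining polynomial $g$ of $C\cap U$. At the generic point of $C$ the hypothesis of Theorem~\ref{TYU} reduces to the single algebraic statement
$$A\,g_y^{\,2}-B\,g_x\,g_y+C\,g_x^{\,2}\equiv 0\pmod{(g)},$$
that is, to the vanishing along $C$ of $\omega$ evaluated on the tangent direction $(-g_y,g_x)$. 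All curves on the list except $x_3=0$ meet $U_3$, so I will work there; the line $x_3=0$ is handled in $U_1$ using the local form of $\omega$ already computed in showing that $\omega$ is a global section.

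For (i) and its limit (ii), in $U_3$ the defining polynomial has the form $g_{a,b,c}=a\,x_1^k-b\,x_2^k+c$ with
$$(a,b,c)=\bigl(\tfrac{(\alpha-3)(\alpha-2)}{2},\,(\alpha-1)(\alpha-3),\,\tfrac{(\alpha-2)(\alpha-1)}{2}\bigr),$$
and $(a,b,c)=(1,2,1)$ for $C_\infty$. A direct expansion gives
$$A\,g_{x_2}^{\,2}-B\,g_{x_1}g_{x_2}+C\,g_{x_1}^{\,2}=k^2\,x_1^{2k-2}\,x_2^k\bigl[b(b-4a)\,x_2^k+a(4a-b)\,x_1^k+ab\bigr],$$
and using $a\,x_1^k\equiv b\,x_2^k-c\pmod{g_{a,b,c}}$ the bracket collapses to the constant $ab+bc-4ac$. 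The coefficients above satisfy $ab+bc=(\alpha-1)(\alpha-2)^2(\alpha-3)=4ac$, and $(1,2,1)$ gives $ab+bc=4=4ac$, settling (i) and (ii). The coordinate axes in (iii) are immediate by inspection: in $U_3$, $A$ is divisible by $x_2$ and $C$ by $x_1$, taking care of $x_2=0$ and $x_1=0$; in $U_1$ the $dx_2^2$-coefficient of $\omega$ is $4x_2^kx_3$, which handles $x_3=0$.

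The substance of the proof lies in (iv) and (v). The organizing observation is that the discriminant of $\omega$ as a binary quadratic form in $(dx_1,dx_2)$ factors as $B^2-4AC=x_1^{2k-2}x_2^2\,Q(x_1,x_2)$, with $Q$ the polynomial of Remark~\ref{elpoli}. On $\{Q=0\}$ the form $\omega$ degenerates to a perfect square, making $\{Q=0\}$ a natural candidate for an $\omega$-integral locus. For (v), where $k$ is odd and $Q$ is irreducible, I will introduce the substitutions $u=x_1^k$, $v=x_2^k$, $s=u-4v$, $p=s+1$, $q=s-1$, $r=u+4v-1$. In these variables $g=Q$ becomes $g=pq-2r$ and the tangential expression simplifies to $16k^2\,x_1^{2k-2}x_2^k\bigl(4vp^2+uq^2-rpq\bigr)$. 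The polynomial identity
$$4vp^2+uq^2-rpq=-g,$$
verified by substituting $s^2=g+2(u+4v)-1$ and expanding, yields the required inclusion in $(g)$. For (iv), applying the same computation with $g=x_1^{k/2}\pm 2x_2^{k/2}\pm 1$ produces a tangential expression factoring as $\pm\tfrac{k^2}{2}\,x_1^{3k/2-2}x_2^{k/2}\,g\,(g\pm 2)$ after completion of squares, which again lies in $(g)$.

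The main obstacle is organizing the computation for (v): a direct expansion in $(x_1,x_2)$ yields a bulky polynomial whose divisibility by $Q$ is opaque, whereas the $(u,v,s,p,q,r)$ substitutions reduce the verification to the short identity $4vp^2+uq^2-rpq=-g$. Once this identity is available, (iv) follows by a routine repetition of the argument on each of the four linear factors of $Q$ supplied by Remark~\ref{elpoli}.
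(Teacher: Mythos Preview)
Your proof is correct and follows essentially the same strategy as the paper: both arguments verify case by case that the restriction of $\omega$ to each listed curve vanishes, invoking Theorem~\ref{TYU}. The paper does this by solving the defining equation for one differential in terms of the other and substituting into $\omega$, while you repackage the same computation as the vanishing modulo $g$ of the tangential form $A g_y^2 - B g_x g_y + C g_x^2$; these are equivalent reformulations of the same restriction.

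Where your write-up differs usefully is in organization. Treating (i) and (ii) simultaneously via the triple $(a,b,c)$ and reducing the verification to the single numerical identity $ab+bc-4ac=0$ is neater than the paper's separate checks. For (v), your change of variables $u,v,s,p,q,r$ reducing everything to the one-line identity $4vp^2+uq^2-rpq=-g$ is considerably more transparent than the paper's direct substitution, which produces a bulkier rational expression before the factor of $Q$ emerges. One small point: the proposition also asserts irreducibility of the curves, which the paper addresses in its first paragraph (trivial for (i)--(iv), and via Remark~\ref{elpoli} for (v)); you invoke the irreducibility of $Q$ for (v) but should add a word that (i)--(iv) are Fermat-type plane curves and hence irreducible.
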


\begin{proof} 
%For the coordinate axes (curves of type (iii)), it is easy to prove that $\omega$ vanishes along them. (Write $\omega$ in appropriate coordinates for each case.) %For example, on $U_3$ we have
%$$\omega_{|x_1=0}=x_1^2x_2^2dx_1dx_1+(x_1x_2-4x_1x_2^2-x_1^3x_2)dx_1dx_2+4x_1^2x_2^2dx_2dx_2=0$$
%because the curve $x_1=0$ satisfies $dx_1=0$. Hence by Theorem \ref{TYU} they are $\omega$-integral.
The curves of type (i), (ii), (iii) and (iv) are clearly irreducible. The curve of type (v) is irreducible by the previous discussion on the polynomial $Q(x_1,x_2)$.

Let $C_\alpha$ be a curve of type (i). The curve $C_\alpha$ restricted to $U_3$ has equation $x_2^k=\frac{\alpha-1}{2(\alpha-1)}x_1^k+\frac{\alpha-2}{2(\alpha-3)}$. Taking differentials we obtain $dx_2=\frac{\alpha-2}{2(\alpha-1)}\frac{x_1^{k-1}}{x_2^{k-1}}dx_1$.
Thus the restriction of $\omega$ to $C_\alpha$ has equation
$$\begin{aligned}
&x_1^{2k-2}x_2^2dx_1dx_1+(x_1^{k-1}x_2-4x_1^{k-1}x_2^{k+1}-x_1^{2k-1}x_2)dx_1dx_2+4x_1^kx_2^kdx_2dx_2\\
&=x_1^{2k-2}x_2^{2-k}\left(x_2^k+(1-4x_2^k-x_1^k)\frac{\alpha-2}{2(\alpha-1)}+4\left(\frac{\alpha-2}{2(\alpha-1)}\right)^2x_1^k\right)dx_1dx_1\\
&=x_1^{2k-2}x_2^{2-k}\left(\left(1-4\frac{\alpha-2}{2(\alpha-1)}\right)\left(\frac{\alpha-2}{2(\alpha-1)}x_1^k+\frac{\alpha-2}{2(\alpha-3)}\right)\right.\\
&\quad \left.+(1-x_1^k)\frac{\alpha-2}{2(\alpha-1)}+4\left(\frac{\alpha-2}{2(\alpha-1)}\right)x_1^k\right)dx_1dx_1\\
&=0
\end{aligned}$$

Hence by Theorem \ref{TYU}, the curves of type (i) are $\omega$-integral.

It can be similarly checked that curves of type (ii), (iii) and (iv) are $\omega$-integral in the corresponding cases.

Now let $C$ be a curve of type (v). Restricted to $U_3$, this curve has equation $x_1^{2k}-8x_1^kx_2^k-2x_1^k+16x_2^{2k}-8x_2^k+1=0$. Taking differentials, we obtain $$x_1^{k-1}dx_1=\frac{4x_2^{k-1}(x_1^k-4x_2^k+1)}{x_1^k-4x_2^k-1}dx_2.$$
Hence the restriction of $\omega$ to $C$ is
$$\begin{aligned}
&\left(\frac{16x_2^{2k}(x_1^k-4x_2^k+1)^2}{(x_1^k-4x_2^k-1)^2}+\frac{4x_2^{k-1}(x_1^k-4x_2^k+1)}{x_1^k-4x_2^k-1}x_2(1-4x_2^k-x_1^k)+4x_1^kx_2^k\right)dx_2dx_2\\
&=\frac{4x_2^k}{(x_1^k-4x_2^k-1)^2}\left(4x_2^k(x_1^k-4x_2^k+1)^2-(x_1^k-4x_2^k+1)(x_1^k+4x_2^k-1)(x_1^k-4x_2^k-1)\right.\\ &\quad\left.+x_1^k(x_1^k-4x_2^k-1)^2\right)dx_2dx_2\\
&=\frac{-4x_2^k}{(x_1^k-4x_2^k-1)^2}\left(x_1^{2k}-8x_1^kx_2^k-2x_1^k+16x_2^{2k}-8x_2^k+1\right)dx_2dx_2\\ 
&=0\\
\end{aligned}$$
Thus $C$ is $\omega$-integral.
\end{proof}

%The curve of type (ii) restricted to $U_3$ has equation $x_1^2-2x_2^2+1=0$. Taking differentials we get $dx_1=2\frac{x_2}{x_1}dx_2$. Therefore
%\begin{eqnarray*}
%A_{|C_\infty}&=&4x_2^4dx_2dx_2,\cr
%B_{|C_\infty}&=&2(2x_2^2-6x_2^4)dx_2dx_2,\cr
%C_{|C_\infty}&=&4(2x_2^4-x_2^2)dx_2dx_2,
%\end{eqnarray*}
%hence the restriction of $\omega$ to the curve $C_\infty$ has equation
%\begin{eqnarray*}
%&&(4x_2^4+2(2x_2^2-6x_2^4)+4(2x_2^4-x_2^2))dx_2dx_2\cr
%&=& (2x_2^2-4x_2^4+2x_1^2x_2^2)dx_2dx_2\cr
%&=& (x_1^2+1-(x_1^2+1)^2+x_1^2(x_1^2+1))dx_2dx_2\cr
%&=& 0.
%\end{eqnarray*}
%Thus by Theorem \ref{TYU}, the curve of type (ii) is $\omega$-integral.

%
%Let $C_{\epsilon_2,\epsilon_2}:=x_1+2\epsilon_2x_2+\epsilon_3x_3=0$, with $\epsilon_2,\epsilon_3\in\left\{\pm 1\right\}$. In $U_3$ it has equation $x_1=-2\epsilon_2x_2-\epsilon_3$. Taking differentials we get $dx_1=-2\epsilon_2dx_2$. Hence
%\begin{eqnarray*}
%A_{|C_{\epsilon_2,\epsilon_3}}&=&x_1^2x_2^2(-2\epsilon_2)^2dx_2dx_2= (16x_2^4+4x_2^2+16\epsilon_2\epsilon_3x_2^3)dx_2dx_2,\cr
%B_{|C_{\epsilon_2,\epsilon_3}}&=&(x_1x_2-4x_1x_2^3-x_1^3x_2)(-2\epsilon_2)dx_2dx_2=(-32x_2^4-32\epsilon_2\epsilon_3x_2^3-8x_2^2)dx_2dx_2,\cr
%C_{|C_{\epsilon_2,\epsilon_3}}&=&4x_1^2x_2^2dx_2dx_2=(16x_2^4+4x_2^2+16\epsilon_2\epsilon_3x_2^3)dx_2dx_2.
%\end{eqnarray*}
%Thus $\omega_{|C_{\epsilon_2,\epsilon_3}}$ has equation
%\begin{eqnarray*}
%A_{|C_{\epsilon_2,\epsilon_3}}+B_{|C_{\epsilon_2,\epsilon_3}}+C_{|C_{\epsilon_2,\epsilon_3}}=0.
%\end{eqnarray*}
%Therefore from Theorem \ref{TYU}, the curves of type (iv) are $\omega$-integral.

\begin{lemma}
The curves from Proposition \ref{sdomega} are the only $\omega$-integral curves on the surface $X_{3,k}=\mathbb{P}^2$.
\end{lemma}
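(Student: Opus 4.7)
The plan is to exploit the fact that the curves in parts (i), (ii), (iii) of Proposition \ref{sdomega} together form a linear pencil of $\omega$-integral curves, and then invoke Theorem \ref{eee} (with $r=2$) at a generic point. Once the two pencil members through a generic point $P$ are shown to exhaust the $\omega$-integral curves through $P$, any remaining $\omega$-integral curve must be contained in the discriminant $\Delta$ of $\omega$, whose irreducible components will turn out to be exactly the three coordinate axes together with the irreducible components of the curve defined by the polynomial $Q$ from Remark \ref{elpoli}.

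The first step is to reparametrize by $\beta = \alpha - 2$, so that after multiplying by $2$ the defining equation of $C_\alpha$ becomes the quadratic in $\beta$
$$\beta^{2}(x_1^{k} - 2x_2^{k} + x_3^{k}) - \beta(x_1^{k} - x_3^{k}) + 2 x_2^{k} = 0,$$
with $C_\infty$ recovered as $\beta = \infty$ and the axes $x_1 = 0$, $x_2 = 0$, $x_3 = 0$ recovered at $\beta = -1, 0, 1$ respectively. A direct check shows that the simultaneous vanishing of the three coefficients at $[a:b:c]$ forces $a = b = c = 0$, so the pencil has no base points. The second step is to compute $\Delta = \Delta_{U_3}$ on the chart $U_3 = D_+(X_3)$: writing $\omega = A_0 \, dx_1 dx_1 + A_1 \, dx_1 dx_2 + A_2 \, dx_2 dx_2$ one finds
$$A_1^{2} - 4 A_0 A_2 \;=\; x_1^{2k-2} x_2^{2}\cdot Q(x_1, x_2),$$
so after passing to closures, $\Delta$ is the union of the three coordinate axes and the zero locus of the homogenization $Q(x_1, x_2, x_3) = x_1^{2k} - 8x_1^k x_2^k - 2x_1^k x_3^k + 16 x_2^{2k} - 8 x_2^k x_3^k + x_3^{2k}$.

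The crucial observation is that this same $Q$ is simultaneously the discriminant (in $\beta$) of the pencil equation displayed above; thus for every $P \notin \Delta$ the quadratic in $\beta$ has two distinct roots in $\mathbb{P}^{1}$, producing two distinct $\omega$-integral pencil curves through $P$. By Theorem \ref{eee} these two members account for the full allowed multiplicity $r = 2$, so no further $\omega$-integral curve passes through $P$. Hence any hypothetical additional $\omega$-integral curve must lie entirely inside $\Delta$ and so coincide with an irreducible component of $\Delta$; by Remark \ref{elpoli} these components are precisely the three axes (already in the pencil, hence in (iii)) and either the four factors from (iv) when $k$ is even or the single irreducible factor from (v) when $k$ is odd, all of which are in the list.

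I do not foresee a serious obstacle. The $\omega$-integrality of each candidate curve is already verified in Proposition \ref{sdomega}; what remains is the routine bookkeeping of recognising $Q$ simultaneously as the discriminant of $\omega$ and as the discriminant in $\beta$ of the pencil equation, together with the base-point-free check. The one point requiring mild care is that the discriminant computation is carried out on $U_3$, so one should briefly observe that any irreducible curve not contained in $\{x_3 = 0\}$ meets $U_3$ and can therefore be tested there, while $\{x_3=0\}$ itself is already in (iii).
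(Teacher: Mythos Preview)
Your proposal is correct and follows essentially the same approach as the paper: compute $\Delta_{U_3}$, identify its components as curves already in the list, and then use the coincidence between the discriminant of $\omega$ and the discriminant (in the pencil parameter) of the family $\{C_\alpha\}\cup\{C_\infty\}$ to see that exactly two listed curves pass through every $P\notin\Delta_{U_3}$, so Theorem~\ref{eee} with $r=2$ finishes. Your reparametrization $\beta=\alpha-2$ and the uniform treatment of $\beta\in\mathbb{P}^1$ (absorbing $C_\infty$ and the three axes as $\beta=\infty,-1,0,1$) is a mild cosmetic improvement that avoids the paper's case split on whether $P\in C_\infty$; the only quibble is that the family is not literally a \emph{linear} pencil (the defining equation is quadratic in $\beta$), but this does not affect the argument.
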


\begin{proof}
The restriction of $\omega$ to $U_3$ is 
$$x_1^{2k-2}x_2^2dx_1dx_1+(x_1^{k-1}x_2-4x_1^{k-1}x_2^{k+1}-x_1^{2k-1}x_2)dx_1dx_2+4x_1^kx_2^kdx_2dx_2.$$
We have from Definition \ref{delta}
$$\Delta_{U_3}=(\mathbb{P}^2\backslash U_3)\cup\left\{P\in U_3:A_0(P)=0\mbox{ or }A_1^2(P)-4A_0(P)A_2(P)=0\right\}\subseteq\mathbb{P}^2.$$
with
\begin{eqnarray}
A_1^2(P)-4A_0(P)A_2(P)&=&(x_1^{k-1}x_2-4x_1^{k-1}x_2^{k+1}-x_1^{2k-1}x_2)^2-16x_1^{3k-2}x_2^{k+2}\cr
&=&x_1^{2k-2}x_2^2(x_1^{2k}-8x_1^kx_2^k-2x_1^k+16x_2^{2k}-8x_2^k+1).\cr
\end{eqnarray}
This factors as follows when $k$ is even:
\begin{eqnarray}\label{di}
A_1^2(P)-4A_0(P)A_2(P)&=&x_1^{2k-2}x_2^2\prod_{\epsilon_2,\epsilon_3\in\{-1,1\}} (x_1^{k/2}+\epsilon_2x_2^{k/2}+\epsilon_3).
\end{eqnarray}

We also have $A_0=x_1^{2k-2}x_2^2$. Therefore $\Delta_{U_3}$ is an union of $\omega$-integral curves.
From Theorem \ref{eee} we only need to prove that for any $P\notin\Delta_{U_3}$ there are at least two $\omega$-integral curves passing through $P$. Let $P=[x_1:x_2:x_3]$ be a point outside $\Delta_{U_3}$. The point $P$ lies on a curve $C_\alpha$ if and only if $$\frac{(\alpha-3)(\alpha-2)}{2}x_1^k-((\alpha-2)^2-1)x_2^k+\frac{(\alpha-2)(\alpha-1)}{2}x_3^k=0,$$ and it is in $C_\infty$ if and only if $x_1^k-2x_2^k+x_3^k=0$. Since $P\notin \Delta_{U_3}$, we have that $x_i\neq 0$ for $i=1,2,3$ (because $\Delta_{U_3}$ contains the coordinate axes). 
The discriminant of the quadratic equation (with $\alpha$ the unknown)
\begin{eqnarray}\label{alfa}
\frac{(\alpha-3)(\alpha-2)}{2}x_1^k-((\alpha-2)^2-1)x_2^k+\frac{(\alpha-2)(\alpha-1)}{2}x_3^k=0
\end{eqnarray}
is
\begin{eqnarray*}
&&\left(-\frac{3}{2}x_3^k+4x_2^k-\frac{5}{2}x_1^k\right)^2-4\left(x_3^k-3x_2^k+3x_1^k\right)\left(\frac{x_3^k}{2}-x_2^k+\frac{x_1^k}{2}\right)\cr
&=& \frac{1}{4}(x_1^{2k}-8x_1^kx_2^k-2x_1^kx_3^k+16x_2^{2k}-8x_2^kx_3^k+x_3^{2k}),
\end{eqnarray*}
which is different from zero at $P$ because it is outside $\Delta_{U_3}$ (by Equation \eqref{di}). The leading coefficient with respect to $\alpha$ from the quadratic Equation \eqref{alfa} is $\frac{1}{2}(x_1^k-2x_2^k+x_3^k)$. Suppose that $P$ is not in $C_\infty$, then Equation \eqref{alfa} has degree $2$ in $\alpha$. Noting that Equation \eqref{alfa} is the equation of $C_\alpha$ (see Proposition \ref{sdomega}), we get that there are precisely two values of $\alpha$ such that the curve $C_\alpha$ passes through $P$. On the other hand, if $P\in C_\infty$, then Equation \eqref{alfa} is linear in $\alpha$ and it has exactly one solution $\alpha_0$ and we get that $C_{\infty}$ and $C_{\alpha_0}$ pass through $P$. Since this holds for every $P$ outside $\Delta_{U_3}$ we know from Theorem \ref{eee} that there are no more $\omega$-integral curves in $X_{3,k}$.
\end{proof}

\section{Pullbacks of curves of type $(i)$, $(ii)$ and $(iii)$}

%Recall that $\rho_{n,k}=\pi_4\circ\ldots\circ\pi_n$. 
From Proposition \ref{sdomega}, we have the complete list of $\omega$-integral curves of $X_{3,k}\cong\mathbb{P}^2$. Now we will compute their pullbacks to the surfaces $X_{n,k}$ for greater values of $n$, so we can later know all $\rho_{n,k}^\bullet\omega$-integral curves on the surfaces $X_{n,k}$. In this section, we will study the pullbacks of curves of type (i), (ii) and (iii).  %We will use this list to find all $\rho_{n,k}^{\bullet}\omega$-integral curves on the other surfaces $X_{n,k}$. 

\begin{lemma}\label{sdrediii}
The pull-back under $\rho_{n,k}$ of a curve of type (iii) of Proposition \ref{sdomega} is a smooth complete intersection curve.\end{lemma}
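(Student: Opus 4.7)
The plan is to realize the pullback as an intersection of $X_{n,k}$ with a coordinate hyperplane in $\mathbb{P}^{n-1}$ and then to verify smoothness (which, together with the correct number of defining equations, yields the complete intersection property) via the Jacobian criterion. The three curves of type (iii) are $V(x_j)$ for $j\in\{1,2,3\}$; fix such a $j$. Since $\rho_{n,k}$ is the restriction to $X_{n,k}$ of the linear projection $[x_1:\ldots:x_n]\mapsto[x_1:x_2:x_3]$, the scheme-theoretic pullback $\rho_{n,k}^{-1}(V(x_j))$ equals $X_{n,k}\cap V(x_j)$, which by Lemma \ref{tusfi} is the subscheme of $\mathbb{P}^{n-1}$ defined by the $n-2$ homogeneous polynomials $g_1,\ldots,g_{n-3},x_j$.

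Next, I would apply the Jacobian criterion to these $n-2$ equations. The rows of the Jacobian of $(g_1,\ldots,g_{n-3})$, computed explicitly in the proof of Lemma \ref{sddsmooth}, have entries proportional to $x_1^{k-1}$, $x_2^{k-1}$, $x_3^{k-1}$ in the first three columns, to $x_{i+3}^{k-1}$ in column $i+3$, and zero elsewhere. The key observation is that for $j\in\{1,2,3\}$ every entry in column $j$ of this Jacobian is a constant multiple of $x_j^{k-1}$; since $k\geq 2$, this column vanishes identically at every point of $\rho_{n,k}^{-1}(V(x_j))$. Combined with the fact (from Lemma \ref{sddsmooth}) that the Jacobian of $(g_1,\ldots,g_{n-3})$ has maximal rank $n-3$ on $X_{n,k}$, I would conclude that the unit vector $e_j$, which is the gradient of $x_j$, does not lie in its row span, so the enlarged $(n-2)\times n$ Jacobian of $(g_1,\ldots,g_{n-3},x_j)$ has rank $n-2$ at every point of the pullback.

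It follows that the pullback is smooth of pure dimension $1$, and being cut out by $n-2$ homogeneous equations equal to its codimension in $\mathbb{P}^{n-1}$, it is a complete intersection curve. There is no substantial obstacle: the argument rests entirely on the observation that $x_1,x_2,x_3$ occur in the $g_i$ only through their $k$-th powers, so the relevant columns of the Jacobian vanish automatically on the pullback when $k\geq 2$. The only mild caveat is that the pullback need not be irreducible; however, smoothness of the total scheme ensures that any components are disjoint, and the complete intersection statement is unaffected by reducibility.
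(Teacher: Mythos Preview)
Your proof is correct and follows the same approach as the paper: both realize the pullback as $X_{n,k}\cap V(x_j)$ cut out by the $n-2$ equations $g_1,\ldots,g_{n-3},x_j$ and verify smoothness via the Jacobian criterion. Your observation that column~$j$ of the Jacobian of the $g_i$ vanishes identically on $V(x_j)$ (so that $e_j$ cannot lie in its row span) is a clean shortcut for the rank check, whereas the paper simply refers back to the inductive argument of Lemma~\ref{sddsmooth}.
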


\begin{proof}
Let $C$ be the pullback of a curve of type (iii). It is given by the equations
\begin{eqnarray*}
x_i&=&0\cr
x_1^k-3x_2^k+3x_3^k&=&x_4^k\cr
&\vdots&\cr
\frac{(n-3)(n-2)}{2}x_1^k-((n-2)^2-1)x_2^k+\frac{(n-2)(n-1)}{2}x_3^k&=&x_n^k,
\end{eqnarray*}
with $i\in\left\{1,2,3\right\}$.
Since $C$ is a curve defined by $n-2$ equations we get that it is a complete intersection.
%The equations of $C$ are 
%\begin{eqnarray*}
%x_i&=&0\cr
%x_1^2-3x_2^2+3x_3^2&=&x_4^2\cr
%&\vdots&\cr
%\frac{(n-3)(n-2)}{2}x_1^2-((n-2)^2-1)x_2^2+\frac{(n-2)(n-1)}{2}x_3^2&=&x_n^2,
%\end{eqnarray*}
%with $i$ one of $0,1,2$, hence it is a complete intersection on $\mathbb{P}^{n-1}$. 
Its Jacobian matrix evaluated at the point $[x_1:\ldots :x_n]\in X_{n,k}$ is a $(n-2)\times n$ matrix of the form
$$\left(\begin{smallmatrix}%{ccccccc}
a & b & c & 0 & 0 & 0 & 0\cr
kx_1^{k-1} & -3kx_2^{k-1} & 3kx_3^{k-1} & -kx_4^{k-1} & 0 & \cdots & 0 \cr
3kx_1^{k-1} & -8kx_2^{k-1} & 6kx_3^{k-1} & 0 & -kx_5^{k-1} & \ddots & \vdots \cr
\vdots & \vdots & \vdots & \vdots & \ddots & \ddots & 0\cr
\frac{(n-3)(n-2)}{2}kx_1^{k-1} & ((n-2)^2-1)kx_2^{k-1} & \frac{(n-2)(n-1)}{2}kx_3^{k-1} & 0 & \cdots & 0 & -kx_n^{k-1}
\end{smallmatrix}\right)$$
with exactly one of $a,b,c$ different from zero (and equal to $1$). 
By a reasoning similar to the proof of Lemma \ref{sddsmooth}, this $(n-2)\times n$ matrix will have maximal rank. %The cases $x_2=0$ and $x_3=0$ are proved similarly.
\end{proof}

\begin{lemma}\label{sdredi}
The pull-back under $\rho_{n,k}$ of the curve $C_\alpha$ of type (i) of Proposition \ref{sdomega} with $\alpha\neq 4,\ldots,n$ is a smooth complete intersection. The pull-back under $\rho_{n,k}$ of the curve $C_{\infty}$ of type (ii) is a smooth complete intersection.
\end{lemma}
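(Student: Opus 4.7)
My plan is to mirror the argument of Lemma \ref{sdrediii}, with the extra subtlety that the defining equations of $C_\alpha$ and $C_\infty$ only involve the first three coordinates, so the corresponding rows of the Jacobian can in principle collide with $g_i$-rows for which $x_{i+3}=0$.

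First, I would realize the pullback as a complete intersection. Since $\rho_{n,k}$ sends $[x_1:\cdots:x_n]$ to $[x_1:x_2:x_3]$, the scheme-theoretic pullback of $C_\alpha$ (respectively $C_\infty$) is cut out in $\mathbb{P}^{n-1}$ by the $n-3$ generators $g_1,\ldots,g_{n-3}$ of the ideal of $X_{n,k}$ (from Lemma \ref{tusfi}) together with the single defining equation of $C_\alpha$ or $C_\infty$, totalling $n-2$ equations. Because $\rho_{n,k}$ is finite (Proposition \ref{rir}), the pullback is pure of dimension $1=(n-1)-(n-2)$, and is therefore a complete intersection.

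Next, I would verify smoothness via the Jacobian criterion. After dividing by $k$, the Jacobian is an $(n-2)\times n$ matrix whose $g_i$-row has the form
$$\left(\tfrac{i(i+1)}{2}x_1^{k-1},\;-((i+1)^2-1)x_2^{k-1},\;\tfrac{(i+1)(i+2)}{2}x_3^{k-1},\;0,\ldots,-x_{i+3}^{k-1},\ldots,0\right),$$
while the row for $C_\alpha$ (respectively $C_\infty$) has the shape $(b_1x_1^{k-1},b_2x_2^{k-1},b_3x_3^{k-1},0,\ldots,0)$. Any $g_i$-row with $x_{i+3}\ne 0$ has a distinct pivot in column $i+3$, so these rows are mutually independent, and the only obstruction to maximal rank comes from the $g_{i_0}$-rows for which $x_{i_0+3}=0$; there are at most two such $i_0$ by Lemma \ref{sdnozero}. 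All the obstructing rows, together with the $C_\alpha$- (or $C_\infty$-) row, live in the first three columns, so the task reduces to showing that these at most three vectors, viewed in $\mathbb{C}^3$, are linearly independent.

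This is the main point, and it is where the hypothesis $\alpha\notin\{4,\ldots,n\}$ enters. For $C_\alpha$, the obstructing rows arise coordinatewise from $p(t)=(\tfrac{(t-3)(t-2)}{2},-((t-2)^2-1),\tfrac{(t-2)(t-1)}{2})$ evaluated at $t=\alpha$ and at $t=i_0+3$. Writing $p(t)=v_0+tv_1+t^2v_2$ and checking that $\det(v_0,v_1,v_2)=\tfrac{1}{2}\ne 0$, one sees that $p(t_1),\ldots,p(t_r)$ are linearly independent whenever the $t_j$ are distinct; since $\alpha$ and the at most two values of $i_0+3\in\{4,\ldots,n\}$ are pairwise distinct by hypothesis, independence follows when $x_1,x_2,x_3$ are all nonzero. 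For $C_\infty$, the extra row $(1,-2,1)$ has coordinate sum $0$ while every $p(t)$ has coordinate sum $1$, which directly excludes $(1,-2,1)$ from the span of any set of $p(t_j)$'s. In the degenerate subcase where one of $x_1,x_2,x_3$ also vanishes, the same considerations restricted to the two remaining coordinates yield short polynomial identities that contradict either the hypothesis on $\alpha$ (for $C_\alpha$) or the constraint $i_0+3\geq 4$ (for $C_\infty$).
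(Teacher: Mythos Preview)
Your approach is the paper's: both realize the pullback as a complete intersection cut out by the $g_i$ together with the defining equation of $C_\alpha$ (or $C_\infty$), and then apply the Jacobian criterion. The paper simply writes ``a computation similar to the proof of Lemma \ref{sddsmooth}'', whereas you organize the computation via the parametrization $p(t)$ and a Vandermonde argument; this makes transparent exactly where the hypothesis $\alpha\notin\{4,\ldots,n\}$ is used, which the paper leaves implicit.

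There is one small gap in your treatment of $C_\infty$. The observation that $(1,-2,1)$ has coordinate sum $0$ while each $p(t)$ has coordinate sum $1$ only shows that $(1,-2,1)$ is not a scalar multiple of a single $p(t)$; it does \emph{not} exclude $(1,-2,1)$ from the span of two such vectors, since for instance $p(s)-p(t)$ also has coordinate sum $0$. In the case of two obstructing $g_i$-rows you therefore need a short extra step. The cleanest fix is to note that $(1,-2,1)=2v_2$ in your notation; then the matrix of $\{p(s),p(t),2v_2\}$ in the basis $\{v_0,v_1,v_2\}$ is
\[
\begin{pmatrix} 1 & s & s^2 \\ 1 & t & t^2 \\ 0 & 0 & 2 \end{pmatrix},
\]
with determinant $2(t-s)\ne 0$. (Equivalently, view $(1,-2,1)$ as the value of the homogenized $p$ at $\infty$ and run your Vandermonde argument uniformly.) With this correction your argument is complete and in fact more informative than the paper's sketch.
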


\begin{proof}
Let $C=C_\alpha$ be the pullback of a curve of type (i) with $\alpha\neq 4,\ldots,n$. It is given by the system of equations
\begin{eqnarray*}
\frac{(\alpha-3)(\alpha-2)}{2}x_1^k-((\alpha-2)^2-1)x_2^k+\frac{(\alpha-2)(\alpha-1)}{2}x_3^k&=&0\cr
x_1^k-3x_2^k+3x_3^k&=&x_4^k\cr
&\vdots&\cr
\frac{(n-3)(n-2)}{2}x_1^k-((n-2)^2-1)x_2^k+\frac{(n-2)(n-1)}{2}x_3^k&=&x_n^k.
\end{eqnarray*}
Therefore it is a complete intersection. %By Proposition \ref{pullback} the equations of $C$ are
%\begin{eqnarray*}
%\frac{(\alpha-3)(\alpha-2)}{2}x_1^2-((\alpha-2)^2-1)x_2^2+\frac{(\alpha-2)(\alpha-1)}{2}x_3^2&=&0\cr
%x_1^2-3x_2^2+3x_3^2&=&x_4^2\cr
%&\vdots&\cr
%\frac{(n-3)(n-2)}{2}x_1^2-((n-2)^2-1)x_2^2+\frac{(n-2)(n-1)}{2}x_3^2&=&x_n^2
%\end{eqnarray*}
%
%Therefore, this curve is a complete intersection in $\mathbb{P}^{n-1}$. 
The Jacobian matrix of $C$ evaluated at $[x_1:\ldots :x_n]\in X_{n,k}$ is the $(n-2)\times n$ matrix
$$\frac{k}{2}\left(\begin{smallmatrix}%{ccccccc}
(\alpha-3)(\alpha-2)x_1^{k-1} & 2((\alpha-2)^2-1)x_2^{k-1} & (\alpha-2)(\alpha-1)x_3^{k-1} & 0 & 0 & 0 & 0\cr
2x_1^{k-1} & -6x_2^{k-1} & 6x_3^{k-1} & -2x_4^{k-1} & 0 & \cdots & 0 \cr
6x_1^{k-1} & -16x_2^{k-1} & 12x_3^{k-1} & 0 & -2x_5^{k-1} & \ddots & \vdots \cr
\vdots & \vdots & \vdots & \vdots & \ddots & \ddots & 0\cr
{(n-3)(n-2)}x_1^{k-1} & 2((n-2)^2-1)x_2^{k-1} & {(n-2)(n-1)}x_3^{k-1} & 0 & \cdots & 0 & -2x_n^{k-1}
\end{smallmatrix}\right).$$

A computation similar to the proof of Lemma \ref{sddsmooth} gives us that the curve $C_\alpha$ is smooth.
%
%The equations defining the curve of type (ii) are
%\begin{eqnarray*}
%x_1^2-2x_2^2+x_3^2&=&0\cr
%x_1^2-3x_2^2+3x_3^2&=&x_4^2\cr
%&\vdots&\cr
%\frac{(n-3)(n-2)}{2}x_1^2-((n-2)^2-1)x_2^2+\frac{(n-2)(n-1)}{2}x_3^2&=&x_n^2.
%\end{eqnarray*}
%Hence this curve is a complete intersection in $\mathbb{P}^{n-1}$. 

The curve $C_\infty$ is given by the equations
\begin{eqnarray*}
x_1^k-2x_2^k+x_3^k&=&0\cr
x_1^k-3x_2^k+3x_3^k&=&x_4^k\cr
&\vdots&\cr
\frac{(n-3)(n-2)}{2}x_1^k-((n-2)^2-1)x_2^k+\frac{(n-2)(n-1)}{2}x_3^k&=&x_n^k.
\end{eqnarray*}
The analysis for this curve is similar.
%Therefore it is a complete intersection.
%The Jacobian matrix of $C_{\infty}$ evaluated at $[x_1:\ldots :x_n]\in X_{n,k}$ is the $(n-2)\times n$ matrix
%$$\frac{k}{2}\left(\begin{smallmatrix} %{ccccccc}
%2x_1^{k-1} & -4x_2^{k-1} & 2x_3^{k-1} & 0 & 0 & 0 & 0\cr
%2x_1^{k-1} & -6x_2^{k-1} & 6x_3^{k-1} & -2x_4^{k-1} & 0 & \cdots & 0 \cr
%6x_1^{k-1} & -16x_2^{k-1} & 12x_3^{k-1} & 0 & -2x_5^{k-1} & \ddots & \vdots \cr
%\vdots & \vdots & \vdots & \vdots & \ddots & \ddots & 0\cr
%{(n-3)(n-2)}x_1^{k-1} & 2((n-2)^2-1)x_2^{k-1} & {(n-2)(n-1)}x_3^{k-1} & 0 & \cdots & 0 & -2x_n^{k-1}
%\end{smallmatrix}\right).$$
%The submatrix 
%$$\left(\begin{array}{cc}
%2x_1 & -4x_2\cr
%{(i-3)(i-2)}x_1 & 2((i-2)^2-1)x_2
%\end{array}\right)$$
%has determinant $4(i-3)(2i-3)x_1x_2\neq 0$ when $i\neq 3$ and $x_1x_2\neq 0$.
%The submatrix
%$$\left(\begin{array}{cc}
%2x_1 & 2x_3\cr
%{(i-3)(i-2)}x_1 & {(i-2)(i-1)}x_3
%\end{array}\right)$$
%has determinant $4(i-2)x_1x_3\neq 0$ when $i\neq 2$ and $x_1x_3\neq 0$.
%The submatrix 
%$$\left(\begin{array}{cc}
%-4x_2 & 2x_3\cr
%2((i-2)^2-1)x_2 & {(i-2)(i-1)}x_3
%\end{array}\right)$$
%has determinant $-4(2i-5)(i-1)x_2x_3\neq 0$ when $i\neq 1$ and $x_2x_3\neq 0$.
%which has maximal rank. Therefore the curve $C_{\infty}$ of type (ii) is smooth.
\end{proof}

\begin{lemma}\label{sdred2}
For $3\leq i\leq n$, the curves $(\pi_{i+1}\circ\cdots\circ\pi_n)^*R_i$ are smooth complete intersections, with $R_i$ as defined in Proposition \ref{rir}.
\end{lemma}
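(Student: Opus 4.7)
The plan is to identify the pullback as the reduced complete intersection $\{x_i = 0\}\cap X_{n,k}$, and then extend the Jacobian rank computation of Lemmas \ref{sdrediii} and \ref{sddsmooth} to cover all $4 \leq i \leq n$.

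First, by Proposition \ref{rir} each map $\pi_j$ is ramified only along $R_j = \{x_j=0\}$ in $X_{j,k}$. For $j > i$, the intersection of the successive pullback of $R_i$ with $R_j$ is at most zero-dimensional, so each composition step is unramified in codimension one along this curve. Hence $(\pi_{i+1}\circ\cdots\circ\pi_n)^{*}R_i$ is reduced and cut out in $\mathbb{P}^{n-1}$ by the $n-3$ defining equations $g_1,\ldots,g_{n-3}$ of Lemma \ref{tusfi} together with the single linear form $x_i$, for a total of $n-2$ hypersurfaces in $\mathbb{P}^{n-1}$. This is the expected codimension, so the pullback is a complete intersection.

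For smoothness I would compute the Jacobian of these $n-2$ equations at any point of the curve. The row coming from $x_i$ is the standard vector $e_i$. Since $x_i = 0$ on the curve, every $g_j$-row has entry $0$ in column $i$: for $j = i-3$, the entry $-k x_i^{k-1}$ vanishes; for $j \neq i-3$ column $i$ does not even appear in $g_j$. Thus $e_i$ is linearly independent of the $g_j$-rows, and it remains to show that the $(n-3)\times (n-1)$ minor obtained by deleting the row $e_i$ and column $i$ has rank $n-3$.

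In this minor, the row $g_j$ with $j \neq i-3$ carries a distinctive nonzero entry $-kx_{j+3}^{k-1}$ in column $j+3$ whenever $x_{j+3}\neq 0$, while the row $g_{i-3}$ is supported only in columns $1,2,3$. By Lemma \ref{sdnozero}, at most one coordinate among $x_4,\ldots,x_n$ other than $x_i$ can vanish on the curve, and if one does then $x_1, x_2, x_3$ are all nonzero. The induction of Lemma \ref{sddsmooth} then carries over: rows with distinctive nonzero entries are automatically linearly independent, and any two ``degenerate'' rows supported in columns $1,2,3$ have coefficient vectors proportional to $((m-3)(m-2),\, -2((m-2)^2-1),\, (m-2)(m-1))$ for distinct values of $m \in \{4,\ldots,n\}$. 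Using the factorization $(m-2)^2-1 = (m-3)(m-1)$, a direct check shows that two such vectors are proportional only when the values of $m$ coincide, which settles the non-degeneracy. This short non-proportionality verification is the one genuine technical point; it is independent of the specific vanishing pattern of the coordinates on the curve.
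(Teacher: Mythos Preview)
Your proof is correct and follows essentially the same approach as the paper: identify the pullback as the scheme $\{x_i=0\}\cap X_{n,k}$ cut out by $x_i$ together with the $g_j$, and then verify smoothness by the same Jacobian rank argument as in Lemma~\ref{sddsmooth} (the paper simply says ``interchange the $(i-2)$-nd row and the first row'' and refers back to that computation, while you spell out the non-proportionality of the two possible degenerate rows). Your ramification discussion in the first paragraph is unnecessary, since $(\pi_{i+1}\circ\cdots\circ\pi_n)^*R_i = \mathrm{div}_{X_{n,k}}(x_i)$ follows immediately from $R_i=\mathrm{div}_{X_{i,k}}(x_i)$ and the fact that each $\pi_j$ pulls $x_i$ back to $x_i$; the paper accordingly just writes down the equations without further comment.
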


%\begin{proof} By Proposition \ref{pullback} the equations of $(\pi_{i+1}\circ\cdots\circ\pi_n)^*R_i$ are
%\begin{eqnarray*}
%x_i&=&0\cr 
%x_1^2-3x_2^2+3x_3^2&=&x_4^2\cr
%&\vdots&\cr
%\frac{(n-3)(n-2)}{2}x_1^2-((n-2)^2-1)x_2^2+\frac{(n-2)(n-1)}{2}x_3^2&=&x_n^2.
%\end{eqnarray*}
%Therefore $R_i$ is a complete intersection in $\mathbb{P}^{n-1}$. 
\begin{proof} The curve $(\pi_{i+1}\circ\cdots\circ\pi_n)^*R_i$ is given by the system of equations
\begin{eqnarray*}
x_i&=&0\cr 
x_1^k-3x_2^k+3x_3^k&=&x_4^k\cr
&\vdots&\cr
\frac{(n-3)(n-2)}{2}x_1^k-((n-2)^2-1)x_2^k+\frac{(n-2)(n-1)}{2}x_3^k&=&x_n^k.
\end{eqnarray*}
Therefore it is a complete intersection.

The Jacobian matrix of this curve evaluated at the point $[x_0:\ldots :x_n]\in X_{n,k}$ is the $(n-2)\times n$ matrix
$$\frac{k}{2}\left(\begin{smallmatrix}%{cccccccc}
0 & 0 & 0 & 0 & \cdots & 2/k & 0\cr
2x_1^{k-1} & -6x_2^{k-1} & 6x_3^{k-1} & 2x_4^{k-1} & 0 & \cdots & 0 \cr
6x_1^{k-1} & -16x_2^{k-1} & 12x_3^{k-1} & 0 & 2x_5^{k-1} & \ddots  & \vdots \cr
\vdots & \vdots & \vdots & \vdots & \ddots & \ddots  & 0\cr
{(n-3)(n-2)}x_1^{k-1} & 2((n-2)^2-1)x_2^{k-1} & {(n-2)(n-1)}x_3^{k-1} & 0 & \cdots & 0 & 2x_n^{k-1}
\end{smallmatrix}\right)$$
where the non-zero component on the first row is on the $i$-th column, and $2x_i^{k-1}=0$. 
Interchange the $(i-2)$-nd row and the first row. Then a computation similar to the proof of Lemma \ref{sddsmooth} gives that this matrix has maximal rank, hence the curve $R_i$ is smooth.
\end{proof}
We conclude:
\begin{lemma}\label{sdirred}
The pullbacks under $\rho_{n,k}$ of curves of type (i) with $\alpha\neq 4,\ldots,n$, of type (ii) and of type (iii) are irreducible and reduced. The pullback of curves of type (i) with $\alpha=4,\ldots,n$ is $\rho_{n,k}^*(C_\alpha)=kR_\alpha$, where $R_\alpha$ are irreducible and reduced. %If $k$ is even, then the pullbacks of the curves of type (iv) is the union of the curves $C^k_{\bar{\epsilon}}$ with $\bar{\epsilon}\in G$ which are irreducible and reduced. If $k$ is odd, the pullback of the curve of type (v) is irreducible. 
%5Let $C$ be a curve of type (i), (ii) or (iii) in Lemma \ref{sdomega}. Then $(\rho_{n,k}^*(C))_{\mathrm{red}}$ is smooth and irreducible for every $n\geq 4$.
\end{lemma}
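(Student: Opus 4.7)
The essence of the argument is that Lemmas \ref{sdrediii}, \ref{sdredi} and \ref{sdred2} have already done the heavy lifting by establishing smoothness of the underlying pullback curves; irreducibility and reducedness then follow from standard facts about smooth complete intersections.

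My plan is first to dispatch the cases of type (i) with $\alpha\notin\{4,\ldots,n\}$, type (ii), and type (iii) simultaneously. By Lemmas \ref{sdredi} and \ref{sdrediii}, the pullback curves in these three cases are smooth complete intersections of dimension one in $\mathbb{P}^{n-1}$. Smoothness yields reducedness. For irreducibility I will invoke Hartshorne Ex.II.8.4(c), already used in the proof of Proposition \ref{sdgentype}, which gives connectedness of any complete intersection of positive dimension in projective space; combining this with smoothness shows that each of these pullbacks has a single irreducible component.

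For the remaining case, type (i) with $\alpha\in\{4,\ldots,n\}$, I will factor the map as $\rho_{n,k}=(\pi_{\alpha+1}\circ\cdots\circ\pi_n)\circ\rho_{\alpha,k}$. Applying Lemma \ref{lacosa} to $\rho_{\alpha,k}$ (in place of $\rho_{n,k}$) gives $\rho_{\alpha,k}^{*}(C_\alpha)=k\,\mathrm{div}_{X_{\alpha,k}}(x_\alpha)$ on $X_{\alpha,k}$. Pulling back under the tower $\pi_{\alpha+1}\circ\cdots\circ\pi_n$ and observing that each intermediate $\pi_i$ is ramified only along $\mathrm{div}_{X_{i,k}}(x_i)$, which is a curve distinct from the pullback of $\mathrm{div}_{X_{\alpha,k}}(x_\alpha)$, the multiplicity is preserved. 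We therefore obtain $\rho_{n,k}^{*}(C_\alpha)=k R_\alpha$ as Cartier divisors on $X_{n,k}$, where $R_\alpha$ is precisely the curve $(\pi_{\alpha+1}\circ\cdots\circ\pi_n)^{*}\mathrm{div}_{X_{\alpha,k}}(x_\alpha)$ treated in Lemma \ref{sdred2}. By that lemma, $R_\alpha$ is a smooth complete intersection, and hence by the same Hartshorne-plus-smoothness argument it is irreducible and reduced.

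The only genuine subtlety I expect is the multiplicity bookkeeping in the last case, namely checking that no extra ramification of the auxiliary maps $\pi_{\alpha+1},\ldots,\pi_n$ contributes along $R_\alpha$. This is handled by the explicit description of the ramification locus given in Proposition \ref{rir}: distinct coordinate divisors $\{x_i=0\}$ define distinct curves on $X_{n,k}$ (by Lemma \ref{sdnozero}), so the further pullbacks are étale at the generic point of $R_\alpha$.
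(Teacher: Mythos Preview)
Your proposal is correct and follows essentially the same approach as the paper: smoothness plus complete-intersection plus Hartshorne Ex.~II.8.4(c) for the generic cases, and the factorisation $\rho_{n,k}=(\pi_{\alpha+1}\circ\cdots\circ\pi_n)\circ\rho_{\alpha,k}$ together with Lemma~\ref{lacosa} and Lemma~\ref{sdred2} for $\alpha\in\{4,\ldots,n\}$. The only difference is that your ramification bookkeeping in the last paragraph is superfluous: since pullback of Cartier divisors is linear, $(\pi_{\alpha+1}\circ\cdots\circ\pi_n)^*(kR_\alpha)=k\,(\pi_{\alpha+1}\circ\cdots\circ\pi_n)^*R_\alpha$ automatically, and Lemma~\ref{sdred2} already tells you that $(\pi_{\alpha+1}\circ\cdots\circ\pi_n)^*R_\alpha$ is smooth, hence reduced, without any need to check \'etaleness at the generic point.
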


\begin{proof} 
The pullbacks of the curves of type (i) with $\alpha\neq 4,\ldots n$, of type (ii) and of type (iii) are smooth and complete intersections by Lemma \ref{sdrediii} and Lemma \ref{sdredi}. %From the proofs of these Lemmas, we know that the pullbacks $\rho_{n,k}^*(C_\alpha)$ of curves of type (i) (with $\alpha\neq 4,\ldots,n$), (ii) and (iii) are smooth complete intersections. 
From \cite{H}, Ex.II.8.4(c) we have that these curves are connected, and therefore irreducible.
By Lemma \ref{lacosa} we have that $$\rho_{n,k}^*C_i=(\pi_{i+1}\circ \cdots\circ \pi_n)^*\rho_{i,k}^*C_i=k(\pi_{i+1}\circ\cdots\circ \pi_n)^*R_i$$ when $4\leq i\leq n$. Since the pullbacks $(\pi_{i+1}\circ\cdots\circ\pi_n)^*R_i$ are smooth and complete intersections by Lemma \ref{sdred2}, we obtain that they are irreducible by \cite{H}, Ex.II.8.4(c). Thus $(\rho_{n,k}^*(C_i))_{\mathrm{red}}=(\pi_{i+1}\circ\cdots\circ\pi_n)^*R_i$ is smooth and irreducible.
%From Lemma \ref{tipo4}, we know that the curves of type (iv) are irreducible and reduced. By Lemma \ref{jeloucito}, we have then that the curve of type (v) is irreducible and reduced.
\end{proof}

\section{Pullbacks of curves of type $(iv)$ and $(v)$}

We now study the pullbacks of the curves of type (iv), when $k$ is an even number, and of curves of type (v), when $k$ is odd.

For $k$ an even integer, let $C^k_{\epsilon_2,\ldots,\epsilon_n}\subseteq \mathbb{P}^{n-1}$ be defined by 
\begin{eqnarray*}
-x_1^{k/2}+2\epsilon_2x_2^{k/2}&=&\epsilon_3x_3^{k/2}\cr
&\vdots&\cr
-(n-2)x_1^{k/2}+(n-1)\epsilon_2x_2^{k/2}&=&\epsilon_nx_n^{k/2}.
\end{eqnarray*}

\begin{remark}
Note that for $k=2$, the curve $C^2_{\epsilon_2,\ldots,\epsilon_n}$ is the line parametrized by $$[s+t:\epsilon_2(2s+t):\ldots:\epsilon_n(ns+t)]$$ with $[s:t]\in\mathbb{P}^1$.
\end{remark}

It will be convenient to write $G_{n}$ for the multiplicative group $\left\{\pm 1\right\}^{n-1}$, and for $\bar{\epsilon}=(\epsilon_2,\ldots,\epsilon_n)\in G_n$ we write $C^k_{\bar{\epsilon}}=C^k_{\epsilon_2,\ldots,\epsilon_n}$.

\begin{lemma}\label{notiene}
For $k$ even and $\bar{\epsilon}\in G_n$, the scheme $C^k_{\bar{\epsilon}}$ is a smooth irreducible curve contained in $X_{n,k}$. These curves satisfy $\pi_n(C^k_{\epsilon_2,\ldots,\epsilon_n})=C^k_{\epsilon_2,\ldots,\epsilon_{n-1}}$. In addition, if $\bar{\epsilon}\neq \bar{\epsilon}'$, then $C^k_{\bar{\epsilon}}\neq C^k_{\bar{\epsilon}'}$.\end{lemma}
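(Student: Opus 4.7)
The substitution $y_m := \epsilon_m x_m^{k/2}$ (with $\epsilon_1 = 1$) linearizes the defining equations of $C^k_{\bar\epsilon}$ into the arithmetic-progression condition $y_m = (2-m)y_1 + (m-1)y_2$ for $m = 3, \ldots, n$, and all four assertions flow from this reformulation. For the containment in $X_{n,k}$, I will write $y_m = a + (m-1)d$ and match the coefficients of $a^2$, $ad$, $d^2$ to verify the polynomial identity $y_m^2 = \frac{(m-3)(m-2)}{2} y_1^2 - ((m-2)^2-1) y_2^2 + \frac{(m-2)(m-1)}{2} y_3^2$; substituting $y_i^2 = x_i^k$ recovers precisely the relations of Lemma \ref{tusfi}, so $C^k_{\bar\epsilon} \subseteq X_{n,k}$.

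For smoothness I will check that the $(n-2) \times n$ Jacobian of the defining equations has rank $n-2$ at every point. The combinatorial key is that at most one coordinate $x_m$ with $m \geq 3$ can vanish: if both $x_{m_0}$ and $x_{m_1}$ ($m_0 \neq m_1$, both in $\{3, \ldots, n\}$) vanished, the two corresponding defining equations would give a $2 \times 2$ linear system in $(x_1^{k/2}, \epsilon_2 x_2^{k/2})$ with determinant $m_1 - m_0 \neq 0$, forcing $x_1 = x_2 = 0$ and hence all $x_m = 0$, which is projectively impossible. Thus the columns indexed by $3, \ldots, n$ contribute rank at least $n-3$ (each having a single nonzero entry placed in distinct rows), and column $1$ or column $2$ supplies the missing unit rank. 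This exhibits $C^k_{\bar\epsilon}$ as a smooth complete intersection of pure dimension one; Ex.II.8.4(c) of \cite{H} then gives connectedness, and together with smoothness this yields irreducibility.

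For the projection statement, the first $n-3$ defining equations of $C^k_{\epsilon_2,\ldots,\epsilon_n}$ coincide with the defining equations of $C^k_{\epsilon_2,\ldots,\epsilon_{n-1}}$ in one fewer variable, so $\pi_n$ carries $C^k_{\epsilon_2,\ldots,\epsilon_n}$ into $C^k_{\epsilon_2,\ldots,\epsilon_{n-1}}$; surjectivity follows because for any point of the target the value of $x_n$ is determined up to a $(k/2)$-th root of $\epsilon_n\bigl(-(n-2)x_1^{k/2} + (n-1)\epsilon_2 x_2^{k/2}\bigr)$, which always exists over $\mathbb{C}$. For distinctness, if $\bar\epsilon \neq \bar\epsilon'$ I take the least index $m_0$ at which they differ: subtracting the two $m_0$-th defining equations (or the two $m=3$ equations when $m_0 = 2$) produces a nontrivial algebraic relation that cuts a proper closed subscheme of the irreducible curve $C^k_{\bar\epsilon}$, so $C^k_{\bar\epsilon} \neq C^k_{\bar\epsilon'}$.

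The only step requiring genuine care is the Jacobian rank verification, and even there the observation about simultaneous vanishing of coordinates keeps the case analysis very short; the remaining ingredients reduce to the identity of Lemma \ref{tusfi} and to standard complete-intersection connectedness.
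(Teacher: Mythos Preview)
Your proof is correct and follows essentially the same strategy as the paper: verify containment by squaring the defining relations to recover the equations of Lemma \ref{tusfi}, check the Jacobian for smoothness, deduce irreducibility from smooth complete intersection plus connectedness, and handle $\pi_n$ and distinctness by elementary inspection of the equations. Your linearizing substitution $y_m=\epsilon_m x_m^{k/2}$ and your explicit ``at most one $x_m$ vanishes'' argument make the Jacobian step cleaner than the paper's appeal to the computation in Lemma \ref{sddsmooth}, and your distinctness argument via subtracting equations differs cosmetically from the paper's use of the linear projection $[x_1:\ldots:x_n]\mapsto[x_1:x_2:x_j]$, but these are minor variations on the same route.
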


\begin{proof} Let us first check that $C^k_{\epsilon_2,\ldots,\epsilon_n}\subseteq X_{n,k}$. From the first equation defining $C^k_{\epsilon_2,\ldots,\epsilon_n}$ we deduce $$x_1^k-4\epsilon_2x_1^{k/2}x_2^{k/2}+4x_2^k=x_3^k,$$
and more generally, the $(j-2)$-th equation gives 
$$(j-2)^2x_1^k-2(j-2)(j-1)\epsilon_2x_1^{k/2}x_2^{k/2}  +(j-1)^2x_2^k =x_j^k.$$
Hence for every $3\leq j\leq n$, the points of $C^k_{\epsilon_2,\ldots,\epsilon_n}$ satisfy
%%% HASTA ACA
%the first equation of $C^k_{\epsilon_2,\ldots,\epsilon_n}$ we have $x_1^k+4x_2^k+4\epsilon_2x_1^{k/2}x_2^{k/2}=x_3^k$. Squaring $-(n-2)x_1^{k/2}+(n-1)\epsilon_2x_2^{k/2}=\epsilon_nx_n^{k/2}$ and replacing $4\epsilon_2x_1^{k/2}x_2^{k/2}$ by $x_3^k-x_1^k-4x_2^k$ we obtain
$$\frac{(j-3)(j-2)}{2}x_1^k-((j-2)^2-1)x_2^k+\frac{(j-2)(j-1)}{2}x_3^k=x_j^k,$$ and so $C^k_{\epsilon_2,\ldots,\epsilon_n}$ is in $X_{n,k}$.

%We have $\pi_n(C^k_{\epsilon_2,\ldots,\epsilon_n})\subseteq C^k_{\epsilon_2,\ldots,\epsilon_{n-1}}$. Since for any $P=[x_1:\ldots:x_{n-1}]\in C^k_{\epsilon_2,\ldots,\epsilon_{n-1}}$  we have that $Q=[x_0:\ldots:x_{n-1}]\in C^k_{\epsilon_2,\ldots,\epsilon_n}$ with $\epsilon_nx_n^{k/2}=-(n-2)x_1^{k/2}+(n-1)\epsilon_2x_2^{k/2}$ is a preimage of $P$, we obtain that $\pi_n(C^k_{\epsilon_2,\ldots,\epsilon_n})\supseteq C^k_{\epsilon_2,\ldots,\epsilon_{n-1}}$, thus
It is easy to check that $\pi_n(C^k_{\epsilon_2,\ldots,\epsilon_n})=C^k_{\epsilon_2,\ldots,\epsilon_{n-1}}$. Since $C^k_{\epsilon_2,\epsilon_3}\subseteq\mathbb{P}^2$ is a smooth irreducible curve, one sees by induction (using the maps $\pi_n$) that $C^k_{\epsilon_2,\ldots,\epsilon_n}$ has all its irreducible components of dimension one. In particular, it is a complete intersection in $\mathbb{P}^{n-1}$.
%From this, we also get that every component of $C^k_{\epsilon_2,\ldots,\epsilon_n}$ has dimension less than or equal to $1$ since $\pi_n(C^k_{\epsilon_2,\ldots,\epsilon_n})\neq \pi_n(X_{n,k})=X_{n-1,k}$. Every irreducible component of $C^k_{\epsilon_2,\ldots,\epsilon_n}$ has dimension at least one because it is defined by $n-2$ equations on $\mathbb{P}^{n-1}$, and hence $C^k_{\epsilon_2,\ldots,\epsilon_n}$ is a complete intersection. 

The Jacobian matrix of the equations defining $C^k_{\epsilon_2,\ldots,\epsilon_n}$ evaluated at a point $[x_1:\ldots:x_n]$ is the following matrix: 
$$\frac{k}{2}\left(\begin{smallmatrix}
-x_1^{\frac{k}{2}-1} & 2\epsilon_2x_2^{\frac{k}{2}-1} & -\epsilon_3x_3^{\frac{k}{2}-1} & 0 & \cdots & 0\cr
-2x_1^{\frac{k}{2}-1} & 3\epsilon_2x_2^{\frac{k}{2}-1} & 0 & -\epsilon_4x_4^{\frac{k}{2}-1} & \ddots & \vdots\cr
\vdots & \vdots & \vdots & \ddots & \ddots & 0\cr
-(n-2)x_1^{\frac{k}{2}-1} & (n-1)\epsilon_2x_2^{\frac{k}{2}-1} & 0 & \cdots & 0 & -\epsilon_nx_n^{\frac{k}{2}-1}
\end{smallmatrix}\right).$$
As in the proof of Lemma \ref{sddsmooth}, one verifies that it has maximal rank at every point of $C^k_{\epsilon_2,\ldots,\epsilon_n}$. Since $C^k_{\epsilon_2,\ldots,\epsilon_n}$ is a smooth complete intersection, we obtain that it is irreducible.

Finally, if $\bar{\epsilon}\neq\bar{\epsilon}'$, the curves $C^k_{\bar{\epsilon}}$ and $C^k_{\bar{\epsilon}'}$ have different images under the linear projection $[x_1:\ldots:x_n]\mapsto[x_1:x_2:x_j]$ for suitable choice of $j$.
\end{proof}

%\begin{lemma}\label{distinto}
%If $\epsilon_i\neq \epsilon'_i$ for some $2\leq i\leq n$, then the curves $C^k_{\epsilon_2,\ldots,\epsilon_n}$ and $C^k_{\epsilon_2',\ldots,\epsilon_n'}$ are distinct.
%\end{lemma}

%\begin{proof}
%Let $i'=i$. If $i=1$, and $i'=2$ if $i=1$. The linear projection $\mathbb{P}^{n-1}\dashrightarrow \mathbb{P}^2$, $[x_1:\ldots:x_n]\mapsto [x_1:x_2:x_3]$ maps $C^k_{\epsilon_2,\ldots,\epsilon_n}$ and $C^k_{\epsilon_2',\ldots,\epsilon_n'}$ to two different Fermat type curves.
%Suppose that $\epsilon_2=\epsilon_2'$. Let $\zeta$ be a primitive $\frac{k}{2}$-th root of $-1$. For $3\leq j\leq n$ let $$\alpha_j=\left\{\begin{array}{cl}
%0 & \mbox{if }\epsilon_2\epsilon_j=1\cr
%1 & \mbox{if }\epsilon_2\epsilon_j=-1.\end{array}\right.$$
%Then the point $P_{\epsilon_2,\ldots,\epsilon_n}:=[0:1:\sqrt[k/2]{2}\zeta^{\alpha_3}:\cdots:\sqrt[k/2]{n-1}\zeta^{\alpha_n}]$ will satisfy the equations $-(n-2)x_1^{k/2}+(n-1)\epsilon_2x_2^{k/2}=\epsilon_nx_n^{k/2}$ for $3\leq j\leq n$, but it will not satisfy $(i-2)x_1^{k/2}+(i-1)\epsilon_2'x_2^{k/2}=\epsilon_i'x_i^{k/2}$.
%If $\epsilon_j=\epsilon_j'$ for some $3\leq j\leq n$, a similar argument holds.
%Now suppose that $\epsilon_j=-\epsilon_j'$ for all $2\leq j\leq n$. Let $\zeta$ be a primitive $\frac{k}{2}$-th root of $-1$. Then the point $[1:0:\sqrt[k/2]{\epsilon_3}\zeta:\cdots:\sqrt[k/2]{(n-2)\epsilon_n}\zeta]$ is a point in $C^k_{\epsilon_2,\ldots,\epsilon_n}$ which is not in $C^k_{\epsilon_2',\ldots,\epsilon_n'}$.
%\end{proof}

\begin{lemma}\label{tipo4} 
Let $k$ be an even integer, and let $\epsilon_2,\epsilon_3\in\left\{\pm 1\right\}$. Then we have the following equality of divisors in $X_{n,k}$:  $$\rho_{n,k}^*C^k_{\epsilon_2,\epsilon_3}=\sum_{\epsilon_4,\ldots,\epsilon_n\in \left\{\pm 1\right\}}C^k_{\epsilon_2,\ldots,\epsilon_n}.$$
%The pullbacks under $\rho_{n,k}$ of the curves of type (iv), QQQQ  $-x_1^{k/2}+2\epsilon_2 x_2^{k/2}=\epsilon_3x_3^{k/2}$ are the $2^{n-1}$ smooth complete intersection curves $C^k_{\epsilon_2,\ldots,\epsilon_n}$. %$$[s+t:\pm(2s+t):\cdots :\pm(ns+t)]\mbox{ with }[s:t]\in\mathbb{P}^1.$$ These are smooth irreducible curves of genus $0$.
\end{lemma}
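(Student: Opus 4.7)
The plan is to verify the set-theoretic inclusion of each $C^k_{\epsilon_2,\ldots,\epsilon_n}$ into the support of $\rho_{n,k}^*C^k_{\epsilon_2,\epsilon_3}$, and then deduce the equality of divisors (components and multiplicities) by a degree count, using that $\rho_{n,k}$ is induced by linear projections and that the curves involved are complete intersections.

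First I would observe that, by the definitions, the very first equation cutting out $C^k_{\epsilon_2,\ldots,\epsilon_n}$ in $X_{n,k}$ is $x_1^{k/2}-2\epsilon_2 x_2^{k/2}+\epsilon_3 x_3^{k/2}=0$, which is precisely the pulled back equation of $C^k_{\epsilon_2,\epsilon_3}\subset X_{3,k}=\mathbb{P}^2$ under $\rho_{n,k}$. Hence, for each choice of $(\epsilon_4,\ldots,\epsilon_n)\in\{\pm 1\}^{n-3}$, the irreducible curve $C^k_{\epsilon_2,\ldots,\epsilon_n}$ is set-theoretically contained in the support of the Cartier divisor $\rho_{n,k}^*C^k_{\epsilon_2,\epsilon_3}$. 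By Lemma \ref{notiene} these $2^{n-3}$ curves are pairwise distinct and irreducible, so each appears as a component of $\rho_{n,k}^*C^k_{\epsilon_2,\epsilon_3}$ with some multiplicity $m_{\bar\epsilon}\geq 1$, and we may write
$$\rho_{n,k}^*C^k_{\epsilon_2,\epsilon_3}=\sum_{\epsilon_4,\ldots,\epsilon_n\in\{\pm 1\}} m_{\bar\epsilon}\, C^k_{\epsilon_2,\ldots,\epsilon_n}+E$$
for some effective divisor $E$ supported away from these curves.

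To promote this inclusion to an equality, I would compare degrees with respect to the hyperplane class $H$ on $X_{n,k}$. Since each $\pi_i$ is the restriction of a linear projection, we have $\rho_{n,k}^*\mathcal{O}_{\mathbb{P}^2}(1)=\mathcal{O}_{X_{n,k}}(1)$, and $X_{n,k}$ is a complete intersection of $n-3$ hypersurfaces of degree $k$ (Proposition \ref{sdgentype}), so $H^2=\deg X_{n,k}=k^{n-3}$. As $C^k_{\epsilon_2,\epsilon_3}\subset\mathbb{P}^2$ has degree $k/2$, the divisor $\rho_{n,k}^*C^k_{\epsilon_2,\epsilon_3}$ has class $(k/2)H$, and therefore degree $(k/2)\,k^{n-3}$. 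On the other hand, each $C^k_{\epsilon_2,\ldots,\epsilon_n}$ is a complete intersection in $\mathbb{P}^{n-1}$ of $n-2$ hypersurfaces of degree $k/2$ (by its defining equations, together with Lemma \ref{notiene} confirming it is one-dimensional and reduced), so by Bezout it has degree $(k/2)^{n-2}$ in $\mathbb{P}^{n-1}$, hence the same degree with respect to $H$. Summing over the $2^{n-3}$ choices of $(\epsilon_4,\ldots,\epsilon_n)$ gives
$$2^{n-3}\cdot(k/2)^{n-2}=\tfrac{k^{n-2}}{2}=(k/2)\,k^{n-3},$$
which already matches the total degree of $\rho_{n,k}^*C^k_{\epsilon_2,\epsilon_3}$.

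Taking degrees in the displayed equation above therefore forces $m_{\bar\epsilon}=1$ for every $\bar\epsilon$ and $\deg E=0$, i.e.\ $E=0$, establishing the claimed equality of divisors. The only step that requires any thought is the degree computation; in particular, confirming that each $C^k_{\epsilon_2,\ldots,\epsilon_n}$ is cut out cleanly by $n-2$ hypersurfaces of degree $k/2$ (so Bezout applies without excess components) relies on the smoothness and irreducibility already recorded in Lemma \ref{notiene}. Once that is in hand, the argument is purely formal.
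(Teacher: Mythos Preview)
Your proof is correct and follows the same overall strategy as the paper: establish that the $2^{n-3}$ irreducible curves $C^k_{\epsilon_2,\ldots,\epsilon_n}$ all occur in the pullback, and then rule out extra components or multiplicities by a degree count. The only difference is in how the degree count is executed: the paper observes that each $\pi_j$ restricted to $C^k_{\epsilon_2,\ldots,\epsilon_j}$ has degree $k/2$, hence $\rho_{n,k}$ restricted to each $C^k_{\bar\epsilon}$ has degree $(k/2)^{n-3}$, and then compares the number of preimages of a general point of $C^k_{\epsilon_2,\epsilon_3}$ on both sides; you instead intersect with the hyperplane class $H$ and use B\'ezout on the complete intersection $C^k_{\epsilon_2,\ldots,\epsilon_n}$ to obtain $\deg C^k_{\bar\epsilon}=(k/2)^{n-2}$. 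Both computations unwind to the identity $2^{n-3}(k/2)^{n-2}=(k/2)k^{n-3}$, so the arguments are equivalent rephrasings of the same count.
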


\begin{proof} Recall from Lemma \ref{notiene} that $\pi_j(C^k_{\epsilon_2,\ldots,\epsilon_j})=C^k_{\epsilon_2,\ldots,\epsilon_{j-1}}$ for all $4\leq j\leq n$. Moreover, from the equations defining these curves, we see that the map $\pi_{j|C^k_{\epsilon_2,\ldots,\epsilon_j}}$ is finite of degree $k/2$.
Therefore $\rho_{n,k}(C^k_{\epsilon_2,\ldots,\epsilon_n})=C^k_{\epsilon_2,\epsilon_3}$ and $$(\rho_{n,k})_{|C^k_{\epsilon_2,\ldots,\epsilon_n}}:C^k_{\epsilon_2,\ldots,\epsilon_n}\to C^k_{\epsilon_2,\epsilon_3}$$ is finite of degree $(k/2)^{n-2}$. 
%For fixed $\epsilon_2,\epsilon_3$, the image of any $C^k_{\epsilon_2,\ldots,\epsilon_n}$ under $\rho_{n,k}$ is $C^k_{\epsilon_2,\epsilon_3}$. Now we want to prove that these irreducible curves are all the preimages of $C^k_{\epsilon_2,\epsilon_3}$. The restriction of the morphism $\mathbb{P}^n\setminus[0:\ldots:0:1]\to\mathbb{P}^{n-1}$ to $C^k_{\epsilon_2,\ldots,\epsilon_n}$ gives a morphism $C^k_{\epsilon_2,\ldots,\epsilon_n}\to C^k_{\epsilon_2,\ldots,\epsilon_{n-1}}$ which has degree $\frac{k}{2}$. The composition of these restrictions has degree $\left(\frac{k}{2}\right)^{n-2}$, hence $\deg(\rho_{n,k|C^k_{\epsilon_2,\ldots,\epsilon_n}})=\left(\frac{k}{2}\right)^{n-2}$. 

From Lemma \ref{notiene}, all the curves $C^k_{\epsilon_2,\ldots,\epsilon_n}$ for $\epsilon_4,\ldots,\epsilon_n\in\{\pm 1\}$ are distinct. %all the components of the preimage of $C^k_{\epsilon_2,\epsilon_3}$ are distinct.
Thus there are $2^{n-2}$ distinct curves $C^k_{\epsilon_2,\ldots,\epsilon_n}$ in the preimage of $C^k_{\epsilon_2,\epsilon_3}$. 
At this point, we know that
$$
\rho_{n,k}^*C^k_{\epsilon_2,\epsilon_3}\ge \sum_{\epsilon_4,\ldots,\epsilon_n\in \left\{\pm 1\right\}}C^k_{\epsilon_2,\ldots,\epsilon_n}.
$$
The previous inequality of divisors is actually an equality because a general point in $C^k_{\epsilon_2,\epsilon_3}$ has exactly $k^{n-2}$ preimages by $\rho_{n,k}$ on each side of the inequality.% in the scheme associated
\end{proof}

%\begin{proof} First note that $[s+t:\pm(2s+t):\ldots: \pm(ns+t)]$ is in $X_n$ because taking squares we obtain a sequence consisting of squares of elements in arithmetic progression (which have constant second differences). 
%
%The image under $\pi_n$ of the curve $[s+t:\epsilon_2(2s+t):\ldots: \epsilon_n(ns+t)]$ (with $\epsilon_i\in\left\{\pm 1\right\}$) is the curve $[s+t:\epsilon_2(2s+t):\ldots: \epsilon_{n-1}((n-1)s+t)]$. Since $\pi_n$ is of degree $2$ and there are two different curves mapping onto $[s+t:\epsilon_2(2s+t):\ldots: \epsilon_{n-1}((n-1)s+t)]$, namely $[s+t:\epsilon_2(2s+t):\ldots: (ns+t)]$ and $[s+t:\epsilon_2(2s+t):\ldots: -(ns+t)]$, we obtain from Proposition \ref{ordg} that these two curves are all the preimage curves of $[s+t:\epsilon_2(2s+t):\ldots: \epsilon_{n-1}((n-1)s+t)]$.
%
%Write $C_{\epsilon_2,\ldots,\epsilon_n}$ for the curve in $X_n$ given by $[s+t:\epsilon_2(2s+t):\ldots: \epsilon_n(ns+t)]$ and $\epsilon_i\in\left\{\pm 1\right\}$.
%The image under $\rho_{n,k}$ of $C_{\epsilon_2,\ldots,\epsilon_n}$ is $[s+t:\epsilon_2 (2s+t):\epsilon_3 (3s+t)]$. The points on this curve satisfy the equation $x_1-\epsilon_2 2x_2+\epsilon_3 x_3=0$, which is the equation of one of the curves of type (iv).
%The curves $C_{\epsilon_2,\ldots,\epsilon_n}$ are irreducible smooth curves of genus zero because they are isomorphic images of $\mathbb{P}^1$ (the inverse of this map is the morphism $\eta:C_{\epsilon_2,\ldots,\epsilon_n}\to \mathbb{P}^1$, mapping $[x_1:x_2:\cdots:x_n]$ to $[\epsilon_2x_2-x_1:2x_1+\epsilon_2x_2]$).
%\end{proof}

For $k$ odd, let $C^{(v)}_{n,k}$ be the pull-back under $\rho_{n,k}$ to $X_{n,k}$ of the curve of type $(v)$ of $X_{3,k}$. We have the following:

\begin{lemma}\label{jeloucito}
Let $k$ be odd. Then $C^{(v)}_{n,k}\subseteq X_{n,k}$ is a reduced and irreducible curve. %, and is given by the equations 
%\begin{eqnarray*}
%x_1^{2k}-8x_1^kx_2^k-2x_1^kx_3^k+16x_2^{2k}-8x_2^kx_3^k+x_3^{2k}&=&0\cr
%x_1^k-3x_2^k+3x_3^k&=&x_4^k\cr
%&\vdots&\cr
%x_{n-3}^k-3x_{n-2}^k+3x_{n-1}^k&=&x_n^k.
%\end{eqnarray*}
Moreover, the $2^n$ curves $C^{2k}_{\epsilon_2,\ldots,\epsilon_n}\subseteq X_{n,2k}$ are isomorphic to each other, and are birational to $C^{(v)}_{n,k}$.
\end{lemma}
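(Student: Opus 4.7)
The plan is to compare $C^{(v)}_{n,k}$ with the curves $C^{2k}_{\bar{\epsilon}}$ via the coordinate-squaring morphism
$$\sigma_n \colon X_{n,2k} \longrightarrow X_{n,k}, \qquad [x_1:\cdots:x_n] \longmapsto [x_1^2:\cdots:x_n^2].$$
Setting $y_i = x_i^2$ turns the defining equations of $X_{n,k}$ (in the $y_i$) into those of $X_{n,2k}$, so $\sigma_n$ is a well-defined morphism. A general point of $X_{n,k}$ has $2^{n-1}$ preimages, obtained by sign choices modulo an overall sign, so $\sigma_n$ is finite and Galois of degree $2^{n-1}$ with group $(\Z/2)^{n-1}$ acting by coordinatewise sign changes. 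The identity $\rho_{n,k}\circ\sigma_n = \sigma_3\circ\rho_{n,2k}$ is immediate.

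The first step is to compute $\sigma_3^* C^{(v)}_{3,k}$. Substituting $y_i=x_i^2$ in the defining equation of $C^{(v)}_{3,k}$ yields the homogeneous polynomial
$$x_1^{4k} - 8x_1^{2k}x_2^{2k} - 2x_1^{2k}x_3^{2k} + 16x_2^{4k} - 8x_2^{2k}x_3^{2k} + x_3^{4k},$$
which (after dehomogenizing by $x_3$) is the polynomial $Q_{2k}$ from Remark \ref{elpoli}. Since $2k$ is even, that remark factors it as $\prod_{\epsilon_2,\epsilon_3\in\{\pm 1\}}(x_1^k - 2\epsilon_2 x_2^k + \epsilon_3 x_3^k)$, so $\sigma_3^* C^{(v)}_{3,k} = \sum_{\epsilon_2,\epsilon_3} C^{2k}_{\epsilon_2,\epsilon_3}$ as divisors, with each component appearing with multiplicity one. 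Combining this identity with the commutative diagram and Lemma \ref{tipo4}, I would deduce
$$\sigma_n^* C^{(v)}_{n,k} \;=\; \rho_{n,2k}^*\,\sigma_3^* C^{(v)}_{3,k} \;=\; \sum_{\bar{\epsilon}\in G_n} C^{2k}_{\bar{\epsilon}},$$
again with each component appearing with multiplicity one.

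Irreducibility, reducedness, the isomorphisms, and birationality then follow from Galois theory. A direct computation—using crucially that $k$ is odd, so $x_i^k \mapsto \delta_i x_i^k$ under $x_i\mapsto\delta_i x_i$—shows that the Galois element $\bar{\delta}$ sends $C^{2k}_{\bar{\epsilon}}$ to $C^{2k}_{\bar{\epsilon}'}$ with $\epsilon_j' = \delta_1\delta_j\epsilon_j$. Thus $(\Z/2)^{n-1}$ acts transitively on $\{C^{2k}_{\bar{\epsilon}}\}_{\bar{\epsilon}\in G_n}$, providing the stated isomorphisms between any two of these curves. For irreducibility of $C^{(v)}_{n,k}$, the image $\sigma_n(C^{2k}_{\bar{\epsilon}})$ is an irreducible curve inside $C^{(v)}_{n,k}$, independent of $\bar{\epsilon}$ by Galois invariance, and the set-theoretic union of its preimages covers all of $\sigma_n^{-1}(C^{(v)}_{n,k})$; hence this common image equals $C^{(v)}_{n,k}$. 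Reducedness follows because $\sigma_n$ is étale at a general point of $C^{(v)}_{n,k}$—its branch locus is the union of coordinate axes, and the defining polynomial of $C^{(v)}_{3,k}$ restricts to a non-zero polynomial on each axis, so $C^{(v)}_{n,k}$ does not contain any such axis—combined with the multiplicity-one pullback above. Finally, by Galois symmetry the $2^{n-1}$ preimages of a general point of $C^{(v)}_{n,k}$ are distributed one per curve across the $|G_n|=2^{n-1}$ curves $C^{2k}_{\bar{\epsilon}}$, so each $\sigma_n|_{C^{2k}_{\bar{\epsilon}}}$ has degree one and is therefore birational onto $C^{(v)}_{n,k}$.

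The main technical obstacles will be the explicit factorization step via Remark \ref{elpoli} (ensuring that the exponent $2k$ is indeed the relevant one after the squaring substitution) and the careful bookkeeping needed to verify transitivity of the Galois action on the labels $\bar{\epsilon}$; once the divisor identity $\sigma_n^* C^{(v)}_{n,k} = \sum_{\bar{\epsilon}\in G_n} C^{2k}_{\bar{\epsilon}}$ with multiplicity one is in hand, the remainder is essentially formal.
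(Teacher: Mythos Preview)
Your proposal is correct and follows essentially the same route as the paper: the same squaring morphism $X_{n,2k}\to X_{n,k}$ (called $F_n$ in the paper), the same commutative square with the $\rho$'s, the same factorization of the type-(v) equation via Remark~\ref{elpoli} combined with Lemma~\ref{tipo4} to obtain the multiplicity-one pullback $\sum_{\bar\epsilon}C^{2k}_{\bar\epsilon}$, and the same Galois-transitivity argument to deduce irreducibility, the mutual isomorphisms, and degree-one (hence birational) restriction. Your justification of reducedness via \'etaleness away from the coordinate axes is a slightly more explicit version of the paper's observation that the pullback is a reduced divisor.
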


\begin{proof}
Let $F_n:X_{n,2k}\to X_{n,k}$ be the morphism given by $[x_1:\ldots:x_n]\mapsto [x_1^2:\ldots:x_n^2]$. This morphism is finite of degree $2^{n-1}$.

The group $G_n=\{\pm 1\}^{n-1}$ has order $2^{n-1}$ and since $k$ is odd, it acts faithfully on $X_{n,2k}$ by $\tau (X_{n,2k})=X_{n,2k}$. Moreover, this action satisfies $F_n(\tau\cdot [x_1:\ldots:x_n])=F_n([x_1:\ldots:x_n])$. So $F_n:X_{n,2k}\to X_{n,k}$ is Galois with Galois group $G_n$.
We have the commutative diagram
$$\begin{CD}
X_{n,2k} @>{F_n}>> X_{n,k}\\
@V{\rho_{n,2k}}VV @VV{\rho_{n,k}}V\\
X_{3,2k} @>>{F_3}> X_{3,k}
\end{CD}$$
The curve $C_{3,k}^{(v)}\subseteq X_{3,k}$ is defined by the homogenization of the polynomial $Q(x_1,x_2)$ from Remark \ref{elpoli}, hence it is irreducible. Factoring $Q(x_1^2,x_2^2)$ we get $F_2^*C_{3,k}^{(v)}=\sum_{\bar{\epsilon}\in G_3}C_{\bar{\epsilon}}^{2k}$. By Lemma \ref{tipo4} we obtain
$$(F_2\circ \rho_{n,2k})^*C^{(v)}_{3,k}=\sum_{\epsilon_2,\epsilon_3\in \{\pm 1\}}\sum_{\epsilon_4,\ldots,\epsilon_n\in \{\pm 1\}}C^{2k}_{\epsilon_2,\ldots,\epsilon_n}=\sum_{\bar{\epsilon}\in G_n}C^{2k}_{\bar{\epsilon}}.$$
By definition $C_{n,k}^{(v)}=\rho_{n,k}^*C_{3,k}^{(v)}$, so by the previous commutative diagram 
\begin{equation}\label{fn}
F_n^*C_{n,k}^{(v)}=\sum_{\bar{\epsilon}}C^{2k}_{\bar{\epsilon}},
\end{equation}
in particular $C_{n,k}^{(v)}$ is reduced by Lemma \ref{jeloucito}.

Since $k$ is odd, we see that for all $\bar{\epsilon}\in G_n$ we have 
\begin{equation}\label{fg}
\bar{\epsilon}\cdot C_{(1,\ldots,1)}^{2k}=C_{\bar{\epsilon}}^{2k}
\end{equation}
so $C_{n,k}^{(v)}=F_n(C_{\bar{\epsilon}}^{2k})$ for all $\bar{\epsilon}\in G_n$, hence $C_{n,k}^{(v)}$ is irreducible.

Finally, all the curves $C_{\bar{\epsilon}}\in G_n$ are isomorphic by Equation \eqref{fg}, and the degree of $F_{n|C_{\bar{\epsilon}}^{2k}}:C_{\bar{\epsilon}}^{2k}\to C_{n,k}^{(v)}$ is one, because  $\mathrm{deg}(F_n)=2^{n-2}$, which is exactly the number of curves in the right hand side of Equation \eqref{fn}.
\end{proof}

We finally obtain

\begin{lemma}\label{sdirred2}
%The pullbacks under $\rho_{n,k}$ of curves of type (i) with $\alpha\neq 4,\ldots,n$, of type (ii) and of type (iii) are irreducible and reduced. The pullback of curves of type (i) with $\alpha=4,\ldots,n$ is $\rho_{n,k}^*(C_\alpha)=kR_\alpha$, where $R_\alpha$ are irreducible and reduced. 
If $k$ is even, then the pullbacks of the curves of type (iv) is the union of the curves $C^k_{\bar{\epsilon}}$ with $\bar{\epsilon}\in G$ which are irreducible and reduced. If $k$ is odd, the pullback of the curve of type (v) is irreducible and reduced. 
%5Let $C$ be a curve of type (i), (ii) or (iii) in Lemma \ref{sdomega}. Then $(\rho_{n,k}^*(C))_{\mathrm{red}}$ is smooth and irreducible for every $n\geq 4$.
\end{lemma}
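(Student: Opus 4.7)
The plan is to read off the lemma from the work already done earlier in this section, since both cases now reduce to routine bookkeeping using Lemmas \ref{notiene}, \ref{tipo4} and \ref{jeloucito}.

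For the even case, I would first observe that the four type-(iv) curves of Proposition \ref{sdomega}, namely $x_1^{k/2}\pm 2x_2^{k/2}\pm x_3^{k/2}=0$, are (after a harmless overall sign change) exactly the curves $C^{k}_{\epsilon_2,\epsilon_3}\subseteq X_{3,k}$ as $(\epsilon_2,\epsilon_3)$ ranges over $\{\pm 1\}^2$. Lemma \ref{tipo4} then gives
\[
\rho_{n,k}^{*}C^{k}_{\epsilon_2,\epsilon_3} \;=\; \sum_{\epsilon_4,\ldots,\epsilon_n\in\{\pm 1\}} C^{k}_{\epsilon_2,\ldots,\epsilon_n},
\]
so that summing over the four choices of $(\epsilon_2,\epsilon_3)$ exhausts $G_n=\{\pm 1\}^{n-1}$ and produces the total divisor $\sum_{\bar\epsilon\in G_n}C^{k}_{\bar\epsilon}$. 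Each coefficient equals one, giving reducedness; each $C^{k}_{\bar\epsilon}$ is smooth and irreducible by Lemma \ref{notiene}, and distinct $\bar\epsilon$'s yield distinct curves by the last sentence of that same lemma. This is exactly the claim.

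For the odd case, by definition $C^{(v)}_{n,k}=\rho_{n,k}^{*}C^{(v)}_{3,k}$, and Lemma \ref{jeloucito} already proves that $C^{(v)}_{n,k}\subseteq X_{n,k}$ is both reduced and irreducible (reducedness being extracted from the identity $F_n^{*}C^{(v)}_{n,k}=\sum_{\bar\epsilon\in G_n}C^{2k}_{\bar\epsilon}$, which has each component appearing with multiplicity one, combined with the fact that $F_n$ is étale-Galois with group $G_n$; irreducibility from the surjection $C^{2k}_{\bar\epsilon}\twoheadrightarrow C^{(v)}_{n,k}$ produced in the same proof). So nothing remains to be proved here.

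The only point that requires minor care is the multiplicity-one claim in the even case, i.e.\ that the sum in Lemma \ref{tipo4} really appears with coefficient one rather than being a thickening of the underlying reduced curve. This can be double-checked by a degree count: $\rho_{n,k}$ has degree $k^{n-3}$, while across the four type-(iv) sources there are $4\cdot 2^{n-3}=2^{n-1}$ components $C^{k}_{\bar\epsilon}$, each mapping to its image with degree $(k/2)^{n-2}$ via the chain of finite maps $\pi_j|_{C^{k}_{\epsilon_2,\ldots,\epsilon_j}}$ computed in the proof of Lemma \ref{tipo4}; the product $2^{n-1}\cdot (k/2)^{n-2}=4\cdot k^{n-3}$ matches $4\deg\rho_{n,k}$, confirming that no further multiplicity or extra component is hidden. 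There is no substantial obstacle beyond this verification.
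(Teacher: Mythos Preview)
Your proof is correct and follows essentially the same approach as the paper: both cases are deduced directly from Lemmas \ref{notiene}, \ref{tipo4} and \ref{jeloucito}, with your version simply spelling out more of the bookkeeping that the paper leaves implicit. Two harmless slips worth noting: in your final degree count the restriction $\rho_{n,k}|_{C^k_{\bar\epsilon}}$ has degree $(k/2)^{n-3}$ rather than $(k/2)^{n-2}$ (there are $n-3$ maps $\pi_4,\ldots,\pi_n$), and $F_n$ is Galois but not \'etale (it ramifies along the coordinate hyperplanes)---neither affects the argument, since the degree check is redundant once Lemma \ref{tipo4} is in hand, and reducedness of $C^{(v)}_{n,k}$ follows from $F_n^{*}C^{(v)}_{n,k}=\sum_{\bar\epsilon}C^{2k}_{\bar\epsilon}$ by the elementary observation that pullback by any finite surjective morphism preserves multiplicities.
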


\begin{proof} 
%The pullbacks of the curves of type (i) with $\alpha\neq 4,\ldots n$, of type (ii) and of type (iii) are smooth and complete intersections by Lemma \ref{sdrediii} and Lemma \ref{sdredi}. %From the proofs of these Lemmas, we know that the pullbacks $\rho_{n,k}^*(C_\alpha)$ of curves of type (i) (with $\alpha\neq 4,\ldots,n$), (ii) and (iii) are smooth complete intersections. 
%From \cite{H}, Ex.II.8.4(c) we have that these curves are connected, and therefore irreducible.
%
%By Lemma \ref{lacosa} we have that $$\rho_{n,k}^*C_i=(\pi_{i+1}\circ \cdots\circ \pi_n)^*\rho_{i,k}^*C_i=k(\pi_{i+1}\circ\cdots\circ \pi_n)^*R_i$$ when $4\leq i\leq n$. Since the pullbacks $(\pi_{i+1}\circ\cdots\circ\pi_n)^*R_i$ are smooth and complete intersections by Lemma \ref{sdred2}, we obtain that they are irreducible by \cite{H}, Ex.II.8.4(c). Thus $(\rho_{n,k}^*(C_i))_{\mathrm{red}}=(\pi_{i+1}\circ\cdots\circ\pi_n)^*R_i$ is smooth and irreducible.
%
From Lemma \ref{tipo4}, we know that the curves of type (iv) are irreducible and reduced. By Lemma \ref{jeloucito}, we have then that the curve of type (v) is irreducible and reduced.
\end{proof}

\section{The genus of integral curves on $X_{n,k}$}

Recall that $\rho_{n,k}=\pi_4\circ\cdots\circ\pi_n$. Using the notation $\rho_{n,k}^\bullet$ defined in Section \ref{explanation}, let
$$\omega_n=\rho_{n,k}^\bullet\omega\in H^0(X_{n,k},\mathcal{O}_{X_{n,k}}(2k+3)\otimes S^2\Omega^1_{X_{n,k}/\mathbb{C}}),$$
using the fact that $\rho_{n,k}^*\mathcal{O}_{\mathbb{P}^2}(1)=\mathcal{O}_{X_{n,k}}(1)$.

\begin{lemma}\label{sdomegaxn}
Let $k\geq 2$ and $n\geq 4$. The following curves on $X_{n,k}$ are irreducible and $\omega_n$-integral, and moreover, every $\omega_n$-integral curve is one of these curves: 
\begin{itemize}
\item[(a)] $\rho_{n,k}^*C_{\alpha}$, with $\alpha\in\mathbb{C}\backslash\left\{1,\cdots,n\right\}$. They have genus $\frac{1}{2}k^{n-2}(n(k-1)-2k)+1$. %$2^{n-3}(n-4)+1$;
\item[(a')] $(\pi_{i+1}\circ\cdots\circ\pi_{n})^*R_i=(\rho_{n,k}^*(C_i))_{\mathrm{red}}$, with $4\leq i \leq n$. They have genus $\frac{1}{2}k^{n-3}(n(k-1)-3k+1)+1$. %$2^{n-4}(n-5)+1$.
\item[(b)] $\rho_{n,k}^*C_{\infty}$. It has genus $\frac{1}{2}k^{n-2}(n(k-1)-2k)+1$. %$2^{n-3}(n-4)+1$.
\item[(c)] The pullbacks under $\rho_{n,k}$ of the coordinate axes $x_1=0$, $x_2=0$, $x_3=0$ of $X_{3,k}=\mathbb{P}^2$. They have genus $\frac{1}{2}k^{n-3}(n(k-1)-3k+1)+1$. %$2^{n-4}(n-5)+1$;
\item[(d)] For $k$ even, the irreducible components of the pullbacks of the curves $x_1^{k/2}\pm 2x_2^{k/2}\pm x_3^{k/2}=0$. They have genus $\frac{1}{2}\left(\frac{k}{2}\right)^{n-2}\left(\frac{nk}{2}-k-n\right)+1$. %These are the $2^{n-1}$ curves $[s+t:\pm(2s+t):\cdots :\pm(ns+t)]$ with $[s:t]\in\mathbb{P}^1$ and have genus $0$.
\item[(e)] For $k$ odd, $C^{(v)}_{\epsilon_2,\ldots,\epsilon_n}$. It has genus $\frac{1}{2}k^{n-2}(n(k-1)-2k)+1$.
\end{itemize}
\end{lemma}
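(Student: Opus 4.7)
The plan is to combine Theorem \ref{bullet} with the classification of $\omega$-integral curves on $\mathbb{P}^2$ obtained in the preceding section, and then to compute genera via adjunction on smooth complete intersections in $\mathbb{P}^{n-1}$.

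Since $\rho_{n,k}:X_{n,k}\to\mathbb{P}^2$ is a finite morphism of smooth surfaces, Theorem \ref{bullet} yields that an irreducible curve $D\subset X_{n,k}$ is $\omega_n$-integral if and only if $\rho_{n,k}(D)$ is $\omega$-integral. Hence the $\omega_n$-integral curves on $X_{n,k}$ are precisely the irreducible components of the pullback $\rho_{n,k}^{*}C$ as $C$ ranges over the five types (i)--(v) of Proposition \ref{sdomega}. The decomposition of each such pullback is already in hand: Lemma \ref{sdirred} treats types (i), (ii), (iii) (noting that for $\alpha\in\{4,\ldots,n\}$ the reduced pullback is the single irreducible curve $R_\alpha^{*}:=(\pi_{\alpha+1}\circ\cdots\circ\pi_n)^{*}R_\alpha$), Lemma \ref{tipo4} yields the $2^{n-3}$ distinct components $C^k_{\bar\epsilon}$ appearing in the pullback of a type (iv) curve when $k$ is even, and Lemma \ref{jeloucito} identifies the irreducible curve $C^{(v)}_{n,k}$ as the pullback of a type (v) curve when $k$ is odd. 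This accounts precisely for the curves listed in items (a)--(e) and their irreducibility.

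For the genera in cases (a), (a'), (b), (c), (d) I would apply the adjunction formula for smooth complete intersections: if $D\subset\mathbb{P}^{n-1}$ is the smooth transverse intersection of $n-2$ hypersurfaces of degrees $d_1,\ldots,d_{n-2}$, then $\deg D=\prod_i d_i$ and $\omega_D=\mathcal{O}(\sum_i d_i-n)|_D$, whence $2g(D)-2=\bigl(\prod_i d_i\bigr)\bigl(\sum_i d_i-n\bigr)$. The fact that each of our curves is such a smooth complete intersection is supplied exactly by Lemma \ref{sdrediii} (case (c)), Lemma \ref{sdredi} (cases (a) and (b)), Lemma \ref{sdred2} (case (a')), and Lemma \ref{notiene} (case (d)). For (a) and (b) the defining system consists of the $n-3$ equations of $X_{n,k}$ together with the degree-$k$ equation of $C_\alpha$ or $C_\infty$, giving $\prod d_i=k^{n-2}$ and $\sum d_i-n=(n-2)k-n$; for (a') and (c) one instead adjoins the linear equation $x_i=0$, giving $\prod d_i=k^{n-3}$ and $\sum d_i-n=(n-3)k+1-n$; for (d) the $n-2$ defining equations of $C^k_{\bar\epsilon}$ all have degree $k/2$, giving $\prod d_i=(k/2)^{n-2}$ and $\sum d_i-n=(n-2)k/2-n$. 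Direct substitution into the adjunction formula recovers each claimed genus. For case (e), the geometric genus is a birational invariant, so Lemma \ref{jeloucito} reduces the computation to the smooth curve $C^{2k}_{\bar\epsilon}\subset X_{n,2k}$, whose genus is given by case (d) applied with $k$ replaced by $2k$, yielding $\tfrac{1}{2}k^{n-2}(n(k-1)-2k)+1$ after simplification.

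I expect the only delicate point to be careful bookkeeping: distinguishing the non-reduced scheme-theoretic pullback $\rho_{n,k}^{*}C_\alpha=kR_\alpha^{*}$ (when $\alpha\in\{4,\ldots,n\}$) from the reduced irreducible curve $R_\alpha^{*}$ that actually appears on the list, and verifying for each case that the listed equations cut out the reduced irreducible curve rather than a larger subscheme. Both issues are already settled by the preceding lemmas, so once these are invoked correctly the adjunction computation is entirely mechanical.
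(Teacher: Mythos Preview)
Your proposal is correct and follows essentially the same route as the paper: invoke Theorem \ref{bullet} to reduce to the classification of $\omega$-integral curves on $\mathbb{P}^2$, use Lemmas \ref{sdirred}, \ref{tipo4}, \ref{jeloucito} to identify the irreducible components of each pullback, and compute genera via the adjunction formula for smooth complete intersections (with case (e) handled by the birational equivalence of Lemma \ref{jeloucito}). Your write-up is in fact slightly more explicit than the paper's in matching each case to the lemma that supplies smoothness.
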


\begin{proof} Let $C\subset X_{n,k}$ be an irreducible curve. By Theorem \ref{bullet}, the curve $C$ is $\omega_n$-integral if and only if its image $D=\rho_{n,k}(C)$ is $\omega$-integral, that is, $D$ is one of the curves of type (i)-(v) of Proposition \ref{sdomega}. From this we know that $C$ is an irreducible component of $\rho_{n,k}^*D$, hence $C$ is one of the curves of type (a), (a'), (b), (c), (d) or (e), by Lemmas \ref{sdirred} and \ref{sdirred2}.

Now we compute the genus of these curves.
We have from Ex.IV.3.3.2 in \cite{H} that $\deg_Ci^*\mathcal{O}_{\mathbb{P}^n}(1)=\deg(C)$, which is equal to $\prod_{i=1}^{n-2}d_i$ with $d_i$ the degrees of the equations defining $C$, by \cite{EH} Theorem III-71. From \cite{H}, Ex.II.8.4.(e) we have that for curves of type (a), (a'), (b), (c) and (d) the canonical sheaf is $K_C=\mathcal{O}(\sum_{i=1}^{n-2}{d_i}-(n-1)-1)$. Therefore the geometric genus of $C$ in the cases (a), (b), (c) and (d) is $$g(C)=\frac{1}{2}\left(\prod_{i=1}^{n-2}d_i\right)\left(\sum_{i=1}^{n-2}d_i-n\right)+1$$ because $C$ is smooth in these cases. %From this we can compute the genus of curves of type (a), (a'), (b), (c) and (d).
From Lemma \ref{jeloucito}, we know the genus of curves of type (e), because $C^{(v)}_{\epsilon_2,\ldots,\epsilon_n}$ is birational to a curve $C^{2k}_{\epsilon_2,\ldots,\epsilon_n}$.
%If $D$ is of type (iv), then by Lemma \ref{tipo4} the components of $D$ are the curves of type (d), which are smooth and irreducible of genus $0$ by Lemma \ref{tipo4}. % by Riemann-Hurwitz. 
%
%Now suppose that $D$ is of type (i), (ii) or (iii). Then $(\rho_{n,k}^*(D))_{\mathrm{red}}$ is irreducible by Lemma \ref{sdirred}.
%
%If $D$ is a curve of type (i), then $C=(\rho_{n,k}^*D)_{\mathrm{red}}=\rho_{n,k}^*D$ is of type (a). From Proposition \ref{sdredi} we know the equations defining $C$, thus by Ex.II.8.4(e) in \cite{H}, we have $K_C=\mathcal{O}(2(n-2)-n)=\mathcal{O}(n-4)$, and thus the genus of $C$ is $2^{n-3}(n-4)+1$. Similarly, if $C$ is a curve of type (a'), we have $K_C=\mathcal{O}(n-5)$ by Proposition \ref{sdred2}, hence the genus of $C$ is $2^{n-4}(n-5)+1$.
%If $C$ is the curve of type (b), then by Proposition \ref{sdredi} we have $K_C=\mathcal{O}(n-4)$ and its genus is $2^{n-3}(n-4)+1$.
%If $C$ is a curve of type (c), then by Proposition \ref{sdrediii} $K_C=\mathcal{O}(n-5)$ and $g(C)=2^{n-4}(n-5)+1$.
\end{proof}

\section{Curves of low geometric genus on $X_{n,k}$}

Now we will prove that all the curves in $X_{n,k}$, whose geometric genus is bounded by a certain explicit constant (depending on $n$ and $k$) must be $\omega_n$-integral, and hence they are of type (a), (a'), (b), (c), (d) or (e).

\begin{lemma}\label{losprima}
For each $n\geq 4$, the section $\omega_n\in H^0(X_{n,k},\mathcal{O}_{X_{n,k}}(2k+3)\otimes S^2\Omega^1_{X_{n,k}})$ determines a unique section $\omega_n'\in H^0(X_{n,k},\mathcal{O}_{X_{n,k}}(n(1-k)+5k)\otimes S^2\Omega^1_{X_{n,k}})$. Moreover, all $\omega_n'$-integral curves are $\omega_n$-integral curves.
\end{lemma}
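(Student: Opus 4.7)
The strategy is to combine Theorem \ref{elteo1} (which describes vanishing of $\omega_n$ along ramification divisors of $\rho_{n,k}$) with Proposition \ref{laprop2} (which produces a section in a twisted sheaf from such vanishing), and then compute the resulting line bundle.

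First, I would identify the ramification of $\rho_{n,k} = \pi_4\circ\cdots\circ\pi_n$ on $X_{n,k}$. For each $i=4,\ldots,n$, set $\tilde R_i := (\pi_{i+1}\circ\cdots\circ\pi_n)^* R_i$, so that $\rho_{n,k}(\tilde R_i) = C_i$. I would verify that the ramification index of $\rho_{n,k}$ along $\tilde R_i$ is exactly $k$: only $\pi_i$ contributes, being totally ramified of order $k$ along $R_i$ by Proposition \ref{rir}, while each other $\pi_j$ is étale near the image of $\tilde R_i$ in $X_{j-1,k}$ because the ramification divisor $R_j$ is cut out by $x_j=0$, a coordinate distinct from $x_i$. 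Since each $C_i$ is $\omega$-integral by Proposition \ref{sdomega}(i), Theorem \ref{elteo1} applies and shows that $\omega_n = \rho_{n,k}^\bullet\omega$ vanishes identically along $(k-1)\tilde R_i$ for every $i=4,\ldots,n$.

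Because the divisors $\tilde R_i$ are distinct irreducible curves (by Lemma \ref{sdred2} combined with Lemma \ref{sdirred}), they have pairwise distinct support, so vanishing along each $(k-1)\tilde R_i$ upgrades to vanishing along $D := (k-1)\sum_{i=4}^n \tilde R_i$. Proposition \ref{laprop2} then furnishes a section $\omega_n' \in H^0(X_{n,k}, \mathcal{O}(-D)\otimes\mathcal{O}_{X_{n,k}}(2k+3)\otimes S^2\Omega^1_{X_{n,k}/\C})$ whose $\omega_n'$-integral curves are among the $\omega_n$-integral curves. Uniqueness follows because the injection $\mathcal{O}(-D)\hookrightarrow \mathcal{O}_{X_{n,k}}$ induces an injection on global sections over the integral surface $X_{n,k}$, and $\omega_n'$ is characterized as the preimage of $\omega_n$.

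The last step is the identification $\mathcal{O}(-D)\otimes\mathcal{O}(2k+3)\cong\mathcal{O}(n(1-k)+5k)$. The key remark is that $R_i=\mathrm{div}_{X_{i,k}}(x_i)$ is the zero locus of a linear form on $X_{i,k}\subset\mathbb{P}^{i-1}$, so $\mathcal{O}_{X_{i,k}}(R_i)\cong\mathcal{O}_{X_{i,k}}(1)$; since each $\pi_j$ is induced by a linear projection and therefore satisfies $\pi_j^*\mathcal{O}(1)\cong\mathcal{O}(1)$, one gets $\mathcal{O}_{X_{n,k}}(\tilde R_i)\cong\mathcal{O}_{X_{n,k}}(1)$. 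Summing over the $n-3$ indices $i=4,\ldots,n$ gives $\mathcal{O}(D)\cong\mathcal{O}((n-3)(k-1))$, and the arithmetic identity $2k+3-(n-3)(k-1)=n(1-k)+5k$ finishes the computation. The main obstacle is the ramification bookkeeping in the first step: one must carefully rule out additional ramification of $\rho_{n,k}$ at $\tilde R_i$ coming from the other $\pi_j$, which boils down to the transparent check that $\tilde R_i$ is set-theoretically disjoint from the preimage of $R_j$ for $j\neq i$.
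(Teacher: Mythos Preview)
Your argument is correct and follows essentially the same route as the paper: use Theorem \ref{elteo1} on each $\tilde R_i$ (with ramification index $k$ coming from $\pi_i$ alone) to get vanishing of $\omega_n$ along $(k-1)\sum_{i=4}^n\tilde R_i$, apply Proposition \ref{laprop2}, and identify $\mathcal{O}(\tilde R_i)\cong\mathcal{O}_{X_{n,k}}(1)$ to land in $\mathcal{O}(2k+3-(k-1)(n-3))=\mathcal{O}(n(1-k)+5k)$. One small slip in your closing remark: the $\tilde R_i$ are \emph{not} set-theoretically disjoint from the preimages of $R_j$ for $j\neq i$ (Lemma \ref{sdnozero} only rules out three simultaneous vanishings, not two), but this is harmless---what you actually need, and what your earlier sentence already states correctly, is that $\pi_j$ is unramified at the \emph{generic} point of the image of $\tilde R_i$, i.e.\ that $x_j$ does not vanish identically on $\{x_i=0\}\cap X_{j,k}$.
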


\begin{proof} Let $i=3,\ldots, n$, and let $R_{n,i}=(\pi_{i+1}\circ\cdots\circ\pi_n)^*R_i$.
By Lemma \ref{lacosa}, we have that $\rho_{i,k}(C_i)=kR_i$. Since the curves $C_i$ with $4\leq i\leq n$ are $\omega$-integral, we have by Proposition \ref{elteo1} that $\omega_n$ vanishes identically along $(k-1)\sum_{i=4}^n R_{n,i}$. Since $R_{n,i}$ is the intersection of $X_{n,k}$ and $\left\{x_n=0\right\}$, it is a hyperplane section in $X_{n,k}\subseteq\mathbb{P}^{n-1}$, and hence its ideal sheaf is $\mathcal{O}(-1)$. Thus by Proposition \ref{laprop2}, we get that for each $n$, the section $\omega_n\in H^0(X_{n,k},\mathcal{O}_{X_{n,k}}(2k+3)\otimes S^2\Omega^1_{X_{n,k}})$ determines a unique section $\omega_n'\in H^0(X_{n,k},\mathcal{O}_{X_{n,k}}(2k+3-(k-1)(n-3))\otimes S^2\Omega^1_{X_{n,k}})$ which makes $\omega_n'$-integral curves to be $\omega_n$-integral.
\end{proof}
%For $4\leq i\leq n$, the curves $C_i$ are $\omega$-integral and ramified with respect to $\rho_{n,k}$ by Proposition \ref{sdomegaxn}. The section $\omega_n$ vanishes along $(\pi_{i+1}\circ\cdots\circ\pi_{n})^*R_i=(\rho_{n,k}^*(C_i))_{\mathrm{red}}$ by Proposition \ref{lemavojta} (which is irreducible by Lemma \ref{sdirred}).
%
%Since $(\pi_{i+1}\circ\cdots\circ\pi_n)^*R_i$ is the intersection of $X_{n,k}$ and $\left\{x_n=0\right\}$, it is a hyperplane section in $X_{n,k}\subseteq\mathbb{P}^{n-1}$, and hence its ideal sheaf is $\mathcal{O}(-1)$.
%
%Looking at stalks, we get that $\omega_n$ vanishes along $\sum_{i=3}^n(\pi_{i+1}\circ\cdots\circ\pi_n)^*R_i$. By Proposition \ref{nuevoomega}, there exists a section $\omega_n'\in H^0(X_{n,k},\mathcal{O}(10-n)\otimes S^2\Omega^1_{X_{n,k}})$ such that the $\omega_n'$-integral curves are $\omega_n$-integral curves. 
%\end{proof}

\begin{lemma}\label{sdin}
Let $n>\frac{4\max\{g,1\}+1}{k-1}+5$, and let $C\subseteq X_{n,k}$ be an irreducible curve of genus $g$. Then $C$ is $\omega_n$-integral, and hence is one of the curves of type (a), (a'), (b), (c) or (d).
\end{lemma}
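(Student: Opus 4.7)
The plan is to apply Lemma \ref{losprima} together with a line-bundle degree computation on the normalization, in the same spirit as the outline of Section \ref{explanation}. Let $\varphi_C:\tilde{C}\to X_{n,k}$ be the normalization of $C$, and set $N:=n(1-k)+5k=-n(k-1)+5k$, so that the refined section from Lemma \ref{losprima} sits in
\[
\omega_n'\in H^0\bigl(X_{n,k},\,\Oc_{X_{n,k}}(N)\otimes S^2\Omega^1_{X_{n,k}/\C}\bigr).
\]
It suffices to show that $C$ is $\omega_n'$-integral: indeed, by Lemma \ref{losprima} this would force $C$ to be $\omega_n$-integral, and then Lemma \ref{sdomegaxn} identifies $C$ as being of type (a), (a'), (b), (c), (d), or (e).

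To establish $\omega_n'$-integrality, by Definition \ref{omegaint} we must check that the pullback
\[
\varphi_C^\bullet\omega_n'\in H^0\bigl(\tilde{C},\,\varphi_C^*\Oc_{X_{n,k}}(N)\otimes S^2\Omega^1_{\tilde{C}/\C}\bigr)
\]
vanishes. Since $\tilde{C}$ is a smooth projective curve of genus $g$, the sheaf $S^2\Omega^1_{\tilde{C}/\C}=(\Omega^1_{\tilde C/\C})^{\otimes 2}$ is a line bundle of degree $4g-4$; moreover, setting $d:=\deg_{\tilde{C}}\varphi_C^*\Oc(1)$ (the projective degree of $C\subseteq\mathbb{P}^{n-1}$), we have $d\geq 1$. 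Hence the target is the space of sections of a line bundle of degree $N\cdot d+4g-4$, and any strict negativity of this quantity will force the section to vanish identically.

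It remains to verify that the numerical hypothesis $n>\frac{4\max\{g,1\}+1}{k-1}+5$ delivers this negativity uniformly in $d\geq 1$. If $g\geq 1$, then rewriting the hypothesis as $n(k-1)>4g+5k-4$ gives $N<4-4g\leq 0$; combined with $d\geq 1$ this yields
\[
N\,d+4g-4\;\leq\;N+4g-4\;<\;(4-4g)+(4g-4)\;=\;0.
\]
If $g=0$, the hypothesis specializes to $n>\frac{5k}{k-1}$, i.e.\ $N<0$, and so $N\,d+4g-4\leq N-4<0$. In either case $\varphi_C^\bullet\omega_n'=0$, so $C$ is $\omega_n'$-integral, hence $\omega_n$-integral, hence on the list of Lemma \ref{sdomegaxn}.

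No step of this argument is difficult in itself; the content is concentrated in the preparation. The real work has already been done in Lemma \ref{losprima}, where the ramification along the divisors $R_{n,i}$ ($4\leq i\leq n$) was used to decrease the twist of $\Lcal$ from $2k+3$ down to $N=5k-n(k-1)$, which becomes arbitrarily negative as $n$ grows. The only point to watch is to use $d\geq 1$ in the direction that is compatible with the sign of $N$, which is exactly why the bound features $\max\{g,1\}$: for $g\geq 1$ the twist $N$ is automatically nonpositive under the hypothesis, and for $g=0$ one needs the hypothesis separately to enforce $N<0$.
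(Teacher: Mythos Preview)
Your proof is correct and follows essentially the same approach as the paper: pass from $\omega_n$ to $\omega'_n$ via Lemma \ref{losprima}, then show the relevant line bundle on $\tilde C$ has negative degree so that $\varphi_C^\bullet\omega_n'$ must vanish. Your treatment is in fact slightly more careful than the paper's, both in handling the $g=0$ and $g\geq 1$ cases separately (the paper just uses $n>\frac{5}{k-1}+5$ and a combined inequality) and in including type (e) among the possibilities, which the lemma's statement omits (types (d) and (e) occur for $k$ even and $k$ odd respectively, and both are needed for Theorem~\ref{main}).
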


\begin{proof} For $n>\frac{4\max\{g,1\}+1}{k-1}+5$, and an irreducible curve $C\subseteq X_{n,k}$ of genus $g$, let $\varphi_C:\tilde{C}\to X_{n,k}$ be the normalization of $C$. We will first prove that $C$ is $\omega'_n$-integral.
%We know from Example IV.3.3.2 in \cite{H} that $\deg_{\tilde{C}}\varphi_C^*\mathcal{O}(1)=\deg(C)>0$. Thus, since $n>\frac{5}{k-1}+5$, we have that %H is ample, C effective, Nakai Moishezon.
Since $n>\frac{5}{k-1}+5$, we obtain

\begin{eqnarray*}
\deg_{\tilde{C}}(\varphi_C^*\mathcal{O}(n(1-k)+5k)\otimes S^2\Omega^1_{\tilde{C}/\mathbb{C}})&=&\deg_{\tilde{C}}(\varphi_C^*\mathcal{O}(n(1-k)+5k))+2(2g-2)\cr
%&=&(n(1-k)+5k)\deg_{\tilde{C}}\varphi_C^*\mathcal{O}(1)+4g-4\cr
&=&(n(1-k)+5k)\deg(C)+4g-4\cr
&<&n(1-k)+5k+4g-4<0
\end{eqnarray*}
Hence $H^0(\tilde{C},\varphi_C^*\mathcal{O}(n(1-k)+5k)\otimes S^r\Omega^1_{\tilde{C}/\mathbb{C}})=0$, and so $\varphi_C^*\omega'_n=0$. From this, the curve $C$ must be $\omega_n'$-integral. By Lemma \ref{losprima}, we get that $C$ is $\omega_n$-integral.
The last statement holds by Lemma \ref{sdomegaxn}.
\end{proof}

\section{Proof of the main results}\label{sdproofmain}

\begin{proof}[Proof of Theorem \ref{main}]
From Lemma \ref{sdin} we have that all the curves of genus $g$ in $X_{n,k}$ are $\omega_n'$-integral. Hence by Lemma \ref{losprima}, these curves are $\omega_n$-integral.

Note that when $n>\frac{4g+1}{k-1}+4$ (with $g\geq 1$) we have
\begin{eqnarray*}
\frac{1}{2}k^{n-2}(n(k-1)-2k)+1&>&\frac{k^{n-2}}{2}\left(\left(\frac{4g+1}{k-1}+4\right)(k-1)-2k\right)+1\cr
&=&\frac{k^{n-2}}{2}(4g+2k-4)+1\geq 4g+4-4+1\geq g.
\end{eqnarray*}
hence curves of type (a) and (b) have genus greater than $g$.

We also have
\begin{eqnarray*}
\frac{1}{2}k^{n-3}(n(k-1)-3k+1)+1&>&\frac{k^{n-2}}{2}\left(\left(\frac{4g+1}{k-1}+4\right)(k-1)-3k+1\right)+1\cr
&=&\frac{k^{n-2}}{2}(4g+k-2)+1\geq g.
\end{eqnarray*}
So curves of type (a') and (c) have genus greater than $g$.

If $k=2$, we have
\begin{eqnarray*}
\frac{1}{2}\left(\frac{k}{2}\right)^{n-2}\left(\frac{nk}{2}-k-n\right)+1&=&\frac{1}{2}(n-2-n)+1=0.
\end{eqnarray*}
Thus when $k=2$, the curves of type (d) have genus $0$.

However, if $k\geq 3$
\begin{eqnarray*}
\frac{1}{2}\left(\frac{k}{2}\right)^{n-2}\left(\frac{nk}{2}-k-n\right)+1&\geq &\frac{1}{2}\left(\frac{k}{2}\right)^{n-2}\left(n\left(1+\frac{k}{2}\cdot\frac{k-2}{k}\right)-k-n \right)+1\cr
&=&\frac{1}{2}\left(\frac{k}{2}\right)^{n-2}\left(\frac{nk}{2}\cdot\frac{k-2}{k}-k\right)+1\cr
&>&\frac{1}{2}\left(\frac{k}{2}\right)^{n-2}\left(\frac{\left(\frac{4g+1}{k-1}+4\right)k}{2}\cdot\frac{k-2}{k}-k\right)+1\cr
&=&\frac{1}{2}\left(\frac{k}{2}\right)^{n-2}\left( \frac{4g+1}{2}\cdot\frac{k-2}{k-1}+k-4\right)+1\cr
&\geq & \frac{27}{16}\left(\frac{4g+1}{2}\cdot \frac{1}{2}+3-4\right)+1\cr
&=&\frac{27}{16}\left(g-\frac{3}{4}\right)+1>g
%&=&\left(\frac{k}{2}\right)^{n-3}\frac{k}{k-1}\frac{4g+1}{4}+1\geq g
\end{eqnarray*}
%\begin{eqnarray*}
%\frac{1}{2}\left(\frac{k}{2}\right)^{n-2}\left(\frac{nk}{2}-k-n\right)+1&\geq &%\frac{1}{2}\left(\frac{k}{2}\right)^{n-2}\left(3\left(\frac{n}{2}\right)-1-n\right)+1\cr
%%&=&
%\frac{1}{2}\left(\frac{k}{2}\right)^{n-2}\left(\frac{n}{2}-3\right)+1\cr
%&>&\frac{1}{2}\left(\frac{k}{2}\right)^{n-2}\left(\frac{4g+1}{k-1}+4-4\right)+1\cr
%&=&\left(\frac{k}{2}\right)^{n-3}\frac{k}{k-1}\frac{4g+1}{4}+1\geq g
%\end{eqnarray*}
so curves of type (d) when $k\geq 3$ have genus greater than $g$. From this calculation, we also see that the curve of type (e) for $k\geq 3$ odd has genus greater than $g$.

%Since $g<\frac{n-6}{4}$ and $n>9$, we know by Lemma \ref{sdomegaxn} that the only $\omega_n$-integral curves of genus less than or equal to $g$ are the curves of type (d).
From this we obtain that if $k=2$, then the only curves of genus less than or equal to $g$ in $X_{n,k}$ are the curves of type (d), and if $k\geq 3$, then there are no curves of genus less than or equal to $g$ on $X_{n,k}$.
\end{proof}

\begin{proof}[Proof of Theorem \ref{main2}] Let $C$ be a curve of genus $0$ or $1$. If $k=2$, then by Theorem \ref{main}, we have that for $n\geq 11$ all the curves of genus $0$ or $1$ are the curves of type (d). If $k\geq 3$, then by Theorem \ref{main} we have that for $n\geq \frac{5}{k-1}+5$ there are no curves of genus $0$ or $1$ on  $X_{n,k}$.
\end{proof}

\begin{proof}[Proof of Theorem \ref{hyper}]
Similar to Theorem 6.1 in \cite{V}.
\end{proof}

\begin{proof}[Proof of Theorem \ref{sdffield}]  Let $k\geq 2$ and let $n>\frac{4g+1}{k-1}+4$. Let $K$ be a function field of genus $g$, and let $C_K$ be a curve with function field $K$. The solutions over $K$ (up to scaling by elements of $K$) of the system of equations \eqref{xnequation} are in one to one correspondence with the morphisms $\left\{f:C_K\to X/\mathbb{C}\right\}$, in such a way that constant morphisms are in correspondence with sequences of proportional to a sequence of complex numbers.
By Riemann-Hurwitz, these morphisms are either constant, or must map the curve $C_K$ to curves in $X$ with genus less than or equal to $g$. By Theorem \ref{main}, the only curves with genus less than or equal to $g$ are the curves of type (d), and only in the case that $k=2$. The result follows from Theorem \ref{main}
%Hence for $k\geq 3$, the sequence $(f_1,\ldots,f_n)$ must be proportional to a sequence of complex numbers.
%If $k=2$, then each non-constant map $f:C_K\to X$ must have image contained in one of the curves of type (d), which means that the corresponding solution $(f_1:\ldots:f_n)$ to the previous system is not proportional to a constant solution, and it corresponds to a $K$-rational point in a curve of type (d). Hence it is of the form $$[f_1:\ldots:f_n]=[\pm(s+t):\ldots:\pm(ns+t)].$$
%Solving for $s$ and $t$ we see that they are in $K$, so we get $a,b\in K$ satisfying $$[f_1:\ldots:f_n]=[\pm(a+b):\ldots:\pm(na+b)].$$ 
\end{proof}

Now we will prove our arithmetic results. For the convenience of the reader, we state the Bombieri-Lang conjecture:

\begin{conjecture}\label{bombieri}
If $X$ is a smooth projective algebraic variety of general type defined over a number field $K$, then there exists a proper Zariski-closed subset $Z$ of $X$ such that the set $X(K)\setminus Z(K)$ is finite.
\end{conjecture}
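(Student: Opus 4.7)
The statement to be established is the Bombieri--Lang conjecture itself, and since this is one of the deepest open problems in Diophantine geometry, any realistic proposal can only gesture at the partial strategies that have produced the known special cases. The approach I would follow is Vojta's, which aims to extract from the geometric positivity of the canonical bundle an arithmetic height inequality of the form
\[
h_{K_X}(P) \leq (1+\varepsilon)\, h_A(P) + O(1)
\]
valid for all $K$-rational points $P$ outside a proper Zariski-closed exceptional locus $Z\subseteq X$, where $h_A$ is a height attached to some ample divisor. Granted such an inequality, bigness of $K_X$ (which follows from the general-type hypothesis) would force $h_A$ itself to be bounded on $X(K)\setminus Z(K)$, and Northcott's theorem would then yield the desired finiteness.

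First, I would attempt to mimic the Faltings setup. In Faltings' theorem on subvarieties of abelian varieties, one embeds $X$ into an ambient abelian variety $A/K$, passes to a high symmetric or fibre power of $X$, and uses an Arakelov-theoretic \emph{product lemma} together with a Thue--Vojta diagonal argument to convert the positivity of the N\'eron--Tate height on $A$ into the required inequality on $X$. In the absence of a direct embedding, the natural next step would be to push $X$ into its Albanese and reduce to a subvariety of an abelian variety. For the surfaces $X_{n,k}$ of this paper, however, this reduction is unlikely to yield anything, since the Albanese of a smooth complete intersection in $\mathbb{P}^{n-1}$ of positive dimension is typically trivial (by Lefschetz-type theorems), so the image of $X_{n,k}$ in its Albanese will not capture enough of the arithmetic to propagate a Mordell--Faltings-style finiteness.

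The hard part will be to produce the Vojta inequality \emph{without} any abelian-variety crutch, directly from $K_X$ being big. At present no method is known to accomplish this for a general surface of general type; existing unconditional approaches require either strong positivity of symmetric differentials (enough global sections of $S^m\Omega^1_X$ separating rational points, in the spirit of Bogomolov--Miyaoka--Yau type inputs over number fields) or a conjectural hypothesis such as Vojta's main conjecture or the $abc$-conjecture. In short, the main obstacle is structural: the positivity of the canonical bundle is a purely geometric statement, while the conclusion one needs is arithmetic, and outside the abelian setting there is no known mechanism that bridges the two systematically. This is precisely why the author invokes Bombieri--Lang only as a hypothesis, deducing the arithmetic consequences (Theorem~\ref{miain}, Theorem~\ref{hey}) from the geometric study of curves of low genus on $X_{n,k}$ carried out in the previous sections, rather than attempting to prove the conjecture itself.
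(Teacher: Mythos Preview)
The statement you were asked to prove is labeled \emph{Conjecture} in the paper, and the paper does not prove it: it is the Bombieri--Lang conjecture, stated only so that it can be invoked as a hypothesis in Theorems~\ref{miain}, \ref{hey}, and \ref{moha}. There is therefore no ``paper's own proof'' to compare against. Your proposal correctly recognizes this and, rather than pretending to a proof, gives an honest sketch of the Vojta height-inequality strategy, explains why the Albanese reduction is unlikely to help for the complete-intersection surfaces $X_{n,k}$, and identifies the structural obstacle (no known mechanism to convert bigness of $K_X$ into arithmetic finiteness outside the abelian setting). That is the appropriate response here; there is nothing to correct.
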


See \cite{nog} for Bombieri's version in the case of surfaces, and see \cite{lang} for Lang's version in all dimensions (which claims additional uniformity on the number field). In the case of surfaces, the set $Z$ can only be a finite union of curves of genus $0$ or $1$, by Faltings' Theorem (cf.\ \cite{Faltings}).

\begin{proof}[Proof of Theorem \ref{miain}] Let $n\geq 11$ and let $a_n,\ldots,a_1$ be a sequence of $n$ rational numbers whose squares have constant second differences. %, but which is not an arithmetic progression (up to signs), 
Hence for all $4\leq i\leq n$, it satisfies $a_{i-3}^2-3a_{i-2}^2+3a_{i-1}^2=a_i^2.$

If $k=2$ and the sequence $a_n,\ldots,a_1$ is not in arithmetic progression (up to signs), then there are no $s,t\in\mathbb{Z}$ such that $a_i^2=(si+t)^2$. Thus, the point $[a_1:\ldots:a_n]$ is in $X_{n,2}$, and it is not in one of the curves of type (d). By the Bombieri-Lang conjecture on $X_{n,2}$ and Theorem  \ref{main2} we can only have finitely many of these points.

To deduce an absolute bound (possibly larger than $11$) from the
finiteness, we follow an elementary combinatorial idea of Vojta
\cite{V} adapted to our case.

Suppose that there are, up to scaling, exactly $N$ sequences of
squares $x_1^2,\ldots,x_{12}^2$ with $x_i\in\mathbb{Q}$ having
constant second differences and such that the sequence is non-trivial
(i.e.\ the $x_i$ are not an arithmetic progression up to sign). We
claim that there is no nontrivial sequence of rational $M=N+12$
squares with constant second differences.

Indeed, suppose that $a_1^2,\ldots, a_M^2$ is such a non-trivial
sequence.  Then for all $1\le i\le N+1$ we have that
$a_{i}^2,\ldots,a_{i+11}^2$ also has constant second differences and
we claim that it is non-trivial. Suppose that it is trivial, then in
particular there are signs $\epsilon,\tau\in \{1,-1\}$ with
$a_i -2\epsilon a_{i+1}+\tau a_{i+2}=0$. Note that
$[a_i:a_{i+1}:a_{i+2}]\in X_{3,k}=\mathbb{P}^2$ lies on a curve of type (iv) and
$[a_i:\ldots: a_M ]\in X_{M-i+1}$, so it is in a curve of type (d), so
$a_i,a_{i+2},a_{i+3},\dots, a_M$ is an arithmetic progression up to
sign. Similarly $a_1,\ldots, a_i,a_{i+2},a_{i+3}$ is an arithmetic
progression up to sign (using the same signs for $a_1,a_2,a_3$), and
we get that the sequence $a_1,\ldots,a_M$ is trivial, a contradiction
with the fact that it is non-trivial.

Thus, we obtain non-trivial sequences $a_{i}^2,\ldots,a_{i+11}^2$  for
all $1\le i\le N+1$. We claim that they are non-proportional. Indeed,
there is a polynomial $f(t)=ut^2+vt+w$ such that $f(n)=a_n^2$ for all
$1\le n\le M$ because the $a_n$ have constant second differences, and
our sequence is non-degenerate (because it is non-trivial) so $f$ is non-constant. It is easy to
check that the function $F:\mathbb{A}^1\to \mathbb{P}^2$ defined by $t\mapsto
[f(t):f(t+1):f(t+3)]$ is injective, proving our claim.

Finally, this is a contradiction because there are at most $N$
non-proportional non-trivial sequences of length $12$ and we have
produced $N+1$ of them. This proves that there is no non-trivial
sequence of length $M=N+12$.
\end{proof}

\begin{proof}[Proof of Theorem \ref{hey}]
If $k\geq 3$, and $n>\frac{5}{k-1}+5$ then the point $[a_1:\ldots:a_n]$ is in $X_{n,k}$. By the Bombieri-Lang conjecture and Theorem \ref{main2} we can only have finitely many points of this form in $X_{n,k}$. The rest of the proof is the same as the proof of Theorem \ref{miain}.
\end{proof}

\begin{proof}[Proof of Theorem \ref{moha}]
Let $M\geq 3$ be a positive integer. Let $s=\left\{(x_j,y_j)\right\}_{j=1}^M$ be a $\mathbb{Q}$-rational $y$-arithmetic progression on the curve $y^2=x^k+b$ for some $b\in\mathbb{Q}^*$. Then $x_1,\ldots,x_M$ is a sequence of rational numbers whose $k$-th powers have constant second differences. Let $s'=\left\{(x'_j,y'_j)\right\}_{j=1}^M$ be another $\mathbb{Q}$-rational $y$-arithmetic progression on a curve $y^2=x^k+b'$, with $b'\in \mathbb{Q}^*$. 

We claim that there is $c\in \mathbb{Q}^*$ such that $x'_j=cx_j$ for each $1\le j\le M$ if and only if $s$ and $s'$ are equivalent. After this claim is proved, the result will follow from Theorem 10.

If $s$ and $s'$ are equivalent then, in particular, there is $\lambda\in \mathbb{Q}^*$ such that $x'_j=\lambda x_j$, so we can take $c=\lambda$. 

Conversely, suppose that $x'_j=cx_j$ for each $1\le j\le M$. Since $s$ and $s'$ are $y$-arithmetic progressions, there are $u,v,u',v'\in\mathbb{Q}$ with $v,v'\neq 0$ and such that for each $j$ we have $y_j=u+vj$ and $y'_j=u'+v'j$. Taking second differences of $x_j^k$ and ${x'_j}^k$ we get
$$
2v'^2={x'}^k_{j+2}-2{x'}^k_{j+1}+{x'}^k_{j} = \lambda^k(x_{j+2}^k-2x_{j+1}^k+x_{j}^k)=\lambda^k\cdot 2v^2
$$
and we see that $\lambda^k$ is a square, and there is $\mu\in \mathbb{Q}$ (of appropriate sign) which satisfies $\mu^2=\lambda^k$ and  $v'=\mu v$. Taking the first differences of $x_j^k$ and ${x'_j}^k$ we get
$$
2u'v' + (2j+1){v'}^2 = {x'}^k_{j+1}-{x'}^k_{j} = \mu^2 (  {x}^k_{j+1}-{x}^k_{j}) = \mu^2(2uv + (2j+1){v}^2)
$$
so we obtain $u'=\mu u$, and we get $y'_j = \mu y_j$. Therefore, $s$ and $s'$ are equivalent. 
\end{proof}

\begin{proof}[Proof of Theorem \ref{catorce} and Theorem \ref{diecisiete}]
The proof is similar to that of Theorem \ref{moha}, replacing $\mathbb{Q}$ by the function fields of a curve over $\mathbb{C}$ or $\mathbb{Q}$ respectively, and using Theorem \ref{sdffield} instead of Theorem \ref{hey}.
\end{proof}

\end{document}